\newtheorem{thm}{Theorem}[section]
\newtheorem{cor}[thm]{Corollary}
\newtheorem{lem}[thm]{Lemma}
\newtheorem{conj}[thm]{Conjecture}
\newtheorem{rmk}[thm]{Remark}
\newtheorem{lemma}[thm]{Lemma}
\newtheorem{definition}[thm]{Definition}
\newtheorem{example}[thm]{Example}
\newcommand{\afk}{\widetilde{FK}_n}
\newcommand{\atheta}{\tilde{\theta}}
\newcommand{\fk}{{FK_n}}
\newcommand{\ac}{\mathbb{A}_{\mathcal{C}}}
\newcommand{\at}{\mathbb{A}_{\mathcal{C}_{Fl_n}}}
\newcommand{\h}{{\bf h}}
\newcommand{\p}{{\bf p}}
\newcommand{\x}{{\bf x}}
\newcommand{\y}{{\bf y}}
\newcommand{\af}{\text{af}}
\newcommand{\s}{{\bf s}^{(k)}}
\newcommand{\f}{\tilde{F}}
\newcommand{\sn}{\widetilde{S}_n}
\newcommand{\fl}{\hat{Fl}}
\newcommand{\e}{\mathcal{E}}
\newcommand{\gr}{\hat{Gr}}
\newcommand{\asp}{\tilde{\mathfrak{S}}}
\newcommand{\homa}{\Hom_\mathbb{Q}(\mathbb{A},\mathbb{A})}
\newcommand{\homq}{\Hom_\mathbb{Q}(\mathbb{A},\mathbb{Q})}
\DeclareMathOperator{\supp}{\text{Supp}}
\DeclareMathOperator{\comp}{\text{ascomp}}
\DeclareMathOperator{\Hom}{\text{Hom}}
\renewenvironment{proof}{{\bf \emph {Proof.}}}{\qed}
\title[\resizebox{4.5in}{!}{Combinatorial description of the cohomology of the affine flag variety}]{Combinatorial description of the cohomology of the affine flag variety}
\author{Seung Jin Lee}
\address{Department of Mathematical Sciences, Seoul National University, GwanAkRo 1, Gwanak-Gu, Seoul 08826, Korea}      
\email{lsjin@snu.ac.kr}
\date{}
\begin{document}
\maketitle
\begin{abstract}
We define a polynomial representative of the Schubert class in the cohomology of an affine flag variety associated to $SL(n)$, called an affine Schubert polynomial. Affine Schubert polynomials are defined by using divided difference operators, generalizing those operators used to define Schubert polynomials, so that Schubert polynomials are special cases of affine Schubert polynomials. Also, affine Stanley symmetric functions can be obtained from affine Schubert polynomials by setting certain variables to zero. We study affine Schubert polynomials and divided difference operators by constructing an affine analogue of Fomin-Killilov algebra called an affine Fomin-Kirillov algebra. We introduce Murnaghan-Nakayama elements and Dunkl elements in the affine Fomin-Kirillov algebra to describe the cohomology of the affine flag variety and affine Schubert polynomials, and by doing so we also obtain a Murnaghan-Nakayama rule for the affine Schubert polynomials. 
\end{abstract}

\section{Introduction}
Schubert calculus is a branch of algebraic geometry introduced by Hermann Schubert, in order to solve various counting problems of projective geometry. Schubert studied geometric objects, now called Schubert varieties, which are certain closed varieties in a Grassmannian. The intersection theory of these varieties can be studied via the product structure of the cohomology of the Grassmannian by the solution of Hilbert's Fifteenth problem. Schubert varieties give Schubert classes, elements in the Schubert basis of the cohomology of the Grassmannian, and the Schur functions are polynomial representatives of the Schubert classes. Therefore, Schur functions play an important role in understanding Schubert calculus. Generalizing this phenomena, many mathematicians have defined and studied polynomial representatives for Schubert classes in various cohomology theories of varieties such as Grassmannians and flag varieties.\\

Affine Schubert calculus is a generalization of Schubert calculus for the affine Grassmannian and the affine flag variety instead of the Grassmannian and the flag variety. The (equivariant) homology of the affine flag variety and the affine Grassmannian can be identified with the nilHecke ring and the Peterson subalgebra by the work of Kostant, Kumar, and Peterson \cite{KK86,Pet97} providing an algebraic framework to understand the homology. The combinatorial theory of the affine Schubert calculus was facilitated and studied extensively since Lam \cite{Lam08} showed that the affine Schur functions and the $k$-Schur functions are polynomial representatives of the Schubert classes in the cohomology and the homology of the affine Grassmannian associated to $SL(n)$. Note that $k$-Schur functions were introduced by Lapointe, Lascoux, and Morse \cite{LLM03} during the study of the Macdonald positivity conjecture, suggesting that a full development of the combinatorics in affine Schubert calculus will be substantial to understand the Macdonald theory as well. For more details about the affine Schubert calculus, see \cite{LLMSSZ}.\\

Compared to the combinatorics of the affine Grassmannian, the situation for the affine flag variety is much less known. The best way (to the author's best knowledge) to describe the cohomology of the affine flag variety in the past literature is by taking the dual of the nilHecke algebra \cite{KK86} which makes the combinatorics rather mysterious. There are several serious obstacles to understanding the cohomology of the affine flag variety. The fact that Borel's characteristic map is not surjective for the affine flag variety implies that its cohomology is not generated by degree one elements, resulting in the absence of the affine Schubert polynomials and their combinatorics in terms of degree one elements. In this paper, we study higher degree elements that are not generated by degree one elements and use them to define the affine Schubert polynomials, the polynomial representatives of Schubert classes in the cohomology of the affine flag variety associated to $SL(n)$. \\

Before we define the affine Schubert polynomials, we first need to describe the cohomology of the affine flag variety in terms of a polynomial ring. Let $G$ be $SL(n)$, $B$ Borel subgroup, $T$ the maximal torus, $K$ the maximal compact form of $G$, $T_{\mathbb{R}}=K\cap T$, $\Omega K$ the topological group of based loops into $K$, and $LK$ the space of loops into $K$. We call $\Omega K$ an affine Grassmannian associated to $SL(n)$, denoted by $\gr$, and $LK/T_{\mathbb{R}}$ an affine flag variety associated to $G$, denoted by $\fl$. By the K\"unneth isomorphism and the isomorphism $LK \cong \Omega K \times K$, we have an isomorphism 
\begin{align}\label{quillen}
H^*(LK/T_{\mathbb{R}})\cong H^*(\Omega K) \otimes_\mathbb{Q} H^* (K/T_{\mathbb{R}})
\end{align}
as $\mathbb{Q}$-algebras where the coefficient ring is over $\mathbb{Q}$. The cohomology rings on the right can be described as follows. Since $K/T_{\mathbb{R}}$ is the flag variety, the cohomology of $K/T_{\mathbb{R}}$ is isomorphic to $\mathbb{Q}[x_1, \ldots,x_n]/\{e_j(x_1,\ldots,x_n)\}$, denoted by $R_{Fl_n}$, where $e_j(x_1,\ldots,x_n)$ is the elementary symmetric polynomial of degree $j$ in $x_1,\ldots,x_n$. For an element $w$ in the symmetric group $S_n$,  Lascoux and Sch\"{u}tzenberger \cite{LS82} defined the Schubert polynomial $\mathfrak{S}_w$ that maps to the Schubert class $\xi^w_{Fl_n}$ in $H^*(K/T_{\mathbb{R}})$. On the other hand, $H^*(\gr)$ is isomorphic to $\mathbb{Q}[p_1,\ldots,p_{n-1}]$ where $p_i$ is the power sum symmetric function of degree $i$ in another set of variables $y_1,y_2,\ldots$, and Lam \cite{Lam08} showed that the Schubert class $\xi^w_{\gr}$ in $H^*(K/T_{\mathbb{R}})$ for an element $w$ in $\sn/S_n$ maps to the affine Stanley symmetric function, where $\sn$ denote the affine symmetric group. We denote $\mathbb{Q}[p_1,\ldots,p_{n-1}]$ by $R_{\gr}$.\\

Let $R_n$ be the tensor product $R_{\gr}\otimes_\mathbb{Q} R_{Fl_n}$ which is isomorphic to $H^*(LK/T_{\mathbb{R}})$. The cohomology of the affine flag variety $H^*(LK/T_{\mathbb{R}})$ also has Schubert basis $\xi^w$ indexed by an element in $\sn$ (see Section \ref{affine}). One of main results in this paper is a description of the affine Schubert polynomial $\asp_w$, the element of $R_n$ corresponding to the Schubert basis element $\xi^w$, in terms of divided difference operators. We first define divided difference operator $\partial_i$ for $i\in \mathbb{Z}/n\mathbb{Z}$ acting on $R_n$.

\begin{definition}\label{bgg}
For $i\in \mathbb{Z}/n\mathbb{Z}$, the Weyl group action $s_i$ and the divided difference operator $\partial_i:={1-s_i \over x_i-x_{i+1}}$ on $R_n$ can be uniquely defined by the following rules.
\begin{enumerate}
\item For $f,g \in H^*(\fl)$, we have $s_i(fg)=s_i(f)s_i(g)$. Therefore $\partial_i$ satisfies the Leibniz's rule: for $f,g \in H^*(\fl)$, we have
$$\partial_i(fg)= \partial_i(f)g + s_i(f) \partial_i(g).$$
\item For nonzero $i$ and for all $m$, we have 
\begin{align*}
s_i(p_m)&=p_m \\
\partial_i(p_m)&=0.
\end{align*}
\item For $i=0$, we have
\begin{align*}
s_0(p_m)&= p_m+x_1^m-x_0^m\\
\partial_0(p_m)&= \sum_{j=0}^{m-1} x_1^{m-1-j}x_0^j
\end{align*}

\item For all $i,j\in \mathbb{Z}/n\mathbb{Z}$, we have 
\begin{align*}
s_i(x_j)&=x_{s_i(j)} \\
\partial_i(x_j)&=\delta_{ij}-\delta_{i,j+1}.
\end{align*}

\end{enumerate}
\end{definition}

The ring $R_n$ is graded under the following degree: $\deg(x_i)=1$ and $\deg(p_j)=j$.
Now we are ready to state the main theorem, existence of affine Schubert polynomials.
\begin{thm}\label{asp}
For $w \in \sn$, there exists a unique homogeneous element $\asp_w$ of degree $\ell(w)$ in $R_n$ satisfying
$$\partial_i\asp_w=\begin{cases} \asp_{ws_i} & \text{ if } \ell(ws_i)=\ell(w)-1 \\ 0 & \text{ otherwise}.\end{cases}$$
for $i\in \mathbb{Z}/n\mathbb{Z}$, with the initial condition $\asp_{id}=1$. Moreover, under the isomorphism $ H^*(LK/T_{\mathbb{R}}) \cong R_n$, a Schubert class $\xi^w$ sends to $\asp_w$. We call $\asp_w$ an affine Schubert polynomial for $w$.
\end{thm}

Since affine Schubert polynomials represent Schubert basis $\xi^w$, they are naturally related with Schubert polynomials and affine Stanley symmetric functions. Let $pr_1$ be the inclusion $\Omega K\rightarrow LK/T_{\mathbb{R}}$ and $pr_2$ be the inclusion $K/T_{\mathbb{R}}\rightarrow LK/T_{\mathbb{R}}$. Then the pullback $pr_1^*: R_n \cong H^*(LK/T_{\mathbb{R}})\rightarrow H^*(\Omega K) \cong R_{\gr}$ is the map sending all $x_i$'s to zero and the pullback $pr_2^*: R_n \cong H^*(LK/T_{\mathbb{R}})\rightarrow H^*(K/T_{\mathbb{R}} )\cong R_{Fl_n}$ is the map sending all $p_j$'s to zero. Under this setup, we have

\begin{thm} \label{restriction}
For $w$ in $\sn$, the pull-back image $pr_1^*(\asp_w)$ is the same as the affine Stanley symmetric function $F_w$. Also, the image $pr_2^*(\asp_w)$ is the same as the Schubert polynomial $\mathfrak{S}_w$ if $w \in S_n$, and zero otherwise.
\end{thm}

We also provide Murnaghan-Nakayama rule, the expansion of $p_m \asp_w$ in terms of affine Schubert polynomials (See Section 9). To prove the existence and properties of the affine Schubert polynomials, we first interpret the cohomologies of the affine flag variety, the affine Grassmannian and the flag variety as subalgebras of $\Hom_{\mathbb{Q}}(\mathbb{A},\mathbb{A})$ where $\mathbb{A}$ is the affine nilCoxeter algebra studied by Kostant and Kumar \cite{KK86}. Then we define an affine analogue of the Fomin-Kirillov algebra \cite{FK99}, called an \emph{affine Fomin-Kirillov algebra}, to study combinatorics of these subalgebras. We define a map from the affine Fomin-Kirillov algebra $\afk$ to $\homa$ defined by Bruhat action, then we show that the subalgebra generated by Dunkl elements and Murnaghan elements in $\afk$ maps to the subalgebra of $\homa$ isomorphic to the cohomology of the affine flag variety, and the subalgebra generated by Dunkl elements (resp. Murnaghan elements) in $\afk$ maps to the subalgebra of $\homa$ isomorphic to the cohomology of the flag variety (resp. the affine Grassmannian). Dunkl elements in $\afk$ is an affine analogue of Dunkl elements in Fomin-Kirillov algebra \cite{FK99}, but the definition of Murnaghan elements is new and the definition naturally contains Murnaghan Nakayama rule of the affine Schubert polynomials. We also generalize the Fomin-Kirillov conjecture in the affine Fomin-Kirillov algebra (Section 8). Note that a lot of variations for the Fomin-Kirillov algebra have been studied \cite{Kir15,KM03,KM04,KM05,KM10,KM12}. For example, there are generalizations of the Fomin-Kirillov algebra for K-theory, quantum, equivariant cohomology and for other finite types, so developing their affine analogue would be an interesting problem. Note that the isomorphism (\ref{quillen}) can be generalized for other types and for equivariant cohomologies. \\

There are some direct applications of Murnaghan-Nakayama rule for the affine Schubert polynomials. By Theorem \ref{restriction}, we also obtain Murnaghan-Nakayama rule for affine Stanley symmetric functions and by a duality, we write $k$-Schur functions in terms of power sum symmetric functions using $k$-strong-ribbon tableaux. This expansion gives the character table of the representation of the symmetric group whose Frobenius characteristic is the $k$-Schur function. Bandlow, Schilling, Zabrocki studied $k$-weak-ribbons, called $k$-ribbons in \cite{BSZ10}, to describe the MN rule for the $k$-Schur functions. $k$-strong-ribbons are combinatorial objects dual to $k$-weak-ribbons, since $k$-weak-ribbons give the MN rule for $k$-Schur functions and $k$-strong-ribbons give a MN rule for affine Stanley symmetric functions.\\

The paper is structured as follows: In Section 2, we review affine symmetric groups, symmetric functions, affine flag varieties, affine Grassmannians, affine nilCoxeter algebra, and the Fomin-Kirillov algebra. In Section 3, we define the affine Fomin-Kirillov algebra (the affine FK algebra in short) and study the Bruhat operator acting on the affine nilCoxeter algebra. In Section 4, we define Dunkl elements and Murnaghan-Nakayama elements, and investigate relations among those elements. In Section 5, we derive identities that uniquely determine Bruhat operators for Murnaghan-Nakayama elements. In Section 6, we recall properties of cap operators and BSS operators and show an equality between Bruhat operators for MN elements and certain cap operators (Theorem \ref{main}). In Section 6, we define the divided difference operators acting on the affine FK algebra. In Section 7, we define and discuss the divided difference operators acting on $R_n$ and the affine Schubert polynomials. In Section 8, we state the nonnegativity conjecture in the affine FK algebra. In section 9, we state the Murnaghan-Nakayama rules both for the affine Schubert polynomial and the affine Stanley symmetric functions. In Section 10, we discuss a new formula for $k$-Schur functions in terms of power sum symmetric functions as well as its relation with representation theory.

\section{Preliminaries}
\subsection{Affine symmetric group}
Let $I$ be the set $\{0,1,\ldots,n-1\}=\mathbb{Z}/n\mathbb{Z}$. Let $\sn$ denote the affine symmetric group with simple generators $s_0,s_1,\ldots,s_{n-1}$ satisfying the relations
\begin{align*}
s_i^2&=1&\\
s_is_{i+1}s_i&=s_{i+1}s_is_{i+1}\\
s_is_j&=s_js_i&& \text{if } i-j\neq 1,-1,
\end{align*} 
where indices are taken modulo $n$. An element of the affine symmetric group may be written as a word in the generators $s_i$. A \emph{reduced word} of the element is a word of minimal length. The \emph{length} of $w$, denoted $\ell(w)$, is the number of generators in any reduced word of $w$. \\

The \emph{Bruhat order}, also called \emph{strong order}, on affine symmetric group elements is a partial order where $u<w$ if there is a reduced word for $u$ that is a subword of a reduced word for $w$. If $u<w$ and $\ell(u)=\ell(w)-1$, we write $u\lessdot w$. It is well-known that $u\lessdot w$ if and only if there exists a transposition $t_{ab}$ in $\sn$ such that $w=ut_{ab}$ and $\ell(u)=\ell(w)-1$. See \cite{BB05} for instance.\\

The subgroup of $\sn$ generated by $\{s_1,\cdots,s_{n-1}\}$ is naturally isomorphic to the symmetric group $S_n$. The $0$-Grassmannian elements are minimal length coset representatives of $\sn/ S_n$. In other words, $w$ is $0$-Grassmannian if and only if all reduced words of $w$ end with $s_0$. More generally, for $i \in \mathbb{Z}/n\mathbb{Z}$ and $w \in \sn$, $w$ is called $i$-Grassmannian if all reduced words of $w$ end with $s_i$. We denote the set of $i$-Grassmannian elements by $\sn^i$. \\

For $a\in \mathbb{Z}$ and $u,w \in \sn$, a \emph{marked strong cover} $(u\overset{(j_1,j_2)}\longrightarrow w)$ with respect to $a$ is defined by $w\lessdot wt_{j_1,j_2}=u$ with $j_1\leq a< j_2$ and $\ell(w)=\ell(u)-1$. For this cover, we distinguish two strong covers corresponding to $t_{j_1,j_2}$ and $t_{j_1+n,j_2+n}$ so that we may have multiple covers between fixed $w,u$. Note that $w(j_2)$ is called the marking of the strong cover in \cite{BSS13,BSS14,LLMS10} and the marking is used to define strong strips and $k$-Schur functions. 
  
\subsection{Symmetric functions}
Let $\Lambda$ denote the ring of symmetric functions over $\mathbb{Q}$. For a partition $\lambda$, we let $m_\lambda,h_\lambda,p_\lambda,s_\lambda$ denote the monomial, homogeneous, power sum and Schur symmetric functions, respectively, indexed by $\lambda$. Each of these families forms a basis of $\Lambda$. Let $\langle\cdot,\cdot\rangle$ be the Hall inner product on $\Lambda$ satisfying $\langle m_\lambda,h_\mu\rangle=\langle s_\lambda,s_\mu \rangle=\delta_{\lambda,\mu}$ for partitions $\lambda,\mu$. See \cite{Sta99} for details of symmetric functions. \\

Let $\Lambda_{(k)}$ denote the subalgebra generated by $h_1,h_2,\ldots,h_k$. The elements $h_\lambda$ with $\lambda_1\leq k$ form a basis of $\Lambda_{(k)}$. We call a partition $\lambda$ with $\lambda_1\leq k$ a \emph{$k$-bounded partition}. Note that there is a bijection between the set of $k$-bounded partitions and the set of $0$-Grassmannian elements in $\sn$ \cite{LM05}. We denote by $\lambda(w)$ the $k$-bounded partition corresponding to the $0$-Grassmannian element $w$. Let $\Lambda^{(k)}=\Lambda/I_k$ denote the quotient of $\Lambda$ by the ideal $I_k$ generated by $m_\lambda$ with $\lambda_1>k$. The image of the elements $m_\lambda$ with $\lambda_1\leq k$ form a basis of $\Lambda^{(k)}$. Note that $I_k$ is isomorphic to the ideal generated by $p_\lambda$ for $\lambda_1>k$, so that $p_\lambda$ for $k$-bounded partitions $\lambda$ form a basis of $\Lambda^{(k)}$.\\

There is another remarkable basis for $\Lambda_{(k)}$ and $\Lambda^{(k)}$. For a $k$-bounded partition $\lambda$, a $k$-Schur function $s^{(k)}_\lambda$  and an affine Schur function $\f_\lambda$ are defined in \cite{Lam06,LLMS10}. Lam \cite{Lam08} showed that the $k$-Schur functions (resp. the affine Schur functions) are representatives of the Schubert basis of the homology (resp. the cohomology) of the affine Grassmannian $\gr$ for $SL(n)$ via the isomorphisms of Hopf-algebras
\begin{align*}
\Lambda_{(k)}&\cong H_*(\gr) \\
\Lambda^{(k)}&\cong H^*(\gr).
\end{align*}
The restriction of the Hall inner product on $\Lambda^{(k)}\times \Lambda_{(k)}$ gives the identity $\langle \f_\lambda,s^{(k)}_\mu\rangle=\delta_{\lambda,\mu}$.
Since we do not use the definitions of $k$-Schur functions, affine Schur functions, and affine Stanley symmetric functions in this paper, definitions are omitted. See \cite{Lam06,Lam08,LLMS10} for more details.

\subsection{Affine flag varieties and affine Grassmannians} \label{affine}
We define the Kac-Moody flag variety $\fl$, the affine Grassmannian and the Schubert basis on the (co)homology of the affine flag variety and the affine Grassmannian in algebraic way and compare with the definitions of these defined in the introduction. There are two (algebraic) definitions of the Kac-Moody flag variety $\fl$ in \cite{LLMSSZ}, but we only recall $\fl$ as the Kac-Moody flag ind-variety in \cite{KK86,Kum02}. \\

Let $G_\af$ denote the Kac-Moody group of affine type associated with $G:=SL(n)$ and let $B_\af$ denote its Borel subgroup. The Kac-Moody flag ind-variety $\fl=G_\af/B_\af$ is paved by cells $B_\af\dot{w}B_\af/B_\af\cong\mathbb{C}^{\ell(w)}$ whose closure $X_w$ is called the Schubert variety where $\dot{w}$ is a representative element in $G_\af$ corresponding to $w$ in $\sn$. A Schubert variety defines a Schubert class $\xi^w \in H^*(\fl)$ and $\xi_w \in H_*(\fl)$. The affine Grassmannian $\gr$ is $G_\af/P_\af$ where $P_\af$ is the maximal parabolic subgroup obtained by ``omitting the zero node". \\

Quillen \cite{Qui} prove that there are weak homotopy equivalences $\Omega K \cong \gr$ and $LK/T_{\mathbb{R}} \cong \fl$ under which the actions of $T_{\mathbb{R}}$ and $T$ correspond, so that we can identify these spaces if we are working on (co)homologies of these. Lam \cite{Lam08} showed that the affine Stanley symmetric function $\tilde{F}_w$ is the pullback $pr_1^*(\xi^w)$ in $H^*(\gr)\cong \Lambda^{(k)}$ where $pr_1:\Omega K\rightarrow LK/T_\mathbb{R}$ . Note that we also have a surjection $q_1: \fl \rightarrow \gr$ such that $q_1\circ pr_1$ is the identity up to homotopy, and an evaluation map $ev: \fl \cong LK/T_\mathbb{R} \rightarrow K/T_\mathbb{R}\cong Fl_n$ sending a loop $f$ to its starting point $f(1_K)$.\\
By (\ref{quillen}), we have an isomorphism of $\mathbb{Q}$-algebras
\begin{align}\label{isom1}
H^*(\fl)\cong H^*(\gr) \otimes_\mathbb{Q} H^*(Fl_n),
\end{align}
and an isomorphism of $\mathbb{Q}$-modules
\begin{align}\label{isom2}
H_*(\fl)\cong H_*(\gr) \otimes_\mathbb{Q} H_*(Fl_n),
\end{align}
 where $Fl_n$ is the flag variety.

\subsection{Affine nilCoxeter algebra}\label{affinenilcoxeter}

In this subsection, we review the theory of the affine nilCoxeter algebra and its connection with the previous section.

The \emph{affine nilCoxeter algebra} $\mathbb{A}$ is the algebra generated by $A_0,A_1,\ldots,A_{n-1}$ over $\mathbb{Q}$, satisfying
\begin{align*}
A_i^2&=0&\\
A_iA_{i+1}A_i&=A_{i+1}A_i A_{i+1}\\
A_iA_j&=A_j A_i&& \text{if } i-j\neq 1,-1.
\end{align*} 
where the indices are taken modulo $n$. The subalgebra $\mathbb{A}_f$ of $\mathbb{A}$ generated by $A_i$ for $i\neq 0$ is isomorphic to the nilCoxeter algebra studied by Fomin and Stanley \cite{FS94}. The simple generators $A_i$ are considered as the \emph{divided difference operators} (see Section \ref{divided}). \\

The $A_i$ satisfy the same braid relations as the $s_i$ in $\sn$, i.e., $A_iA_{i+1}A_i=A_{i+1}A_iA_{i+1}$. Therefore it makes sense to define
\[ \begin{array} {rlll} A_w&=&A_{i_1}\cdots A_{i_l} & \mbox{where} \\ w&=&s_{i_1}\cdots s_{i_l} & \mbox{is a reduced decomposition.} \end{array} \]

One can check that 
\[A_vA_w=\left\{ \begin{array}{ll} A_{vw} & \mbox{if } \ell(vw)=\ell(v)+\ell(w) \\ 0 & \mbox{otherwise.}\end{array} \right. \]

There is a coproduct structure on $\mathbb{A}$ defined by
$$\Delta(A_w)=\sum p^w_{u,v}A_u\otimes A_v$$
where the sum is over all $u,v\in \sn$ satisfying $\ell(w)=\ell(u)+\ell(v)$. Kostant and Kumar \cite{KK86} showed that $p^w_{u,v}$ is the same as the structure coefficient for the cohomology of the affine flag variety. Note that $p^w_{u,v}$'s are nonnegative integers \cite{Gra01}.\\

A word $s_{i_1}s_{i_2}\cdots s_{i_l}$ with indices in $\mathbb{Z}/n\mathbb{Z}$ is called \emph{cyclically decreasing} if each letter occurs at most once and whenever $s_i$ and $s_{i+1}$ both occur in the word, $s_{i+1}$ precedes $s_i$. For $J\varsubsetneq\mathbb{Z}/n\mathbb{Z}$, a \emph{cyclically decreasing element} $w_J$ is the unique cyclically decreasing permutation in $\sn$ which uses exactly the simple generators in $\{s_j \mid j\in J\}$. For $i <n$, let
\[\h_i=\sum\limits_{\substack{ J\subset \mathbb{Z}/n\mathbb{Z} \\ |J|=i }} A_{w_J} \in \mathbb{A} \]
where $\h_0=1$ and $\h_i=0$ for $i< 0$ by convention. Lam \cite{Lam06} showed that the elements $\{\h_i\}_{i<n}$ commute and freely generate a subalgebra $\mathbb{B}$ of $\mathbb{A}$ called the \emph{affine Fomin-Stanley algebra}. It is well-known that $\mathbb{B}$ is isomorphic to $\Lambda_{(k)}$ via the map sending $\h_i$ to $h_i$. Therefore, the set $\{\h_\lambda=\h_{\lambda_1}\ldots \h_{\lambda_l} \mid \lambda_1\leq k\}$ forms a basis of $\mathbb{B}$. \\

There is another basis of $\mathbb{B}$, called the \emph{noncommutative $k$-Schur functions} $\s_\lambda$. For a bounded partition $\lambda$, the noncommutative $k$-Schur function $\s_\lambda$ is the image of $s^{(k)}_\lambda$ via the isomorphism $\Lambda_{(k)}\cong \mathbb{B}$. It is shown \cite{Lam08} that the noncommutative $k$-Schur function $\s_\lambda$ is the unique element in $\mathbb{B}$ that has the unique $0$-Grassmannian term $A_{w_\lambda}$ where $w_\lambda$ is the 0-Grassmannian element corresponding to $\lambda$. We also denote $s^{(k)}_\lambda$ by $s^{(k)}_{w(\lambda)}$. The noncommutative $k$-Schur functions are non-equivariant version of the $j$ functions studied by Peterson \cite{Pet97} for affine type $A$. For details and the original definition of noncommutative $k$-Schur functions, see \cite{Lam08,LLMSSZ}.\\

We recall the following theorem proved in \cite[Section 4.1]{BSS14}. 
\begin{thm}\label{basisa}
For an element $w\in \sn$, let $w=w^0 w^1$ be the unique decomposition with a 0-Grassmannian element $w^0$ and $w^1 \in S_n$. Then the set 
$$\{\s_{w^0}A_{w^1}\mid w \in \sn \}$$
forms a basis of $\mathbb{A}$.
\end{thm}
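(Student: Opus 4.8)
The plan is to show that $\{\s_{w_0}A_{w_1} : w=w_0w_1 \in \sn\}$ is a basis of $\mathbb A$ by a triangularity argument with respect to the known basis $\{A_w : w \in \sn\}$. The starting point is the structural fact, recalled just above, that $\s_{w_0}$ is the unique element of the affine Fomin-Stanley algebra $\mathbb B$ whose unique $0$-Grassmannian term is $A_{w_0}$; in particular we may write
\begin{equation*}
\s_{w_0} = A_{w_0} + \sum_{u} c^{w_0}_u A_u,
\end{equation*}
where the sum runs over $u \in \sn$ with $\ell(u)=\ell(w_0)$ that are \emph{not} $0$-Grassmannian. I would first make precise the relevant partial order on $\sn$: since every $u$ appearing has a reduced word ending in some $s_i$ with $i \neq 0$, right-multiplication by $A_{w_1}$ (for $w_1 \in S_n$) interacts with these terms in a controlled way. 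Concretely, I want to expand $\s_{w_0} A_{w_1}$ in the $A_v$-basis and identify a distinguished ``leading'' term, namely $A_{w_0 w_1} = A_{w_0}A_{w_1}$, which is nonzero and of the right length because $\ell(w_0 w_1) = \ell(w_0)+\ell(w_1)$ for the canonical factorization of a coset representative.

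The key combinatorial step is a projection/leading-term argument. Consider the $\mathbb Q$-linear map $\pi \colon \mathbb A \to \mathbb A$ (or rather, track coefficients) that, on the $A_v$-basis, records for each $v$ its canonical factorization $v = v_0 v_1$ with $v_0$ $0$-Grassmannian and $v_1 \in S_n$. I would argue that in the expansion of $\s_{w_0}A_{w_1}$, the coefficient of $A_{v_0 v_1}$ vanishes unless $v_0 \leq w_0$ in a suitable sense, and equals $1$ when $(v_0,v_1) = (w_0,w_1)$. For this, the crucial input is that $\s_{w_0} \in \mathbb B$ and $\mathbb B$ is the affine Fomin-Stanley algebra: multiplication of a cyclically decreasing element $A_{w_J}$ (these span $\mathbb B$ in low degree, and products of $\h_i$ span it in general) by $A_{w_1}$ on the right either lengthens properly or kills the term, and the $0$-Grassmannian ``shadow'' of the result can only decrease. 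Combined with the defining property of $\s_{w_0}$, this gives that the matrix expressing $\{\s_{w_0}A_{w_1}\}$ in terms of $\{A_{v_0 v_1}\} = \{A_v\}$ is unitriangular with respect to the product partial order (Bruhat on the $0$-Grassmannian part, refined arbitrarily on the $S_n$ part), hence invertible over $\mathbb Z$.

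Since both sets are indexed by $\sn$ (via $w \leftrightarrow (w_0,w_1)$) and have the same cardinality in each length-graded piece, unitriangularity of the transition matrix immediately yields that $\{\s_{w_0}A_{w_1} : w \in \sn\}$ is a $\mathbb Z$-basis (a fortiori a $\mathbb Q$-basis) of $\mathbb A$. I expect the main obstacle to be establishing the triangularity precisely: one must show that no ``cancellation'' or ``upward'' contribution occurs, i.e. that when a non-$0$-Grassmannian term $c^{w_0}_u A_u$ of $\s_{w_0}$ is multiplied by $A_{w_1}$ and re-expanded, the resulting basis elements $A_v$ all have $0$-Grassmannian part strictly below $w_0$, never equal to $w_0$ with a ``larger'' $S_n$-part in a way that would collide with another $\s_{w_0'}A_{w_1'}$. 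Handling this cleanly likely requires the characterization of $0$-Grassmannian factorizations together with the fact (from \cite{Lam06,Lam08}) that $\mathbb B \cap \mathbb A_f = \mathbb Q$, so that the only way to land back with the full $0$-Grassmannian part $w_0$ after right-multiplication by an element of $\mathbb A_f$ is the trivial one. The rest is bookkeeping with reduced words and the braid relations.
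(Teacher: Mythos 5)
Your triangularity argument is correct and is essentially the paper's own proof, which is only sketched there as an induction on $\ell(w_0)$ using the fact that $\s_{w_0}$ has $A_{w_0}$ as its unique $0$-Grassmannian term. The potential ``collision'' you worry about is ruled out by something simpler than $\mathbb{B}\cap\mathbb{A}_f=\mathbb{Q}$: every non-$0$-Grassmannian term $A_u$ of $\s_{w_0}$ has $u=u_0u_1$ with $\ell(u_0)<\ell(u)=\ell(w_0)$, and right multiplication by $A_{w_1}$ with $w_1\in S_n$ keeps you in the coset $u_0S_n$, so each surviving term $A_{uw_1}$ has $0$-Grassmannian part $u_0$ of length strictly less than $\ell(w_0)$, giving unitriangularity graded by $\ell(v_0)$ within each finite-dimensional length component.
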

One can show Theorem \ref{basisa} by an induction on $\ell(w^0)$ with the fact that $\s_{w^0}$ has the unique $0$-Grassmannian term $A_{w^0}$.\\

In this paper, we identify $\mathbb{B}$ with $H_*(\gr)$ as Hopf-algebras over $\mathbb{Q}$ where the noncommutative $k$-Schur functions send to the Schubert homology classes. Also, we identify $\mathbb{A}$ (resp. $\mathbb{A}_f$) with $H_*(\fl)$ (resp. $H_*(Fl_n)$) as $\mathbb{Q}$-modules and identify $A_w$ with the Schubert basis $\xi_w$ in $H_*(\fl)$ (resp. in $H_*(Fl_n)$). The identification is natural since there is a coproduct structure on $\mathbb{A}$ such that coproduct structure constants are the same as the multiplicative structure of $H^*(\fl)$ in terms of Schubert basis.\\

Then we can consider a Schubert class $\xi^w$ (or any cohomology class) in $H^*(\fl)$ as an action on $\mathbb{A}$ defined by a cap product: for given $\xi^w$, the map sending a homology class $\xi_v$ to its cap product $\xi^w \cap \xi_v$ gives an algebra monomorphism $H^*(\fl)\rightarrow \homa$. We denote the image of $\xi^w$ by $D_w$ and we call a \emph{cap operator}. The author studied combinatorics of cap operators in \cite{Lee14} to prove Pieri rule for the affine flag variety. Note that there is a surjection from $\mathbb{A} \rightarrow \mathbb{Q}$ sending all $A_w$ to $1$ if $w=id$, and to $0$ otherwise. This induces a surjection from $\homa$ to $\homq$ and the composition $H^*(\fl) \rightarrow \homa \rightarrow \homq$ gives a natural pairing map

$$H^*(\fl) \times \mathbb{A} \cong H^*(\fl) \times H_*(\fl) \rightarrow \mathbb{Q}$$
such that $\xi^w$ (or $D_w$) and $\xi_v$ (or $A_v$) form a dual basis. In other words, the coefficient of $A_{\text{id}}$ in $D_w(A_v)$ is $\delta_{wv}$. In this paper, we often identify $H^*(\fl)$ with its image in $\homa$ or with $\homq$. Note that $\homq$ does not have a natural product structure, but we impose the product structure induced by those from $H^*(\fl)$.
\\

Since $\mathbb{B}$ (resp. $\mathbb{A}_f$) is identified with the homology of the affine Grassmannian (resp. flag variety), Theorem \ref{basisa} provides combinatorial interpretations of the isomorphism (\ref{isom1}) and (\ref{isom2}). The pullbacks and pushforwards of maps $pr_1,q_1,pr_2,ev$ can be purely written in terms of the affine nilCoxeter algebra. For example, the pushforward of the evaluation map is 
$$ev_*: \mathbb{A}\rightarrow \mathbb{A}_f$$ sending $\s_{w^0}A_{w^1}$ to 0 if $w^0$ is not the identity, and to $A_{w^1}$ otherwise. The evaluation map also induces the pullback $ev^*: H^*(Fl_n) \rightarrow H^*(\fl)$. In terms of the affine nilCoxeter algebra, the pullback is $ev^*: \Hom(\mathbb{A}_f,\mathbb{Q})\rightarrow \homq $ sending $g \in \Hom(\mathbb{A}_f,\mathbb{Q})$ to $ev^*(g)$ defined by $ev^*(g)(\s_{w^0}A_{w^1})=g(A_{w^1})$ if $w^0$ is the identity, and $=0$ otherwise. One can show $\s_{s_0}=\h_1=A_0+A_1+\ldots+A_{n-1}$ (see \cite{Lam06}),  and the image $ev^*(\xi_{Fl_n}^{s_i})$ for $i\neq 0$ is $\xi^{s_i}-\xi^{s_0}$ since $(\xi^{s_i}-\xi^{s_0})(\s_{s_0})=0$ and $(\xi^{s_i}-\xi^{s_0})(A_j)= \delta_{ij}$ for nonzero $j$. This computation shows the following theorem.
\begin{thm} \label{degree1image}
For $i=1,\ldots,n-1$, the image of $\xi^{s_i}_{Fl_n}$ under the pullback $ev^*: H^*(Fl_n) \rightarrow H^*(\fl)$ is $\xi^{s_i}-\xi^{s_0}$.

\end{thm}

\subsection{Fomin-Kirillov algebra}\label{finitefk}
We review some facts about the Fomin-Kirillov algebra proved in \cite{FK99}.
\begin{definition}
For a fixed positive integer $n$, let $\fk$ be the free algebra over $\mathbb{Q}$ generated by $\{[ij]: i,j\in \mathbb{Z},1\leq i<j \leq n\}$ with the following relations:
\begin{align*}
[ij]^2&=0.\\
[ij][kl]&=[kl][ij] \text{ for distinct } i, j, k ,l. \\
[ij][jk]&=[jk][ik]+[ik][ij] \text{ and } [jk][ij]=[ik][jk]+[ij][ik] \text{ for distinct } i, j, k.
\end{align*}
\end{definition}
Let $\theta_i$ be the Dunkl elements defined by $\sum_{j\neq i}[ij]$. Here for $i<j$, $[ji]$ is the same as $-[ij]$. Then the $\theta_i$'s commute pairwise for all $i$. Moreover, all symmetric functions in Dunkl elements vanish in $\fk$ and these are all relations between $\theta_i$. Therefore, the commutative subalgebra generated by Dunkl elements is isomorphic to the cohomology of the flag variety, proved by Fomin and Kirillov in \cite{FK99}. Under the isomorphism, a Dunkl element $\theta_i$ sends to $\xi_{Fl_n}^{s_{i+1}}-\xi_{Fl_n}^{s_{i}}$.

\section{Affine Fomin-Kirillov algebra}\label{affinefk}
For $i \in \mathbb{Z}$, let $\overline{i}$ be the residue of $i$ modulo $n$.
\begin{definition}\label{fk}
 Let $B$ be the free algebra over $\mathbb{Q}$ generated by $\mathcal{S}=\{[ij]: i,j\in \mathbb{Z},i<j,\overline{i}\neq\overline{j}\}$. Let $B(N)$ be the subalgebra of $B$ generated by elements $[ij]$ with $|i|,|j|\geq N$. Then we have a filtration
$$B=B(0)\supset B(1) \supset B(2) \supset \cdots.$$
Let $\mathcal{A}$ be the inverse limit $\varprojlim \left(B/B(i)\right)$.
\end{definition}

 An element $\x$ in $\mathcal{A}$ can be written as a (possibly infinite) sum $\sum_{J,m} a_{J,m}\x_{J,m}$ where $a_{J,m}\in \mathbb{Z}$, $J$ is in $\prod_{i=1}^m \mathcal{S}$ for $m\geq 0$, and $\x_{J,m}=[j_{1,1},j_{1,2}][j_{2,1},j_{2,2}]\cdots[j_{m,1},j_{m,2}]$ when $J=\left([j_{1,1},j_{1,2}],[j_{2,1},j_{2,2}],\cdots,[j_{m,1},j_{m,2}]\right)$. For $m=0$, $J$ is an empty set and we set $\x_{J,m}=1$. For $i>j$, we use the convention $[ij]=-[ji]$. We call $\x_{J,m}$ a \emph{noncommutative monomial} in $\mathcal{A}$.\\

Define the \emph{Bruhat action} of $[ij]$ on $\mathbb{A}$ by
\begin{equation}\label{bruhat}
A_w\cdot[ij] =
\begin{cases}
A_{wt_{ij}} & \text{if}\quad \ell(wt_{ij})=\ell(w)-1 \\
0 & \text{otherwise.}
\end{cases}
\end{equation}

For an element in $B$, one can define an action on $\mathbb{A}$ extended linearly. For an element $\x$ in $\mathcal{A}$, even if $\x$ is an infinite summation of product of $[ij]$'s, it is possible that all but finitely many terms $A_w \cdot \x_{J,m}$ vanish when acting by $\x$ on an element $A_w$. If this happens, we say that an element $\x$ gives a valid action on $\mathbb{A}$. Let $\mathcal{E}$ be the subalgebra of $\mathcal{A}$ consisting of elements which give a valid action on $\mathbb{A}$. Most elements in $\mathcal{A}$ in this paper are infinite sums but have a valid action on $\mathbb{A}$. Define the map $D: \mathcal{E} \rightarrow \homa$ by sending $\x$ to $D_\x$, where $D_\x(A_v):=A_v \cdot \x$. We call $D_\x$ a \emph{Bruhat operator} for $\x$. We often say ``$\x$ as a Bruhat operator'' instead of $D_\x$ since we are mainly interested in describing the cohomology of the affine flag variety as a subalgebra in $\homa$.\\

As Bruhat operators on the affine nilCoxeter algebra, we have the following relations between the operators $[ij]$.
\begin{enumerate}[(a)]
\item $[ij]^2=0$.\\
\item $[ij][kl]=[kl][ij]$ if $\overline{i},\overline{j},\overline{k},\overline{l}$ are all distinct. \\
\item For $i,j,k$ with distinct residues, $[ij][jk]=[jk][ik]+[ik][ij]$ and $[jk][ij]=[ik][jk]+[ij][ik]$.\\
\item For distinct $i,j$ with $\overline{i}\neq\overline{j}$, $\sum_{\overline{j'}=\overline{j''}=\overline{j}}[ij'][ij'']=0.$\\
\item $[i,j]=[i+n,j+n]$  \\
\end{enumerate}

Note that the relations (a)-(c) are analogous to those in the definition of the Fomin-Kirillov algebra, and proofs for these relations are similar. The relation (d) is an affine type $A$ analogue of the quadratic relation in the bracket algebra which is a generalization of the Fomin-Kirillov algebra to (classical) Coxeter groups \cite{KM03}. The relation (e) is obvious since we have $t_{i,j}=t_{i+n,j+n}$ as elements in the affine symmetric group. The quotient algebra of $\mathcal{E}$ modulo relations (a)-(e) is called the \emph{affine Fomin-Kirillov algebra} $\afk$. 

\begin{rmk}\label{infinitesum}
 In later sections we may need to add infinitely many elements $\x_i$ in $\mathcal{E}$ or in $\afk$. For elements $\x_i$ in $\mathcal{E}$ for $i$ in some index set $\mathcal{I}$, we define the sum $\sum_{i \in\mathcal{I}} \x_i$ as an element in $\mathcal{E}$ if for every $w\in \sn$, all but finitely many $A_w\cdot \x_i$ are zero. 

\end{rmk}
Note that the Bruhat action of $\afk$ on $\mathbb{A}$ is not faithful. One can show that for $x,y,z$ with distinct residues modulo $n$, we have $[xy][yz][xy]=[yz][xy][yz]$ and this relation is not implied by relations (a)-(e). For later use, define $\mathcal{A}'$ to be the quotient algebra of $\mathcal{A}$ modulo relations (a)-(c). \\

The following lemma is useful to prove that certain infinite expressions in $\mathcal{A}$ give a valid action, thus also in $\mathcal{E}$.
\begin{lemma} 
For a positive integer $M$, let $\mathcal{S}_M$ be the subset of $\mathcal{S}$ consisting of elements $[ij]$ with $0<j-i<M$. For an element $\x=\sum_{J,m} a_{J,m}\x_{J,m}$ in $\mathcal{A}$ if for any constant $M$, all but finitely many $a_{J,m}$ for $J\in \prod_{i=1}^m \mathcal{S}_M$ vanish, then the element $\x$ gives a valid action on $\mathbb{A}$.
\end{lemma}
\begin{proof} It is enough to show that the element $\x$ as an action on $A_w$ gives a valid element in $\mathbb{A}$. Note that all $A_v$'s appearing in $A_w \cdot \x$ satisfy $w>v$ where $>$ is the Bruhat order in $\sn$. There are only finitely many chains of Bruhat covers starting from $w$ to $v$ when we identify two covers $v_1\lessdot v_2$ corresponding to indices $(i,j)$ and $(i+n,j+n)$. Therefore, the set consisting of $j-i$ for all indices $(i,j)$ appearing in the Bruhat interval $[v,w]$ has an upper bound. One can set this upper bound to $M$ and apply the hypothesis to make $A_w \cdot \x$ a finite expression. The theorem follows. \end{proof}
\begin{cor}\label{valid}
For an element $\x=\sum_{J,m} a_{J,m}\x_{J,m}$ in $\mathcal{A}$, if there exist constants $a,b$ such that all $[ij]$'s appearing in the expression satisfy $a<j$ and $i<b$ then $\x$ is in $\mathcal{E}$.
\end{cor}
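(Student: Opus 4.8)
The plan is to reduce Corollary~\ref{valid} to the preceding theorem by producing, for each constant $M$, a bound on the number of noncommutative monomials $\x_{J,m}$ with $J \in \prod_{i=1}^m \mathcal{S}_M$ that can appear in the expansion of $\x$ with nonzero coefficient. Given the hypothesis that every bracket $[ij]$ occurring in $\x$ satisfies $a < j$ and $i < b$, I first observe that a bracket $[ij]$ with $i < j$, $a < j$, $i < b$, and moreover $0 < j - i < M$ forces both coordinates into a bounded window: from $i < j < i + M$ and $a < j$ we get $i > a - M$, and from $i < b$ we get $i < b$, so $i \in (a-M, b)$, and then $j \in (a, b+M)$. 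Hence there are at most $(b - a + M)^2$ possible such brackets, a finite set $\mathcal{S}_M \cap \{[ij] : a < j, i < b\}$ depending only on $M$ (and the fixed $a,b$).

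The subtlety is that a priori $\x$ could still involve infinitely many monomials $\x_{J,m}$ of unbounded length $m$, all built from this finite alphabet, so I cannot immediately conclude finiteness of the relevant coefficients. Here is where I would invoke the structure of the Bruhat action rather than the algebra $\mathcal{A}$ alone: in the proof of the preceding theorem it is shown that when acting on a fixed $A_w$, only finitely many chains of Bruhat covers from $w$ downward exist (after identifying covers with index $(i,j)$ and $(i+n,j+n)$), so only monomials of length at most $\ell(w)$ survive. Therefore, to apply the theorem I do not need all but finitely many $a_{J,m}$ with $J \in \prod_{i=1}^m \mathcal{S}_M$ to vanish outright; I need it only after the action truncates monomials to length $\le \ell(w)$. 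So the cleaner route is: fix $M$ and fix $w$; the monomials contributing to $A_w \cdot \x$ have length $\le \ell(w)$ and, among these, the ones in $\prod_{i=1}^m \mathcal{S}_M$ use brackets from the finite set just described, so there are at most $\sum_{m=0}^{\ell(w)} (b-a+M)^{2m}$ of them, a finite number. This is exactly the hypothesis required by the theorem (read locally at each $w$), and the theorem then yields that $\x$ gives a valid action on $\mathbb{A}$, i.e. $\x \in \mathcal{E}$.

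Concretely the steps are: (1) from $a < j$, $i < b$, $0 < j-i < M$ derive $i \in (a-M,b)$ and $j \in (a,b+M)$, giving a finite bracket alphabet of size $\le (b-a+M)^2$; (2) recall from the theorem's proof that acting on $A_w$ kills every monomial of length $> \ell(w)$, and that the indices appearing in covers within a Bruhat interval $[v,w]$ have bounded $j-i$; (3) combine (1) and (2) to bound the number of surviving monomials from $\prod_{i=1}^m \mathcal{S}_M$ acting on any fixed $A_w$, verifying the hypothesis of the theorem; (4) apply the theorem to conclude $\x \in \mathcal{E}$. I expect step~(1), the bracket-window estimate, to be entirely routine arithmetic; the only point that needs a little care is articulating in step~(2)/(3) that the theorem's hypothesis is really a \emph{local} (per-$w$) finiteness condition, so that the possibly unbounded total length of monomials in $\x$ is harmless once the action truncates them. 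That bookkeeping is the main, though modest, obstacle; after that the corollary is immediate.
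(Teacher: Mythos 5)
Your proof is correct and takes essentially the same approach as the paper, whose entire argument is the one-line observation that only finitely many $[ij]$ satisfy $a<j$, $i<b$, $j-i<M$ for fixed $a,b,M$, so the theorem applies. The extra care you take about monomials of unbounded length over this finite bracket alphabet --- resolved by noting that the action on $A_w$ kills every monomial of length greater than $\ell(w)$ --- addresses a point the paper leaves implicit (it is alternatively handled by the inverse-limit topology on $\mathcal{A}$, which excludes such infinite sums from being elements at all), and is a refinement of the same argument rather than a different route.
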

\begin{proof} Since we have only finitely many $[ij]$'s satisfying $a<j,i<b,j-i<M$ for fixed constants $a,b,M$, the corollary follows.\end{proof}

Note that there is a (left) $\sn$- action on $\afk$ (and $\mathcal{E}$) defined by
$$w[ij]=[w(i),w(j)]$$
for $w\in \sn$ and $[ij]\in \mathcal{S}$. Indeed, one can check that the two-sided ideal generated by relations (a)-(e) is invariant under the $\sn$-action.

\section{Dunkl elements and Murnaghan-Nakayama elements}

In this section, we define Dunkl elements and MN elements and investigate identities among these elements.\\
\subsection{Dunkl elements}\label{Dunkl}
For $i\in \mathbb{Z}$, a Dunkl element $\atheta_i$ can be defined in an analogous way to the definition of the Dunkl element in $\fk$ defined in Section \ref{finitefk}.
\begin{definition}
For $i\in \mathbb{Z}$, define a Dunkl element $\atheta_i$ by 
$\sum_{j \in \mathbb{Z},\overline{j}\neq\overline{i}}[ij]$.
\end{definition}
Note that all terms appearing in $\atheta_i$ are either of the form $[ij]$ for $i<j$, or $[ji]$ for $j<i$. Therefore, $\atheta_i$ satisfies the hypothesis in Corollary \ref{valid} with $a=i-1$ and $b=i+1$, thus it gives a valid action on $\mathbb{A}$. \\

In this subsection, we use relations (a)-(e) to prove properties of Dunkl elements.

\begin{thm}\label{comm}
$\atheta_i$'s commute with each other for all $i$. 
\end{thm}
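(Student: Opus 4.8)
The plan is to show that $\atheta_i$ and $\atheta_j$ commute as Bruhat operators by reducing to a finite, combinatorially controlled computation using relations (a)--(e). First I would observe that by Corollary \ref{valid} each $\atheta_i$ lies in $\mathcal{E}$, so the product $\atheta_i\atheta_j$ and the commutator $[\atheta_i,\atheta_j] = \atheta_i\atheta_j - \atheta_j\atheta_i$ are well-defined elements of $\mathcal{E}$; the goal is to verify $[\atheta_i,\atheta_j]=0$ as an operator on $\mathbb{A}$, or better, directly in $\afk$ modulo relations (a)--(e). By the $\sn$-equivariance of the relations and the fact that translations act transitively on residues, it suffices to treat the two genuinely distinct cases: $\overline{i}=\overline{j}$ (but $i\neq j$) and $\overline{i}\neq\overline{j}$. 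Actually the case $\overline{i}=\overline{j}$ reduces further: if $j=i+n$ then $[ij']=[i+n,j'+n]$ lets one match terms, and the commutator collapses; the substantive case is $\overline{i}\neq\overline{j}$.

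\textbf{Key steps.} Expand $\atheta_i\atheta_j = \sum_{\overline{k}\neq\overline{i}}\sum_{\overline{l}\neq\overline{j}}[ik][jl]$ and sort the terms by how many of the residues $\overline{i},\overline{j},\overline{k},\overline{l}$ coincide. The terms where all four residues are distinct are symmetric in the two factors by relation (b), hence cancel in the commutator. The remaining terms are those where $\overline{k}\in\{\overline{i},\overline{j}\}$ or $\overline{l}\in\{\overline{i},\overline{j}\}$ (the case $\overline{k}=\overline{i}$ is excluded, likewise $\overline{l}=\overline{j}$, so effectively $\overline{k}=\overline{j}$, $\overline{l}=\overline{i}$, or $\overline{k}=\overline{l}$ equal to a new residue that collides): these split into a piece involving only the two residues $\overline{i},\overline{j}$ and a piece involving a third residue $\overline{m}\notin\{\overline{i},\overline{j}\}$. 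For the three-residue piece, group the terms of $\atheta_i\atheta_j - \atheta_j\atheta_i$ that involve a fixed third residue $\overline{m}$ and apply the triangle relation (c) together with (e); the idea is that the three-residue contributions telescope across all representatives $m$ with $\overline{m}$ fixed and the net contribution vanishes — this is where relation (d), the affine analogue of the bracket-algebra quadratic relation, does the real work, since (d) is exactly the identity $\sum_{\overline{i'}=\overline{i},\overline{j'}=\overline{j}}[ij'][ji']=0$ that kills the residual two-residue sums. For the two-residue piece, use (a) and (d) directly.

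\textbf{Main obstacle.} The hard part will be organizing the infinite sums so that the cancellations are legitimate, i.e., making precise that regrouping $\sum_{\overline{k}\neq\overline{i}}\sum_{\overline{l}\neq\overline{j}}$ into residue classes and then summing over representatives within a class is valid inside $\mathcal{E}$ (equivalently, that on any fixed $A_w$ only finitely many terms act nontrivially and the regrouping matches the finite computation). I would handle this by the same mechanism as in the theorem preceding Corollary \ref{valid}: fix $w$, note that only indices $(p,q)$ with $q-p$ bounded and $p,q$ in a bounded window contribute to $A_w\cdot[\atheta_i,\atheta_j]$, so the commutator restricted to $A_w$ is a finite sum of noncommutative monomials to which relations (a)--(e) apply termwise. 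Then the combinatorial heart is the identity, for each pair of residues and each third residue, that the relevant finite collection of length-two monomials sums to zero modulo (a)--(e); this mirrors the corresponding statement for $\fk$ recorded in Section \ref{finitefk}, and relation (d) supplies precisely the extra cancellation needed in the affine setting because residue classes are infinite. A secondary subtlety is bookkeeping the sign conventions $[ij]=-[ji]$ when $k<i$ or $l<j$, but this is routine once the residue-class grouping is set up.
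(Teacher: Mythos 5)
Your proposal follows essentially the same route as the paper: the commutator is decomposed according to the cardinality of the residue support of each quadratic monomial $[ix][jy]$, with relation (b) handling four distinct residues, the triangle relation (c) handling three, and relation (d) handling the two-residue terms. The only slight misattribution is that in the paper the three-residue contributions cancel using (c) alone, via the identity $[ix][jx]+[ij][jx]+[ix][ji]=[jx][ix]+[jx][ij]+[ji][ix]$, with (d) reserved entirely for the support $\{\overline{i},\overline{j}\}$; otherwise your plan, including the reduction to distinct residues and the care about regrouping the infinite sums inside $\mathcal{E}$, matches the paper's argument.
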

\begin{proof} For fixed $i,j$ with distinct residues, there are 3 cases for terms appearing in $\atheta_i\atheta_j$. For each term $[ix][jy]$ (or $[jy][ix]$), let the support of the term be the set $\{i,x,j,y\}$ modulo $n$. If the cardinality of the support is 4, then we have $[ix][jy]=[jy][ix]$ by the relation (a). Hence the restriction of $\atheta_i\atheta_j-\atheta_j\atheta_i$ on any support with cardinality 4 vanishes.\\
If the cardinality of the support is 3, $\atheta_i\atheta_j-\atheta_j\atheta_i$ vanishes on the support $\{i,j,x\}$ from the identity
$$[ix][jx]+[ij][jx]+[ix][ji]=[jx][ix]+[jx][ij]+[ji][ix].$$
The above identity follows from the relation (b).\\
If the cardinality of the support is 2, the support must be $\{i,j\}$ modulo $n$. Then $\atheta_i\atheta_j-\atheta_j\atheta_i$ vanishes on the support by the relation (d).\end{proof}

The following lemma is useful to derive additional identities among $\atheta_i$.
\begin{lemma}\label{cycliceq}
Let $a,b_1,\ldots,b_m$ be distinct integers modulo $n$. Then we have
\begin{align*}
\sum& \Big([a b_1][a b_2]\ldots[a b_m] [a b'_1]+ [a b_2][a b_3]\ldots[a b_m][a b_1][a b'_2]\\
+&\cdots+[a b_m][a b_1]\ldots [a b_{m-1}][a b'_m]\Big)=0
\end{align*}
where the sum is over all integers $b'_i$ congruent to $b_i$ modulo $n$.
\end{lemma}
\begin{proof} We omit the proof since it is a simple generalization of \cite[Lemma 7.2]{FK99}, proved by induction on $m$. For $m=1$, the theorem follows from the relations (d).\end{proof}

We have the following corollaries of Lemma \ref{cycliceq}, which will be frequently used later.
\begin{cor}\label{distinctres}
$$\atheta_i^m=\sum [i a_1] \ldots [i a_m]$$
where the sum is over all integers $a_1,\ldots , a_m$ such that $a_1,\ldots,a_m,i$ have distinct residues modulo $n$.
\end{cor}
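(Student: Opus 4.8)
The plan is to apply Theorem \ref{cycliceq} iteratively, building up the power $\atheta_i^m$ one factor at a time and using the cyclic identity to cancel all terms in which two indices share a residue. First I would set up the induction on $m$: the base case $m=1$ is just the definition $\atheta_i = \sum_{\overline{j}\neq\overline{i}}[ij]$, which is already of the claimed form. For the inductive step, suppose $\atheta_i^{m-1} = \sum [ia_1]\cdots[ia_{m-1}]$ where the sum is over tuples with $a_1,\dots,a_{m-1},i$ of pairwise distinct residues. Multiplying on the right by $\atheta_i = \sum_{\overline{b}\neq\overline{i}}[ib]$ gives
\begin{align*}
\atheta_i^m = \sum [ia_1]\cdots[ia_{m-1}][ib],
\end{align*}
where now the sum is over all $a_1,\dots,a_{m-1}$ with distinct residues (distinct also from $\overline{i}$) and all $b$ with $\overline{b}\neq\overline{i}$, but $\overline{b}$ is allowed to coincide with one of $\overline{a_1},\dots,\overline{a_{m-1}}$.

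The heart of the argument is to show that the "bad" terms, those in which $\overline{b} = \overline{a_t}$ for some $t \leq m-1$, sum to zero. Fix the residues $\overline{a_1},\dots,\overline{a_{m-1}}$ and the particular index $a_1,\dots,a_{m-1}$ (the integer lifts), and fix which position $t$ has $\overline{b}=\overline{a_t}$. I would group terms according to the residue class $r := \overline{a_t}$ and sum over all integer lifts $b$ with $\overline{b}=r$. Using relation (c) repeatedly to move the factor $[ib]$ leftwards past $[ia_{m-1}],\dots,[ia_{t+1}]$ — each such move, since $\overline{i},\overline{a_s},\overline{b}$ need not all be distinct only when $\overline{a_s}=\overline{b}$, which does not happen for $s>t$ as those residues are distinct — one rewrites the sum over lifts $b$ as (a sum of terms each matching) the left-hand side of Theorem \ref{cycliceq} applied with $a = i$ and $\{b_1,\dots\}$ an appropriate subset of residues, hence it vanishes. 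The key observation making this work is that after commuting $[ib]$ into place, the relevant sub-sum over lifts of a single residue is exactly the "cyclic sum" that Theorem \ref{cycliceq} annihilates.

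I expect the main obstacle to be the bookkeeping in the previous paragraph: precisely, organizing the terms where $\overline{b}$ equals $\overline{a_t}$ into the pattern $[ab_1]\cdots[ab_m][ab_1'] + \cdots$ that appears in Theorem \ref{cycliceq}, since moving $[ib]$ through the word via relation (c) produces a sum of two terms at each step (one of the form $[ik][ij]$ and one $[ik][ik']$-type rearrangement), so one must check that all the spawned terms either reassemble into another instance of the cyclic identity or cancel in pairs. A cleaner route, which I would pursue if the direct manipulation gets unwieldy, is to invoke the $m=1$ case of Theorem \ref{cycliceq} — namely $\sum_{\overline{b}=\overline{a_t}}[ia_t][ib] = 0$ after the relevant factors have been brought adjacent — together with Theorem \ref{comm}-style commutation arguments to reduce the general bad term to this base identity. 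Once all bad terms are shown to vanish, what remains is precisely the sum over tuples $a_1,\dots,a_m$ with $a_1,\dots,a_m,i$ of pairwise distinct residues, completing the induction.
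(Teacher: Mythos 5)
Your overall strategy --- induct on $m$ and use Theorem \ref{cycliceq} to cancel the terms of $\atheta_i^{m-1}\atheta_i$ in which $\overline{b}$ repeats an earlier residue --- is the right one, and is what the paper intends (the corollary is stated without proof as a consequence of Theorem \ref{cycliceq}). But your cancellation step is set up with the wrong grouping, and the proposed repair does not work. You fix the integer lifts $a_1,\dots,a_{m-1}$ and the position $t$ with $\overline{b}=\overline{a_t}$, and claim the sub-sum over the lifts $b$ alone vanishes. Theorem \ref{cycliceq} does not say this: its left-hand side is a sum over the $m-t$ cyclic rotations of the suffix $(a_t,\dots,a_{m-1})$, each rotation contributing its own sum over lifts of its leading index, and only the total over all rotations vanishes. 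For $t<m-1$ the single-rotation sub-sum you isolate is, by that theorem, equal to \emph{minus} the sum of the other rotations' contributions, not to zero; only for $t=m-1$, where the rotation orbit is a singleton, does your sub-sum vanish outright (the $m'=1$ case of Theorem \ref{cycliceq}).

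The fallback of commuting $[ib]$ leftward until it sits next to $[ia_t]$ also fails: $[ib]$ and $[ia_s]$ share the residue $\overline{i}$, so relation (b) does not apply, and relation (c) rewrites $[ia_s][ib]$ as a sum involving the generator $[a_s b]$, which is no longer of the form $[i\,\cdot\,]$ and hence falls outside the scope of Theorem \ref{cycliceq} entirely. The correct bookkeeping is coarser and matches the shape of Theorem \ref{cycliceq} exactly: for fixed $t$ and fixed prefix $[ia_1]\cdots[ia_{t-1}]$, group together the $m-t$ bad terms whose suffixes arise by cyclically rotating $(a_t,\dots,a_{m-1})$ and appending a lift of the leading index of the rotation --- each such rotation is again a legitimate term of $\atheta_i^{m-1}\atheta_i$ with repeat position $t$, so this set of bad terms is closed under the rotation. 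Theorem \ref{cycliceq} with $a=i$ and $(b_1,\dots,b_{m-t})=(a_t,\dots,a_{m-1})$ then annihilates each orbit, and summing over orbits and over $t$ kills all bad terms. With this regrouping your induction goes through.
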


\begin{cor}\label{vanish}
If $m\geq n$, then $\atheta_i^m=0$ for all $i$.
\end{cor}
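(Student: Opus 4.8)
The plan is to derive Corollary~\ref{vanish} directly from Corollary~\ref{distinctres}. By Corollary~\ref{distinctres}, $\atheta_i^m = \sum [ia_1][ia_2]\cdots[ia_m]$, where the sum runs over all tuples $(a_1,\ldots,a_m)$ of integers whose residues, together with $\overline{i}$, are pairwise distinct modulo $n$. If $m \geq n$, then any such tuple would require $m+1$ pairwise distinct residues modulo $n$, but there are only $n$ residue classes available. Hence the index set of the sum is empty, so $\atheta_i^m = 0$.

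First I would invoke Corollary~\ref{distinctres} verbatim to write down the monomial expansion of $\atheta_i^m$. Then I would observe the pigeonhole obstruction: the condition ``$a_1,\ldots,a_m,i$ have distinct residues modulo $n$'' forces $m+1 \leq n$, i.e.\ $m \leq n-1 = k$. Since we assume $m \geq n > k$, no tuple satisfies the condition, the sum is empty, and the result follows. For $m > n$ this also follows from the case $m = n$ together with $\atheta_i^m = \atheta_i^{m-n}\cdot\atheta_i^n = 0$, but it is cleaner to just apply the pigeonhole argument uniformly for all $m \geq n$.

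There is essentially no obstacle here: the entire content is the counting observation that one cannot choose more than $n$ residues that are pairwise distinct modulo $n$. The only thing to be careful about is that Corollary~\ref{distinctres} is stated for a power $\atheta_i^m$ with $m$ a positive integer and does allow arbitrary $m$, so its right-hand side is genuinely an empty sum (hence $0$ in $\afk$, and also $0$ as a Bruhat operator in $\homa$) precisely when $m \geq n$. I would state the proof in one or two sentences.

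\begin{proof}
By Corollary~\ref{distinctres}, $\atheta_i^m$ is a sum of noncommutative monomials $[ia_1]\cdots[ia_m]$ indexed by tuples $(a_1,\ldots,a_m)$ such that $a_1,\ldots,a_m,i$ have pairwise distinct residues modulo $n$. Such a tuple requires $m+1$ pairwise distinct residues modulo $n$, which is impossible when $m \geq n$. Hence the sum is empty and $\atheta_i^m=0$.
\end{proof}
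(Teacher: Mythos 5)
Your proof is correct and matches the paper's (implicit) argument: the paper presents this as an immediate corollary of Corollary~\ref{distinctres}, and the pigeonhole observation that $m+1$ pairwise distinct residues modulo $n$ cannot exist when $m\geq n$ is exactly the intended reasoning.
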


From now on, we will not need to use the relations (d) and (e) in the definition of $\afk$ after using Corollaries \ref{distinctres}, \ref{vanish}. 

\subsection{Murnaghan-Nakayama elements}\label{MN}

We define Murnaghan-Nakayama elements $\p_m(i)$ (MN elements in short) in $\afk$ as a generalization of $\theta_1^m+\cdots+\theta_i^m$ in $\fk$. Unlike the finite case, MN elements are not generated by Dunkl elements $\atheta_i$. We define MN elements by investigating the combinatorics of the Fomin-Kirillov algebra studied by M\'{e}sz\'{a}ros, Panova, Postnikov \cite{MPP14} and generalizing them to affine case. \\

Let $\mathcal{D}$ be the 2-dimensional infinite grid. A \emph{box} is specified by its position $(i,j)$ when the vertices of the box are $(i,j),(i,j+1),(i+1,j),(i+1,j+1)$. Let $\mathcal{D}_a$ be the set of all boxes at $(i,j)$ with $i\leq a <j$. A \emph{diagram} $D$ on $\mathcal{D}_a$ is a finite collection of boxes in $\mathcal{D}_a$. For a diagram $D$ on $\mathcal{D}_a$, we associate a graph with the vertex set $\mathbb{Z}$ obtained by adding an edge between $i$ and $j$ for each box at $(i,j)$ in $D$, then remove all vertices that are not adjacent to any edge. We say that a diagram $D$ is a \emph{connected tree} if the associated graph is a tree, and all vertices in the tree have distinct residues modulo $n$. Let $\supp(D)$ be the set consisting of indices of all vertices in the single tree in the associated graph for $D$ and $c(D)$ the number of vertices $(i,j)$ in the tree with $i\leq a$. See Figure 1 for an example of a connected tree and the associated graph. Note that the box at $(i,i+np)$ does not appear in a connected tree for any $i, p\in\mathbb{Z}$. \\

A \emph{labeling} $D_L$ on a diagram $D$ is a bijection from a set $\{1,2,\ldots,|D|\}$ to the set of boxes in $D$. For a labeling $L$ of a connected tree $D$, one can associate an element in the affine FK algebra defined by $x_{D_L}=[D_L(1)][D_L(2)]\ldots[D_L(|D|)]$ where $[D_L(i)]$ is $[a_i b_i]$ for the $i$-th box placed at $(a_i,b_i)$. We call two labelings $L$ and $L'$ equivalent if we have $x_{D_L}=x_{D_{L'}}$ by only using commutation relations.\\

\begin{center}
\begin{figure}
\includegraphics[width=4.8in]{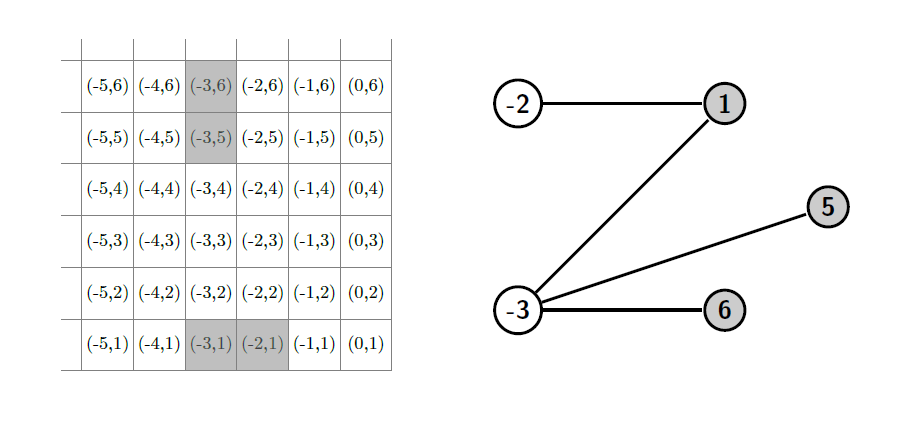}
\caption{A connected tree on $D_0$ for $n=6$. We color the box gray when the box is in the diagram. Note that the associated graph of a connected tree on $D_0$ is bipartite so that we color vertices with positive numbers gray and vertices with non-positive numbers white. We have $\supp(D)=\{-3,-2,1,5,6\}$ and $c(D)$ is $2$, the number of white vertices.}
\end{figure}
\end{center}

The following lemma is an obvious generalization of \cite[Lemma 7]{MPP14}.
\begin{lem}\label{postnikov}
Let $v,l$ be positive integers and $D$ be a connected tree in $\mathcal{D}_a$ with $l+v$ boxes contained in $l$ rows and $v+1$ columns. Then the following two sets are equal:
\begin{enumerate}
\item The equivalent classes of labelings of $D$ such that the class contains a labeling $(i_1,j_1), \ldots, (i_{l+v},j_{l+v})$ with: \\
$i_1,\ldots,i_l$ are distinct, $j_1\leq\cdots,\leq j_l$, $j_{l+1},\ldots,j_{l+v}$ are distinct, $i_{l+1}\leq \cdots \leq i_{l+v}$.

\item The equivalent classes of labelings of $D$ such that the class contains a labeling $(i_1,j_1), \ldots, (i_{l+v},j_{l+v})$ with: \\
$i_1,\ldots,i_{l-1}$ are distinct, $j_1\leq\cdots,\leq j_{l-1}$, $j_{l},\ldots,j_{l+v}$ are distinct, $i_{l}\leq \cdots \leq i_{l+v}$.
\end{enumerate}
\end{lem}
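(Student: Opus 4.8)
The plan is to reduce Lemma~\ref{postnikov} to its non-affine predecessor \cite[Lemma 7]{MPP14} by a localization argument, exploiting the fact that the two labeling conditions concern only the row/column coordinates of a fixed connected tree $D$ living in $\mathcal{D}_a$, and that being a connected tree forces the vertices of $D$ to have distinct residues modulo $n$. First I would observe that since $D$ is a fixed finite diagram, all boxes of $D$ lie in finitely many rows and columns, say with vertex set $V = \supp(D) \subset \mathbb{Z}$; because these integers have pairwise distinct residues modulo $n$, the relevant relations among the generators $[ij]$ with $i,j \in V$ are exactly the relations (a), (b), (c) of the Fomin--Kirillov algebra $\fk$ restricted to the finite index set $V$ — relations (d) and (e) never enter, because any two indices occurring in a connected tree are genuinely distinct, not merely congruent. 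So the sub-question about which labelings of $D$ are equivalent under commutation is literally the same combinatorial question studied by M\'esz\'aros, Panova, and Postnikov, just with the linearly ordered index set $\{1,2,\ldots,n\}$ replaced by an arbitrary finite linearly ordered set $V$. Relabeling $V$ order-isomorphically to an initial segment of $\mathbb{Z}_{>0}$ identifies our setup with theirs verbatim.

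The key steps, in order, would be: (1) fix the connected tree $D$ with $l+v$ boxes in $l$ rows and $v+1$ columns, and record that the equivalence classes of labelings under the commutation relation depend only on the underlying poset structure of the boxes (two boxes commute iff their four defining coordinates are all distinct, iff the corresponding edges of the tree share no vertex); (2) note that the combinatorial conditions in (1) and (2) of the lemma — the distinctness of certain $i$-coordinates, the weak monotonicity of certain $j$-coordinates, and so on — are phrased purely in terms of these coordinates and are insensitive to the ambient affine structure; (3) invoke the bijection of \cite[Lemma 7]{MPP14} between the two families of labeling classes, which is constructed by a local move on the labeling that swaps the roles of row $l$ and column positions, transporting a labeling of the first type to one of the second type and back; (4) check that this local move respects the connected-tree condition and the distinct-residues condition, which is automatic since the move does not change $D$ itself, only the labeling $L$. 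The conclusion that the two sets of classes coincide then follows.

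The main obstacle — really the only place where affine-ness could in principle bite — is Step (4): one must make sure that nothing in the MPP argument secretly uses that the index set is a bounded interval $\{1,\ldots,n\}$ rather than an arbitrary finite subset of $\mathbb{Z}$, and in particular that the bijection never tries to ``wrap around'' or introduce an index outside $\supp(D)$. Since the bijection of \cite[Lemma 7]{MPP14} is described entirely by reindexing the labels $1,\ldots,l+v$ attached to the already-fixed boxes of $D$ — it permutes labels, it does not create or move boxes — this is immediate, and it is exactly why the lemma is, as asserted, ``an obvious generalization.'' I would therefore present the proof as: the statement is local to the finite diagram $D$, whose vertices have distinct residues, so relations (d), (e) are vacuous on $\supp(D)$ and we are in the setting of $\fk$ with index set $\supp(D)$; applying \cite[Lemma 7]{MPP14} after an order-isomorphism $\supp(D) \cong \{1,\ldots,|\supp(D)|\}$ gives the result. $\qed$
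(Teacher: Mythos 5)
Your reduction to \cite[Lemma 7]{MPP14} is correct and is essentially the route the paper takes: the paper in fact offers no proof at all, merely asserting that the lemma is ``an obvious generalization'' of the MPP14 result, so your localization argument --- the statement concerns a single fixed finite diagram whose vertices have distinct residues modulo $n$, the equivalence of labelings uses only the commutation relation (so relations (d) and (e) are vacuous on $\supp(D)$), and an order-isomorphism of $\supp(D)$ onto an initial segment of the positive integers transports the whole setup verbatim into the finite Fomin--Kirillov setting --- supplies exactly the details the paper leaves implicit. No gap remains.
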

Let $M(D)=\{D_{L_1},\ldots,D_{L_h}\}$ be the set of representative labelings of equivalent classes in Lemma \ref{postnikov}. See Figure 2 for an example of a labeling satisfying Lemma \ref{postnikov}.

\begin{center}
\begin{figure}
\includegraphics[width=2.5in]{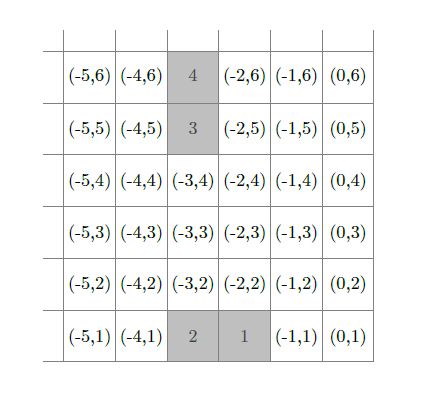}
\caption{The labeling of the gray diagram satisfies  Lemma \ref{postnikov} with $l=3,v=1$. We have $x_{D_L}=[-2,1][-3,1][-3,5][-3,6]$.}
\end{figure}
\end{center}

\begin{definition}\label{p}
Let $m$ and $a$ be positive integers. Define $\p_m(a)$ in $\afk$ by
$$\p_m(a)=\sum_{D\in \mathcal{D}_a}\sum_{D_L \in M(D)} (-1)^{c(D)-1}x_{D_L}$$
where the first sum runs over all connected trees in $\mathcal{D}_i$.

\end{definition}
Since all $[ij]$'s appearing in $\p_m(a)$ satisfy $i\leq a<j$, $\p_m(a)$ gives a valid action on the affine nilCoxeter algebra by Corollary \ref{valid}.
We denote $\p_m(0)$ by $\p_m$.
\begin{rmk}\label{pinf}
Roughly speaking, $\p_m(i)$ behaves similarly to the infinite summation $\sum_{j=-\infty}^i \atheta_j^m$. The expression $\sum_{j=-\infty}^i \atheta_j^m$ is not well-defined since we have relations $\atheta_i=\atheta_{i+n}$ and $\sum_{i=0}^{n-1} \atheta_i=0$ if we use all relations (a)-(e) defining affine Fomin-Kirillov algebra. Recall that $\mathcal{A}'$ is the quotient algebra of $\mathcal{A}$ modulo relations (a)-(c). Then $\p_m(a)$ as an element of $\mathcal{A}'$ is the same as 
$$\sum_{i=-\infty}^a \sum [i a_{i,1}] \ldots [i a_{i,m}]=\lim_{j\rightarrow -\infty}\sum_{i=j}^a \sum [i a_{i,1}] \ldots [i a_{i,m}]$$
where the second sum is over all integers $a_{i,1},\ldots a_{i,m}$ such that $a_{i,1},\ldots,a_{i,m},i$ have distinct residues modulo $n$.
Here we use $\sum [i a_{i,1}] \ldots [i a_{i,m}]$ instead of $\atheta_i^m$. However, the commutativity between $\p_m(a)$ and $\atheta_i$ does not follow if we do not use the relations (d)-(e). In fact, $\atheta_i$'s does not commute without the relation $(d)$.

\end{rmk}

There is a surjection from the subalgebra of $\mathcal{A}'$generated by $\atheta_i$ and $\p_m$ to the subalgebra of the finite FK algebra generated by Dunkl elements . For a fixed set $T=(a_1<\cdots<a_p)$ containing distinct integers modulo $n$, one can define the projection $r_T:\mathcal{A}'\rightarrow \mathcal{A}'$ defined by $[ij]\mapsto [ij]$ if $\{i,j\}\subset T$ and $0$ otherwise. For $T=\{1,2,\ldots,n\}$, this is the canonical surjection from $\mathcal{A}'$ to $\fk$. \\
One can check that $r_T(\atheta_i)=\theta_i$ for $1\leq i \leq n$. The restriction of $\p_m(i)$ to finite Fomin-Kirillov algebra via $r_T$ is the same as $p_m(\theta_1,\ldots,\theta_i)=\theta_1^m+\cdots+\theta_i^m$. Indeed, M\'{e}sz\'{a}ros, Panova and Postnikov gave a (positive) expression of the Schur function $s_\lambda(\theta_1,\ldots,\theta_i)$ in $\theta_1,\cdots,\theta_i$ for the hook shape $\lambda$ in \cite{MPP14}, and one can deduce the expression for $\theta_1^m+\cdots+\theta_i^m$ from the well-known identity
$$p_m=\sum_{i=0}^{m-1} (-1)^i s_{(m-i,1^i)}.$$

\subsection{More identities among Dunkl elements and MN elements.}
For $p<n$, let $T=(a_1<\cdots<a_p)$ be a set of distinct integers with distinct residues modulo $n$. Let $f_T$ be the injection from $[n]$ to $\mathbb{Z}$ defined by $f_T(i)=a_i$.
Then for such $f_T$, there is an injection from $\fk$ to $\mathcal{A}'$ defined by
$$[ij]\in\fk \mapsto [f_T(i)f_T(j)] \in \mathcal{A}'.$$
Since all relations among $[ij]\in \fk$ follow from relations (a)-(c) in $\afk$, this map is well-defined. Therefore, all relations between elements in $\fk$ also hold in $\mathcal{A}'$ and $\afk$ after applying the map $f_T$.

\begin{thm}\label{p+}
For $m<n, i \in \mathbb{Z}$, we have
$$\p_m(i-1)+\atheta_{i}^m=\p_m(i)$$
in $\afk$.
\end{thm}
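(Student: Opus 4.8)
The plan is to work in the quotient algebra $\mathcal{A}'$ (modulo relations (a)--(c)) and compare the two sides of $\p_m(i)+\atheta_{i+1}^m=\p_m(i+1)$ as elements of $\mathcal{A}'$, using the combinatorial description of $\p_m(a)$ from Definition \ref{p} together with Remark \ref{pinf}. By Remark \ref{pinf}, as an element of $\mathcal{A}'$ we have
$$\p_m(a)=\sum_{j=-\infty}^{a}\sum [j\,a_{j,1}]\cdots[j\,a_{j,m}],$$
the inner sum running over all integers $a_{j,1},\ldots,a_{j,m}$ so that $a_{j,1},\ldots,a_{j,m},j$ have distinct residues modulo $n$, and by Corollary \ref{distinctres} the inner sum is precisely $\atheta_j^m$ computed in $\mathcal{A}'$. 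Hence telescoping gives $\p_m(i+1)-\p_m(i)=\atheta_{i+1}^m$ at the level of $\mathcal{A}'$, which is just a reindexing of the defining sums; the content is showing that this identity, which holds in $\mathcal{A}'$, descends to a genuine identity of Bruhat operators on $\mathbb{A}$ and of elements in $\afk$. Since $\p_m(i)$, $\p_m(i+1)$, $\atheta_{i+1}^m$ all satisfy the hypothesis of Corollary \ref{valid} (all brackets $[jk]$ occurring have $j\le i+1<k$ bounded appropriately), each gives a valid action on $\mathbb{A}$, and the relations (d), (e) are no longer needed once Corollary \ref{distinctres} has been invoked, as remarked after Corollary \ref{vanish}. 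So the identity holds in $\afk$ as soon as it holds in $\mathcal{A}'$ among the valid-action elements.

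The step requiring actual work is matching the \emph{labeled-diagram} presentation of $\p_m(a)$ in Definition \ref{p} with the ``naive'' presentation $\sum_{j\le a}\atheta_j^m$ of Remark \ref{pinf}. Concretely, for each $j\le a$ the monomials in $\atheta_j^m$ are products $[j\,a_{j,1}]\cdots[j\,a_{j,m}]$ with $j,a_{j,1},\ldots,a_{j,m}$ of distinct residues; grouping these by the underlying set of rows and columns used produces connected trees $D\in\mathcal{D}_a$ (the vertex $j$ is the unique vertex of index $\le a$ in the ``star'' case), and the equivalence classes $M(D)$ of Lemma \ref{postnikov} together with the sign $(-1)^{c(D)-1}$ are exactly what is needed to collapse the sum $\sum_j \atheta_j^m$ into a sum over connected trees with $c(D)\ge 1$. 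This is the generalization of \cite[Lemma 7]{MPP14} to the affine setting and is essentially the content of the discussion following Definition \ref{p}; I would either invoke it directly or spell out the bijection between $M$-classes of trees contributing to $\p_m(a)$ and $M$-classes of trees contributing to $\p_m(a+1)\setminus\p_m(a)$, which are exactly the trees all of whose vertices except one lie strictly above row $a$, i.e. the ones accounting for $\atheta_{a+1}^m$.

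Putting it together, the proof runs: (1) pass to $\mathcal{A}'$, where relations (a)--(c) suffice and Corollary \ref{distinctres} applies; (2) rewrite $\p_m(i)$ and $\p_m(i+1)$ via Remark \ref{pinf} as $\lim_{N\to-\infty}\sum_{j=N}^{i}\atheta_j^m$ and $\lim_{N\to-\infty}\sum_{j=N}^{i+1}\atheta_j^m$ respectively, these limits being well-defined as actions on $\mathbb{A}$ by Corollary \ref{valid}; (3) subtract to get $\p_m(i+1)-\p_m(i)=\atheta_{i+1}^m$ in $\mathcal{A}'$ acting on $\mathbb{A}$; (4) observe all four elements lie in $\mathcal{E}$ and the identity is preserved under the quotient $\mathcal{E}\to\afk$, since the extra relations (d), (e) impose no further collapse among these particular elements. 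The main obstacle, as noted, is step (2): justifying rigorously that the diagram sum of Definition \ref{p} equals the telescoping sum of star-monomials, i.e. checking that the sign $(-1)^{c(D)-1}$ and the class set $M(D)$ from Lemma \ref{postnikov} are precisely bookkeeping for the passage from $\sum_j\atheta_j^m$ (with overlapping monomials across different $j$ when the trees are not stars) to the connected-tree expansion. Once Lemma \ref{postnikov} is granted in the form stated, this matching is a finite combinatorial check on each fixed support, and the theorem follows.
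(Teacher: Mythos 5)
Your proposal is correct and follows essentially the same route as the paper: both first use Corollary \ref{distinctres} to replace $\atheta_{i+1}^m$ by its star-monomial expansion (the only place relations (d)--(e) enter), then prove the resulting identity in $\mathcal{A}'$ by restricting to each fixed support via $r_T$ and invoking the M\'esz\'aros--Panova--Postnikov expansion of $\theta_1^m+\cdots+\theta_c^m$ in the finite Fomin--Kirillov algebra (Lemma \ref{postnikov}), which is exactly your ``finite combinatorial check on each fixed support.'' One small caution: your parenthetical claim that the trees contributing to $\p_m(i+1)$ but not to $\p_m(i)$ are exactly the stars at $i+1$ is false at the level of diagrams (a tree in $\mathcal{D}_i$ may use $i+1$ as a column vertex while a tree in $\mathcal{D}_{i+1}$ may use it as a row vertex, so the identity is not term-by-term for $m\ge 2$), but you correctly back off from this and locate the real content in the support-by-support matching, which is precisely what the paper's proof does.
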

\begin{proof} By Corollary \ref{distinctres} and Remark \ref{infinitesum}, it is enough to show that $\p_m(i-1)+\sum [i a_1] \ldots [i a_m]=\p_m(i)$ where the sum is over all integers $a_1,\ldots,a_m,i$ having distinct residues modulo $n$. In fact, we will show this identity in $\mathcal{A}'$.\\
For a fixed support $T=(a_1<\cdots<a_p)$, the restriction of the identity via $r_T$ is 
\begin{equation}\label{rt} r_T(\p_m(i-1))+\sum [i b_1] \ldots [i b_m]=r_T(\p_m(i))\end{equation}
on the support $T$, where the sum runs over all $\{i,b_1,\cdots,b_m\}=T$ with $m+1=p$. The identity (\ref{rt}) is the image of the identity $(\theta_1^m+\cdots+\theta_{i-1}^m)+\theta_{i}^m= (\theta_1^m+\cdots+\theta_i^m)$ via the map $f_T$, therefore the identity holds for each support $T$. After taking the summation of the identity (\ref{rt}) for all possible supports $T$, the theorem follows.\end{proof}

One can apply Theorem \ref{p+} to prove the following theorems.

\begin{thm} \label{sym0} For $m>0$, we have 
$$\sum_{i=1}^{n} \atheta_i^m=0.$$
\end{thm}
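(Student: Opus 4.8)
The plan is to leverage the periodicity of the Dunkl elements together with the telescoping identity of Theorem~\ref{p+}. Recall that Theorem~\ref{p+} gives $\p_m(i)+\atheta_{i+1}^m=\p_m(i+1)$ in $\afk$ for $m<n$, and that relation (e) combined with the $\sn$-action yields $\atheta_{i+n}=\atheta_i$ (shifting all indices by $n$ in $\atheta_i=\sum_{\overline{j}\neq\overline{i}}[ij]$ sends it to itself by relation (e)). Hence $\sum_{i=1}^n\atheta_i^m=\sum_{i=i_0+1}^{i_0+n}\atheta_i^m$ for any base point $i_0$, i.e. the sum of $m$-th powers over any window of $n$ consecutive Dunkl elements is independent of the window.

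First I would write, using Theorem~\ref{p+} telescopically,
\[
\sum_{i=i_0+1}^{i_0+n}\atheta_i^m=\sum_{i=i_0+1}^{i_0+n}\bigl(\p_m(i)-\p_m(i-1)\bigr)=\p_m(i_0+n)-\p_m(i_0).
\]
Then I would argue that $\p_m(i_0+n)=\p_m(i_0)$ in $\afk$. This should follow from Definition~\ref{p}: the defining sum for $\p_m(a)$ runs over connected trees in $\mathcal{D}_a$, and the bijection of diagrams $D\mapsto D+(n,n)$ (translating every box at $(i,j)$ to $(i+n,j+n)$) carries connected trees in $\mathcal{D}_{a}$ to connected trees in $\mathcal{D}_{a+n}$, preserves the statistic $c(D)$ and the equivalence classes $M(D)$, and by relation (e) sends $x_{D_L}$ to itself in $\afk$. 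Therefore $\p_m(i_0+n)=\p_m(i_0)$, and the displayed sum vanishes; taking $i_0=0$ gives $\sum_{i=1}^n\atheta_i^m=0$.

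I expect the main obstacle to be the careful justification that $\p_m(a)$ is genuinely periodic in $a$ with period $n$ as an element of $\afk$ — one must check that the translation-by-$(n,n)$ map on diagrams is a well-defined bijection on connected trees (the condition ``all vertices have distinct residues mod $n$'' and the exclusion of boxes at $(i,i+np)$ are translation-invariant), that it commutes with the labeling/equivalence bookkeeping defining $M(D)$, and that relation (e) really does identify the translated noncommutative monomial with the original one in $\afk$. An alternative, perhaps cleaner, route is to work in $\mathcal{A}'$ using the description of $\p_m(a)$ from Remark~\ref{pinf} as $\lim_{j\to-\infty}\sum_{i=j}^a\sum[i\,a_{i,1}]\cdots[i\,a_{i,m}]$: there the shift $a\mapsto a+n$ together with relation (e) makes the periodicity transparent, and then one descends to $\afk$. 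Either way, once periodicity of $\p_m$ is in hand the theorem is immediate from the telescoping computation above.
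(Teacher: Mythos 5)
Your proof is correct and is essentially the paper's own argument: the paper also telescopes Theorem~\ref{p+} to reduce the claim to $\p_m(0)=\p_m(n)$, which it attributes to relation (e) exactly as you do. Your extra care in verifying that the translation-by-$(n,n)$ bijection on connected trees respects $c(D)$ and $M(D)$ is a worthwhile elaboration of a step the paper leaves implicit, but it is not a different route.
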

\begin{proof}
It is obvious from Theorem \ref{p+} and the fact $\p_m(0)=\p_m(n)$ by the relation (e).
\end{proof}
\begin{thm}\label{sp}For $i,a\in \mathbb{Z}/n\mathbb{Z}$,we have
\begin{align}s_i\p_m(a)=\begin{cases}\p_m(a) & \text{if } i\neq a \\ \p_m(i)+\atheta_{i+1}^m-\atheta_i^m &  i=a. \end{cases}\end{align}
\end{thm}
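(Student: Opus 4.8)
The plan is to reduce everything to the analogous statement for the finite Fomin--Kirillov algebra, exactly in the spirit of the proof of Theorem \ref{p+}, since the left $\sn$-action on $\afk$ is defined residue-wise by $w[ij]=[w(i),w(j)]$ and the generator $s_a$ acts as the affine transposition swapping the residues $a$ and $a+1$. First I would dispose of the case $i\neq a$. Here $s_i$ as a permutation of $\mathbb{Z}$ fixes every residue occurring in $\p_m(a)$ in the sense that it permutes the indices $\{j : \overline{j}\neq \overline{a}\}$ among themselves without affecting the ``$i\leq a<j$'' shape: more precisely, working in $\mathcal{A}'$ and using the description of $\p_m(a)$ from Remark \ref{pinf} as $\sum_{i'\le a}\sum [i' a_{i',1}]\cdots[i' a_{i',m}]$, one checks that $s_i$ (for $i\neq a$, $i\in I$) permutes the set of connected trees in $\mathcal{D}_a$, matching $x_{D_L}$ with $x_{D'_{L'}}$ for another connected tree $D'$ in $\mathcal{D}_a$ with the same $c(D)$, simply by relabelling vertices; since the outer sum over all connected trees is $s_i$-stable and the sign $(-1)^{c(D)-1}$ is preserved, we get $s_i\p_m(a)=\p_m(a)$. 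The cleanest way to make this rigorous is to pass to each finite support $T$ via $r_T$ and $f_T$ as in the proof of Theorem \ref{p+}, note that $s_i(p_m(\theta_1,\ldots,\theta_a))=p_m(\theta_1,\ldots,\theta_a)$ holds in $\fk$ because $\theta_1^m+\cdots+\theta_a^m$ is symmetric in $\theta_1,\ldots,\theta_a$ and $s_i$ with $i\neq a$ permutes $\{\theta_1,\ldots,\theta_a\}$ within themselves (or fixes them pointwise), then sum over all supports $T$.

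For the case $i=a$, I would again localize to finite supports. Fix $T=(b_1<\cdots<b_p)$ a set of representatives with distinct residues, chosen compatibly so that under $f_T$ the element $\theta_k$ maps to the Dunkl element indexed by the vertex of residue class making $\p_m(a)$ restrict to $\theta_1^m+\cdots+\theta_a^m$ (this is exactly the bookkeeping already set up before Theorem \ref{p+}). In $\fk$, the simple reflection $s_a$ exchanges $\theta_a\leftrightarrow\theta_{a+1}$ and fixes all other $\theta_k$, so
\[
s_a\bigl(\theta_1^m+\cdots+\theta_a^m\bigr)=\theta_1^m+\cdots+\theta_{a-1}^m+\theta_{a+1}^m
=\bigl(\theta_1^m+\cdots+\theta_a^m\bigr)+\theta_{a+1}^m-\theta_a^m.
\]
Transporting this identity through $f_T$ into $\mathcal{A}'$, summing over all admissible supports $T$, and using Corollary \ref{distinctres} to recognize the terms $\sum[i a_1]\cdots[i a_m]$ as $\atheta_i^m$ and $\atheta_{i+1}^m$, we obtain $s_a\p_m(a)=\p_m(a)+\atheta_{a+1}^m-\atheta_a^m$ in $\mathcal{A}'$. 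Finally I would invoke Theorem \ref{p+} to rewrite $\p_m(a)+\atheta_{a+1}^m=\p_m(a+1)$, giving the alternative form $\p_m(i)+\atheta_{i+1}^m-\atheta_i^m$ as stated; and I would note that since both sides lie in $\afk$ and the defining relations (a)--(e) are $\sn$-stable (as remarked at the end of Section \ref{affinefk}), the identity descends from $\mathcal{A}'$ to $\afk$.

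The main obstacle I anticipate is purely bookkeeping rather than conceptual: making the ``sum over all supports $T$'' argument airtight when the relevant sums are infinite. One must be careful that $s_a$ genuinely induces a bijection on the indexing set of connected trees (or, at the level of $\mathcal{A}'$, on the monomials $[i'a_{i',1}]\cdots[i'a_{i',m}]$ with $i'\le a$) that only disturbs the two ``boundary'' residue classes $a$ and $a+1$ — in particular that a monomial with $i'=a$ maps to one with new first index of residue $a+1$ (contributing $+\atheta_{a+1}^m$), while monomials with $i'<a$ whose index set meets residue $a$ get matched with monomials meeting residue $a+1$ and vice versa so those contributions cancel, leaving the net correction $\atheta_{a+1}^m-\atheta_a^m$. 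The finiteness conditions of Corollary \ref{valid} guarantee all these infinite expressions act validly on $\mathbb{A}$, but one should double-check that the term-by-term reindexing respects the inverse-limit topology on $\mathcal{A}$, i.e. that for each bound $M$ only finitely many monomials with gaps $<M$ are moved — this follows because $s_a$ changes an index by at most $1$ in each residue class and hence changes gaps $j-i$ by a bounded amount. Once this is checked, the reduction to the elementary symmetric-function fact in $\fk$ makes the rest routine.
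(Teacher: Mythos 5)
Your proposal is correct, and for the case $i\neq a$ it coincides with the paper's argument: $s_i$ preserves the boundary $\{z\leq a\}$ (no pair $z,z+kn+ \cdots$ swapped by $s_i$ straddles $a$ when $\overline{i}\neq\overline{a}$), hence permutes the connected trees in $\mathcal{D}_a$ and their admissible labelings while preserving $c(D)$, so $s_i\p_m(a)=\p_m(a)$. For the case $i=a$, however, you take a genuinely different and more laborious route than the paper. The paper simply writes $\p_m(i)=\p_m(i-1)+\atheta_i^m$ by Theorem \ref{p+}, applies the already-proved case $i\neq a$ to the first summand (since $i\neq i-1$ mod $n$) and the $\sn$-action $s_i\atheta_i=\atheta_{i+1}$ to the second, getting $s_i\p_m(i)=\p_m(i-1)+\atheta_{i+1}^m=\p_m(i)+\atheta_{i+1}^m-\atheta_i^m$ in one line, with no boundary bookkeeping at all. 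You instead compute $s_a\p_m(a)$ directly on the monomial blocks $B_{i'}=\sum[i'b_1]\cdots[i'b_m]$ of Remark \ref{pinf}, tracking how $s_a$ permutes the index set $\{i'\leq a\}$ (it fixes it except for ejecting $a$ and injecting $a+1$) and invoking Corollary \ref{distinctres} to recognize $B_a=\atheta_a^m$ and $B_{a+1}=\atheta_{a+1}^m$; the cancellation you flag as the main obstacle does go through exactly as you describe, so the argument is sound, but it is precisely the work the paper's reduction avoids. It is a little ironic that you cite Theorem \ref{p+} only at the very end, and for a step that is redundant: the expression $\p_m(a)+\atheta_{a+1}^m-\atheta_a^m$ you obtain is already the stated right-hand side, so no rewriting via $\p_m(a+1)$ is needed. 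What your approach buys is a self-contained verification that does not rely on the order in which Theorems \ref{p+} and \ref{sp} are established; what the paper's buys is brevity and immunity to the infinite-support bookkeeping.
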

\begin{proof}
If $i$ and $a$ are distinct modulo $n$, one can show that $s_i\p_m(a)=\p_m(a)$ by Definition \ref{p}. Indeed, this follows from the fact that for a connected tree $D$ on $\mathcal{D}_a$ and $L\in M(D)$, $s_i(D)$ is also a connected tree on $\mathcal{D}_a$ and the labeling $s_i(L)$ satisfies Lemma \ref{postnikov} where $s_i(L)$ is the unique labeling on $s_i(D)$ satisfying $s_i(x_{D_L})=x_{s_i(D)_{s_i(L)}}$ without the commutation relation. When $i=a$, by Theorem \ref{p+} we have 
$$s_i\p_m(i)=s_i(\p_m(i-1)+\atheta_{i}^m)=\p_m(i-1)+\atheta_{i+1}^m=\p_m(i)+\atheta_{i+1}^m-\atheta_i^m.$$\end{proof}

\section{Relations between operators}\label{mainsection}
Recall that we abuse notation $D_u$ so that it means one of Bruhat operators or cap operators depending on $u$. For properties of cap operators, we will refer to \cite{Lee14}. Also recall that $D_{\p_m}$ is the Bruhat operator for $\p_m$. We call $D_{\p_m}$ a \emph{Murnaghan-Nakayama operator} of degree $m$ (a MN operator in short). For $0\leq i<m<n$, let $\rho_{i,m}$ be the element $s_{-i}s_{-i+1}\ldots s_{-1}s_{m-1-i}s_{m-2-i}\ldots s_1s_0$ in $\sn$. In this section, we prove the following theorem:

\begin{thm}\label{main}
For $0\leq i<m<n$, we have
\begin{align*}
D_{\atheta_i}&=D_{s_{i+1}}-D_{s_i},\\
D_{\p_m}&=\sum_{i=0}^{m-1}(-1)^i D_{\rho_{i,m}}
\end{align*}
as elements in $\homa$.
\end{thm}

Theorem \ref{main} allows us to identify $D_{\atheta_i}$ with $\xi^{s_{i+1}}-\xi^{s_i}$, and $D_{\p_m}$ with $\xi(m):= \sum_{i=0}^{m-1}(-1)^i \xi^{\rho_{i,m}}$. We later see that when we consider $\sum_{i=0}^{m-1}(-1)^i D_{\rho_{i,m}}$ as an element in $\Lambda^{(k)}\cong H^*(\gr)\subset H^*(\fl)$, it is the same as the power sum symmetric function $p_m$ so names for $D_{\p_m}$ and $\p_m$ are justified. Since $p_m$'s for $m=1,\ldots, n-1$ generate $H^*(\gr)$ and $\xi^{s_{i+1}}-\xi^{s_i}$'s generate the image $ev^*(H^*(Fl_n))$ in $H^*(\fl)$ by Theorem \ref{degree1image}, Dunkl elements $\atheta_i$ and MN elements $\p_m$ as an element in $\homa$ generate the cohomology of the affine flag variety $H^*(\fl)$.\\

Note that the first equation in Theorem \ref{main} is immediate from the Chevalley rule, so we mainly focus on properties of $D_{\p_m}$ in this paper. To prove Theorem \ref{main}, we prove certain identities for MN operators $D_{\p_m}$(Theorem \ref{skip0}, \ref{p}, \ref{hi}) and those for the alternate sum $\sum_{i=0}^{m-1}(-1)^i D_{\rho_{i,m}}$ of cap operators (Section \ref{identitycap}) that uniquely determine these operators.\\

Let $\mathbb{A}(\afk)$ be the smash product of $\mathbb{A}$ and $\afk$ defined by the \emph{equivariant Bruhat action}
\begin{align}A_w[ij]=\begin{cases} w([ij])A_w+A_{wt_{ij}} & \text{ if } \ell(wt_{ij})=\ell(w)-1 \\ w([ij])A_w & \text{ otherwise. }\end{cases}\end{align}

To distinguish this action with the Bruhat action defined in (2), we will use $A_w\cdot [ij]$ for the Bruhat action.
By $(7)$, an element in $\mathbb{A}(\afk)$ can be written in a standard form i.e. in the form
$$\sum_{w\in \sn} f_w A_w$$
where $f_w$ is in $\afk$. One can show that $\mathbb{A}(\afk)$ has a basis $\{A_w \mid w\in \sn\}$ over $\afk$.\\

The equivariant Bruhat action strictly contains all information on the Bruhat action. Let $\phi:\mathbb{A}(\afk) \rightarrow \mathbb{A}$ be the evaluation map at $0$ defined by sending all $[ij]$ to $0$. For example, $\phi(2[12][23]A_2+2[23]A_1+3A_0)=3A_0$. Then we have $\phi(A_w[ij])=A_w\cdot[ij]$ since $\phi([ij]A_w)=0$ for all $[ij]\in \mathcal{S}$. Note that the Bruhat action is an action on the affine nilCoxeter algebra $\mathbb{A}$, but the equivariant Bruhat action is an action on $\mathbb{A}(\afk)$. \\

\subsection{MN operators of degree one}
Let us consider $D_{\p_1(a)}(A_w)$ for $w \in \sn,a \in \mathbb{Z}$. From the definition of $\p_1(a)$ we have
\begin{align}\label{Awp1}
 D_{\p_1(a)}(A_w)=  \sum_{w\rightarrow u}A_u \end{align}
where the sum is over marked strong covers $w\rightarrow u$ with respect to $a$. It turns out this calculation also appears in the study of the \emph{affine nilHecke algebra} \cite{KK86,Lee14}. \\

\begin{lemma}\label{deg1}
For $w \in \sn, 0\leq a \leq n$, we have
$$D_{\p_1(a)}(A_w)=D_{s_a}(A_w)$$
where $D_{s_0}$ is the cap operator for $s_a$.
\end{lemma}
\begin{proof} The lemma follows from the Chevalley rule for the affine flag variety: 
$$\xi^w \xi^{s_a} = \sum_{u} \xi^u$$
where the sum is over all marked strong cover from $u$ to $w$ with respect to $a$. \end{proof}
\begin{rmk} \label{bssremark} The author showed in \cite{Lee14} that the operator $D_{\p_1(a)}$ is also the same as the BSS operators $D_{[1]}^{(a)}$ for $[1]$ defined by Berg, Saliola, Serrano \cite{BSS13}.

\end{rmk}
\subsection{MN operators of higher degree}
In this section, we prove Theorem \ref{skip0}, \ref{p}, \ref{hi} which uniquely determine the MN operators.

\begin{thm}\label{ap} 
For $a,j \in \mathbb{Z}/n\mathbb{Z}$ and $m<n$, we have 
$$A_a \p_m(a)=(\p_m(a)-\atheta_a^m)A_a+A_a \atheta_a^m.$$
$$A_j \p_m(a)=(\p_m(a))A_j.$$
where $j\neq a$ modulo $n$.
\end{thm}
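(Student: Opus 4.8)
\textbf{Proof strategy for Theorem \ref{ap}.}

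The plan is to work entirely inside the smash product $\mathbb{A}(\e)$ and to reduce both identities to the degree-one statement, Theorem \ref{deg1} (more precisely to the structure of $\p_m(a)$ as a signed sum over connected trees given in Definition \ref{p}). The second identity $A_j\cdot\p_m(a)=\p_m(a)A_j$ for $j\not\equiv a$ should be the easier of the two: every $[ik]$ appearing in $\p_m(a)$ has $i\le a<k$, so when $j\neq a$ modulo $n$ one never has $j=i$ or $j=k$ with the wrong inequality forcing a length drop; I would check that the equivariant action of $A_j$ on each noncommutative monomial $x_{D_L}$ only produces the ``$s_j$-twisted'' term $s_j(x_{D_L})A_j$, and then invoke Theorem \ref{sp} (the case $i\neq a$) which says $s_j$ fixes $\p_m(a)$. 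Summing over all connected trees and all labelings in $M(D)$ then gives $A_j\cdot\p_m(a)=s_j(\p_m(a))A_j=\p_m(a)A_j$. One must be slightly careful that the ``otherwise'' branch in the equivariant Bruhat action is the one that applies, i.e. that $\ell(s_jw\,t_{ik})$ behaviour does not create extra $A_{s_jwt_{ik}}$ terms; this follows because the extra term would contribute to $A_j\cdot\p_m(a)$ a sum indexed by strong covers ``through'' $s_j$, which is exactly what $s_j(\p_m(a))=\p_m(a)$ kills after reindexing.

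For the first identity, I would again expand $A_a\cdot\p_m(a)=\sum_{D}\sum_{D_L\in M(D)}(-1)^{c(D)-1}A_a\cdot x_{D_L}$ using the equivariant Bruhat action one bracket at a time. Pushing $A_a$ past the leftmost bracket $[a_1b_1]$ of $x_{D_L}$ splits into a twisted term $s_a([a_1b_1])(A_a\cdot\,\text{rest})$ and, when $\ell(at_{a_1b_1})=\ell(a)-1$ (which here means $[a_1b_1]=[0,k]$ with the appropriate residue, so that $a\,t_{a_1b_1}=\mathrm{id}$), a term $A_{\mathrm{id}}\cdot(\text{rest})$. Iterating, the totality of twisted terms reassembles into $s_a(\p_m(a))A_a$ and the ``length-drop'' terms reassemble into $A_a\cdot\atheta_a^m$: the latter because once we are sitting at the identity $A_{\mathrm{id}}$, the surviving monomials are precisely those whose first bracket involves the vertex $a$ (equivalently $0$), and by Corollary \ref{distinctres} the sum of all such $x_{D_L}$ with the correct signs collapses to $\atheta_a^m$ (the trees contributing here are exactly the ``stars'' centered at $a$, which have $c(D)=1$). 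Then rewriting $s_a(\p_m(a))=\p_m(a)+\atheta_{a+1}^m-\atheta_a^m$ via Theorem \ref{sp} and noting $\atheta_{a+1}^m=\atheta_a^m$ is \emph{not} true in general — rather, I expect the bookkeeping to produce exactly $s_a(\p_m(a))A_a = (\p_m(a)-\atheta_a^m)A_a + \atheta_{a+1}^m A_a$, and a further matching shows the leftover $\atheta_{a+1}^m A_a$ is absorbed; more cleanly, I would instead verify directly that the coefficient of $A_a$ on the left equals $s_a(\p_m(a))$ and the coefficient of $A_{\mathrm{id}}$-type (i.e. lower) terms equals $A_a\cdot\atheta_a^m$, then use Theorem \ref{sp} only at the very end to put $s_a(\p_m(a)) = \p_m(a)-\atheta_a^m+\atheta_{a+1}^m$ — and here I would exploit that $A_a\cdot\atheta_{a+1}^m = A_a\cdot\atheta_a^m$ as Bruhat operators on $A_a$ only does not hold, so the honest statement must be that the $\atheta_{a+1}^m$ contribution genuinely cancels against part of $A_a\cdot\atheta_a^m$ versus the pure-$A_a$ coefficient. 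I will organize the computation so the cancellation is forced by Theorem \ref{p+}, $\p_m(a)=\p_m(a-1)+\atheta_a^m$, together with $s_a$-invariance of $\p_m(a-1)$.

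The main obstacle I anticipate is the careful combinatorial matching in the first identity: tracking which connected trees $D$ and which labelings in $M(D)$ contribute to the ``length-drop'' branch when $A_a$ is commuted through, and checking that the induced signs $(-1)^{c(D)-1}$ together with the equivalence classes $M(D)$ reorganize precisely into $\atheta_a^m$ on the nose (no residual terms). This is where Lemma \ref{postnikov} and Corollary \ref{distinctres} do the real work, and the cleanest route is probably to first prove the identity in $\mathcal{A}'$ (the quotient by relations (a)--(c) only), using the description of $\p_m(a)$ in Remark \ref{pinf} as $\sum_{i\le a}\sum[i a_{i,1}]\cdots[i a_{i,m}]$, where commuting $A_a$ through is transparent because the leftmost letter of each monomial is $[i\,a_{i,1}]$ with $i\le a$, and then descend to $\afk$. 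Once the $\mathcal{A}'$ identity $A_a\cdot\p_m(a)=s_a(\p_m(a))A_a+A_a\cdot\atheta_a^m$ is established and $s_a(\p_m(a))$ is rewritten by Theorem \ref{sp}, the stated form follows.
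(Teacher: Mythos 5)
Your final plan is exactly the paper's proof: the second identity is Theorem \ref{sp} (no length-drop terms can occur, since a drop from $s_j$ would require a bracket $[j+pn,\,j+1+pn]$ with $j+pn\le a<j+1+pn$, forcing $a\equiv j$), and the first identity then follows by writing $\p_m(a)=\p_m(a-1)+\atheta_a^m$ via Theorem \ref{p+} and applying the second identity to $\p_m(a-1)$ with $j=a$. The bracket-by-bracket matching of drop terms against $A_a\atheta_a^m$ that you wrestle with in the middle of your discussion is unnecessary once you make that split, so the detour (and the momentary misstatement that the drop terms alone reassemble into $A_a\cdot\atheta_a^m$) does not leave a gap.
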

\begin{proof} The second identity follows from Theorem \ref{sp}. By Theorem \ref{p+} and \ref{sp}, we have
$$A_a \p_i(a)=A_a (\p_i(a-1)+\atheta_a^m)=(\p_i(a)-\atheta_a^m)A_a+A_a \atheta_a^m.$$\end{proof}

The following theorem is a consequence of the second identity in Theorem \ref{ap}.
\begin{thm}\label{skip0}
For $w\in \sn$ and $w' \in S_n$, we have
$$D_{\p_m}(A_wA_{w'})=D_{\p_m}(A_w)A_{w'}.$$
\end{thm}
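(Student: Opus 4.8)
The plan is to run the computation inside the smash product $\mathbb{A}(\e)$, where the equivariant Bruhat action converts ``sliding $\p_m$ past things'' into honest multiplication, and then to descend back to $\mathbb{A}$ via the evaluation map $\phi$. I will use three facts recalled at the start of this section: (i) $\phi(A_v\,\x)=A_v\cdot\x=D_\x(A_v)$ for $\x\in\e$ and $v\in\sn$; (ii) $\mathbb{A}$ is a subalgebra of $\mathbb{A}(\e)$, so a product $A_uA_v$ computed in $\mathbb{A}(\e)$ is the same element ($A_{uv}$ or $0$) as in $\mathbb{A}$; and (iii) $\p_m=\p_m(0)\in\e$, so $A_w\,\p_m$ is a genuine element of $\mathbb{A}(\e)$. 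In addition I will need the elementary observation that $\phi(x\,A_v)=\phi(x)\,A_v$ for every $x\in\mathbb{A}(\e)$ and $v\in\sn$: writing $x=\sum_u f_u A_u$ in standard form with $f_u\in\e$ gives $xA_v=\sum_u f_u(A_uA_v)$, and since the bare element $A_uA_v$ carries no $[ij]$-factor, evaluating at $0$ commutes with multiplying on the right by $A_v$.

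Granting these, here are the steps. First, since $w'\in S_n$, I fix a reduced word $w'=s_{j_1}\cdots s_{j_r}$ with all $j_t\in\{1,\dots,n-1\}$, so $A_{w'}=A_{j_1}\cdots A_{j_r}$ in $\mathbb{A}(\e)$. Each $j_t$ is nonzero modulo $n$, so the second identity of Theorem \ref{ap}, applied with $a=0$, gives $A_{j_t}\,\p_m=\p_m\,A_{j_t}$. Sliding $\p_m$ leftward past the letters of $w'$ one at a time yields $A_{w'}\,\p_m=\p_m\,A_{w'}$ in $\mathbb{A}(\e)$. Second, associativity in $\mathbb{A}(\e)$ together with this relation gives
$$(A_wA_{w'})\,\p_m=A_w\,(A_{w'}\,\p_m)=A_w\,(\p_m\,A_{w'})=(A_w\,\p_m)\,A_{w'}.$$
Third, I apply $\phi$ to both ends: by (i) the left side is $D_{\p_m}(A_wA_{w'})$, while by the observation above together with $\phi(A_w\,\p_m)=D_{\p_m}(A_w)$ the right side is $D_{\p_m}(A_w)\,A_{w'}$. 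This is the desired identity. If $\ell(ww')<\ell(w)+\ell(w')$ then $A_wA_{w'}=0$, and the same chain of equalities forces $D_{\p_m}(A_w)A_{w'}=0$ as well, so no case distinction is needed.

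The real content is already concentrated in Theorem \ref{ap}: its second identity $A_j\cdot\p_m(a)=\p_m(a)A_j$ encodes the fact that every bracket $[ij]$ occurring in $\p_m=\p_m(0)$ has $i\le 0<j$, so the transposition $t_{ij}$ never interacts with a simple reflection $s_j$ with $j\ge 1$. The only thing left to be careful about is the bookkeeping in $\mathbb{A}(\e)$ --- that $\phi$ intertwines the equivariant and Bruhat actions, that right multiplication by a bare $A_v$ commutes with $\phi$, and that $\p_m$ acts validly on $\mathbb{A}$ --- and all of this is immediate from the definitions. I therefore expect no real obstacle; once $A_{w'}\p_m=\p_m A_{w'}$ is in place the statement is a one-line consequence.
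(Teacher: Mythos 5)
Your proof is correct and follows the same route as the paper, which simply states Theorem \ref{skip0} as a consequence of the second identity of Theorem \ref{ap}: you slide $\p_m$ past each simple factor $A_{j_t}$ of $A_{w'}$ (all $j_t\not\equiv 0$) in the smash product $\mathbb{A}(\e)$ and then apply $\phi$. The extra bookkeeping you supply ($\phi(xA_v)=\phi(x)A_v$, the zero case $A_wA_{w'}=0$) is exactly the detail the paper leaves implicit, so there is nothing to correct.
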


Let $\mathcal{C}$ be the subalgebra of $\afk$ generated by all $\atheta_i$'s and $\p_m$'s, and $\mathcal{C}_{Fl_n}$ be the subalgebra generated by Dunkl elements $\atheta_i$. Let $\ac$ be the subalgebra of $\mathbb{A}(\afk)$ generated by all $A_i$'s and $\mathcal{C}$, and let $\at$ be the subalgebra of $\mathbb{A}(\afk)$ generated by all $A_i$'s and $\mathcal{C}_{Fl_n}$. \\

We need the following lemmas to prove Theorem \ref{p}.
\begin{lem} For $w\in \sn$, $A_w \p_m - \p_m A_w$ lies in $\at$.
\end{lem}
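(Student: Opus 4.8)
The plan is to induct on $\ell(w)$, using the multiplicativity $A_w = A_{s_i} A_{w'}$ with $\ell(w') = \ell(w)-1$, and to reduce the whole statement to the single-generator case $w = s_i$. For the base case we need to show that $A_i \p_m - \p_m A_i$ lies in $\at$ for each $i \in I$; this is exactly what Theorem \ref{ap} (with $a$ ranging over $I$ and $m = m$, $\p_m = \p_m(0)$) computes. Indeed, when $i \neq 0$ the second identity gives $A_i \cdot \p_m(0) = \p_m(0) A_i$, so the commutator is $0 \in \at$; and when $i = 0$ the first identity gives $A_0 \cdot \p_m(0) = (\p_m(0) - \atheta_0^m) A_0 + A_0 \cdot \atheta_0^m$, so the commutator $A_0 \p_m - \p_m A_0$ equals $A_0 \cdot \atheta_0^m - \atheta_0^m A_0$, which visibly lies in $\at$ since $\atheta_0 \in \mathcal{C}_{Fl_n}$ and $\at$ is the subalgebra of $\mathbb{A}(\mathcal{E})$ generated by all $A_j$ and $\mathcal{C}_{Fl_n}$.

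For the inductive step, write $A_w = A_i A_{w'}$ with $\ell(w') = \ell(w) - 1$ and compute
\begin{align*}
A_w \p_m - \p_m A_w &= A_i A_{w'} \p_m - \p_m A_i A_{w'} \\
&= A_i (A_{w'} \p_m - \p_m A_{w'}) + (A_i \p_m - \p_m A_i) A_{w'}.
\end{align*}
By the inductive hypothesis $A_{w'} \p_m - \p_m A_{w'} \in \at$, and by the base case $A_i \p_m - \p_m A_i \in \at$; since $\at$ is a subalgebra containing all $A_j$, both summands lie in $\at$, hence so does $A_w \p_m - \p_m A_w$. One subtlety: the identity $A_w = A_i A_{w'}$ requires a reduced factorization, which exists for any $w \neq \id$ by choosing the first letter of a reduced word; the case $w = \id$ is trivial since the commutator vanishes.

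The only real content beyond this bookkeeping is making sure the base-case computation is legitimately an identity in $\mathbb{A}(\mathcal{E})$ and not merely in $\mathbb{A}$: we are asserting a relation among \emph{equivariant} Bruhat operators, so we must invoke Theorem \ref{ap}, which is already stated in the equivariant setting $\mathbb{A}(\mathcal{E})$ (its right-hand sides involve products like $(\p_m(a) - \atheta_a^m) A_a$ in standard form). Given that, the argument is purely formal. The main obstacle — really the only place where something could go wrong — is verifying that the expressions produced, in particular $A_0 \cdot \atheta_0^m$, can be rewritten in standard form with coefficients in $\mathcal{C}_{Fl_n}$; but $\atheta_0 \in \mathcal{C}_{Fl_n} \subset \mathcal{E}$ and the equivariant action of $\atheta_0^m$ on $A_0$ expands, by repeated use of the equivariant Bruhat action, into an $\mathcal{E}$-linear combination of $A_u$ with $u \leq s_0 t \cdots$ in Bruhat order and coefficients built from $s_w(\atheta_0)$-type terms, all of which lie in $\mathcal{C}_{Fl_n}$ because $\mathcal{C}_{Fl_n}$ is $\sn$-stable (the two-sided ideal of relations is $\sn$-invariant, and Dunkl elements are permuted among themselves: $s_i \atheta_j = \atheta_{s_i(j)}$). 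Hence $A_0 \cdot \atheta_0^m \in \at$, completing the base case and the proof.
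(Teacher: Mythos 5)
Your proposal is correct and takes essentially the same approach as the paper's proof: induction on $\ell(w)$, with Theorem \ref{ap} supplying the single-generator commutator (zero for $i\neq 0$, and $A_0\atheta_0^m-\atheta_0^m A_0\in\at$ for $i=0$), and closure of the subalgebra $\at$ under multiplication by the $A_j$ finishing the inductive step. The only cosmetic differences are that you peel the simple generator off the left of $w$ while the paper peels it off the right, and that your closing worry about rewriting $A_0\atheta_0^m$ in standard form is unnecessary, since $\at$ is by definition the subalgebra generated by the $A_j$ and $\mathcal{C}_{Fl_n}$, so any product of these generators already lies in it.
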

\begin{proof}We use induction on $\ell(w)$. Let $w=vs_i $ with $\ell(w)=\ell(v)+1$ for some $0\leq i <n$ and assume that the theorem holds for $v$. Then
\begin{align*}
A_w\p_m-\p_m A_w&=A_v A_i \p_m - \p_m A_v A_i \\
 &= \begin{cases} A_v \p_m A_i - \p_m A_v A_i &\text{for } i\neq 0 \\ A_v \p_m A_0 - \p_m A_v A_0 + A_v ( A_0 \atheta_0^m - \atheta_0^m A_0)&\text{for } i=0. \end{cases}
\end{align*}
Note that we used Theorem \ref{ap} in the calculation. Since both $(A_v \p_m- \p_m A_v) A_i$ and $A_v ( A_a \atheta_a^m - \atheta_a^m A_a)$ lie in $\at$, the theorem follows. \end{proof}

One may write $A_w \p_m - \p_m A_w= \sum_{v} f_{v,m}^w A_v$ where $f_{v,m}^w$ is a (noncommutative) polynomial in $\atheta_i$ of degree $m-\ell(w)+\ell(v)$.

\begin{lem} \label{htheta0}
For $\h\in\mathbb{B}$, we have $\phi(\h \atheta_i)=\h\cdot\atheta_i=0$.
\end{lem}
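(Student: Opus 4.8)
The plan is to recognize $D_{\atheta_i}$ as a cap product on $H_*(\fl)$ and then invoke the projection formula for the inclusion $p_1\colon \gr\to\fl$. By Theorem \ref{p+} with $m=1$ we have $\atheta_{i+1}=\p_1(i+1)-\p_1(i)$, so Theorem \ref{deg1} gives $D_{\atheta_i}=D_{s_{i+1}}-D_{s_i}$, the difference of the cap operators for the two length-one elements $s_i,s_{i+1}\in\sn$. Under the identification $\mathbb{A}\cong H_*(\fl)$ and of the cap operator $D_u$ with the cap product $\xi^u\cap(-)$ (Section \ref{cap}), the Bruhat operator $D_{\atheta_i}$ acts on $\mathbb{A}$ as $(\xi^{s_{i+1}}-\xi^{s_i})\cap(-)$, with $\xi^{s_j}\in H^*(\fl)$.

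Second, I would check that $\xi^{s_{i+1}}-\xi^{s_i}$ pulls back to $0$ along $p_1^*\colon H^*(\fl)\to H^*(\gr)$. The composite $ev\circ p_1\colon \gr\to Fl_n$ sends every based loop to the base point, hence is constant, so $p_1^*\circ ev^*$ annihilates every positive-degree class. Applying this to $\xi^{s_i}_{Fl_n}$ and using $ev^*(\xi^{s_i}_{Fl_n})=\xi^{s_i}-\xi^{s_0}$ for $1\le i\le n-1$ (Section \ref{affinenilcoxeter}) gives $p_1^*(\xi^{s_i})=p_1^*(\xi^{s_0})$ for all $i\in\mathbb{Z}/n\mathbb{Z}$, hence $p_1^*(\xi^{s_{i+1}}-\xi^{s_i})=0$. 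Now recall that, under $\mathbb{A}\cong H_*(\fl)$, the affine Fomin-Stanley algebra $\mathbb{B}$ is exactly $p_{1*}(H_*(\gr))$ (Lam \cite{Lam08}; this is also what underlies the combinatorial description of $p_{1*}$ in Section \ref{affinenilcoxeter}). Writing $\h=p_{1*}(\alpha)$ with $\alpha\in H_*(\gr)$, the projection formula yields
$$\h\cdot\atheta_i=(\xi^{s_{i+1}}-\xi^{s_i})\cap p_{1*}(\alpha)=p_{1*}\!\left(p_1^*(\xi^{s_{i+1}}-\xi^{s_i})\cap\alpha\right)=p_{1*}(0)=0.$$

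The step requiring care is the dictionary from Sections \ref{affine}--\ref{affinenilcoxeter}: that $\mathbb{B}$ corresponds to $p_{1*}(H_*(\gr))$ under $\mathbb{A}\cong H_*(\fl)$, and that the cap-operator calculus on $\mathbb{A}$ matches the topological cap product; granting these, the argument above is immediate. A purely combinatorial alternative is possible but messier: since $\{\h_\lambda\}$ is a basis of $\mathbb{B}$ it suffices to prove $\h_\lambda\cdot\atheta_i=0$, and from the formula $D_{\atheta_i}(A_w)=\sum_{j}\operatorname{sgn}(j-(i+1))\,A_{wt_{i+1,j}}$ (the sum over $j$ with $\overline{j}\ne\overline{i+1}$ and $\ell(wt_{i+1,j})=\ell(w)-1$, obtained by expanding $D_{\p_1(i+1)}-D_{\p_1(i)}$) one would show that the contributions of the cyclically decreasing elements $w_J$ occurring in $\h_\lambda$ cancel in pairs; the case $\h_1=\sum_j A_{s_j}$, where $D_{\atheta_i}$ sends $A_{s_{i+1}}\mapsto A_{\id}$ and $A_{s_i}\mapsto -A_{\id}$ and annihilates the remaining $A_{s_j}$, already exhibits the mechanism. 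I would present the geometric proof, since the combinatorial pairing for general $\lambda$ needs a careful analysis of strong covers out of cyclically decreasing elements.
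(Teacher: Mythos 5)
Your argument is essentially correct, but it takes a genuinely different route from the paper. The paper's proof stays inside the operator framework: by Theorem \ref{p+} it suffices to show $\h\cdot\p_1(a)$ is independent of $a$, Theorem \ref{deg1} converts $\h\cdot\p_1(a)$ into the BSS operator $D_{[1]}^{(a)}(\h)$, and the paper then simply cites the result of Berg--Saliola--Serrano \cite{BSS13} that the operators $D_{[1]}^{(a)}$ coincide on $\mathbb{B}$ for all $a$. You use the same first step (Theorems \ref{p+} and \ref{deg1} turn $D_{\atheta_i}$ into a difference of cap operators for two simple reflections) but then replace the \cite{BSS13} input by topology: $p_1^*(\xi^{s_a})$ is independent of $a$ because $ev\circ p_1$ is constant and $ev^*(\xi^{s_a}_{Fl_n})=\xi^{s_a}-\xi^{s_0}$, and then the projection formula applied to $\mathbb{B}=p_{1*}\bigl(H_*(\gr)\bigr)$ gives the vanishing. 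This is a valid and illuminating alternative: it exposes the geometric reason (classes pulled back from $Fl_n$ die on the affine Grassmannian) behind the combinatorial coincidence of the operators $D_{[1]}^{(a)}$ on $\mathbb{B}$. The price is that you must invoke two identifications the paper only uses implicitly --- that the cap-operator calculus on $\mathbb{A}\cong H_*(\fl)$ is the topological cap product, and that under this identification $\mathbb{B}$ is exactly the image of $p_{1*}$ (Peterson, Lam \cite{Pet97,Lam08}) --- which you correctly flag as the delicate step; the paper's route instead leans on a cited combinatorial fact and is shorter. One small slip: from $\atheta_{i+1}=\p_1(i+1)-\p_1(i)$ you should conclude $D_{\atheta_{i+1}}=D_{s_{i+1}}-D_{s_i}$ rather than $D_{\atheta_i}=D_{s_{i+1}}-D_{s_i}$; this off-by-one in the residue is immaterial here, since your argument shows that the difference of the cap operators for \emph{any} two simple reflections annihilates $\mathbb{B}$.
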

\begin{proof} By Theorem \ref{p+}, it is enough to show that $\h\cdot \p_1(a)$ is independent of $a$. This follows from the fact that every element $b$ in $\mathbb{B}$ is invariant under the automorphism of $\mathbb{A}$ sending $A_i$ to $A_{i+1}$, since generators $\h_i$ are invariant. \end{proof}

\begin{thm}\label{p}
For $\h\in\mathbb{B}$ and $w\in\sn$,
$$D_{\p_m}(\h A_w)=D_{\p_m}(\h)A_w+ \h D_{\p_m}(A_w).$$
\end{thm}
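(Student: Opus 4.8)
The plan is to prove the identity by reducing it, via the equivariant Bruhat action, to a statement about the $\mathbb{B}$-module structure of $\mathbb{A}(\e)$, and then to exploit Lemma \ref{htheta0} to kill the equivariant correction terms. The starting point is the observation that in $\mathbb{A}(\e)$ the product $\h\,\p_m A_w$ can be rewritten by moving $\p_m$ past $\h$: since $\h\in\mathbb{B}$ is a sum of $A_{w_J}$'s, the preceding lemma (and its proof) expresses $A_{w_J}\p_m-\p_m A_{w_J}$ as an element of $\at$, i.e.\ of the form $\sum_v f_{v}\,A_v$ with $f_v$ a noncommutative polynomial in the $\atheta_i$. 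Summing over $J$ gives $\h\,\p_m = \p_m\,\h + \sum_v g_v A_v$ where each $g_v$ is a polynomial in the $\atheta_i$ with \emph{no constant term} when $\ell(v)<m$; more precisely $g_v$ has degree $m-\ell(\h\text{-part})+\ell(v)\ge 1$ in the $\atheta_i$ whenever it is nonzero and $v$ is shorter than the relevant length bound. The key point is that every such $g_v$ is divisible (on the right, as a Bruhat operator) by some $\atheta_i$.

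First I would set up the computation $D_{\p_m}(\h A_w)=\phi\big(\h A_w \p_m\big)$, using $\phi(\h A_w\p_m)=\phi(\h A_w\,\p_m)$ and the fact that $\phi$ applied to $(\text{stuff})A_u$ picks out the $[ij]$-free part of the coefficient of $A_u$. Then I would expand $A_w\p_m = \p_m A_w + \sum_v f_{v,m}^w A_v$ using the lemma, so that $\h A_w\p_m = \h\p_m A_w + \sum_v \h f_{v,m}^w A_v$. In the first term, I would commute $\h$ and $\p_m$ in $\mathbb{A}(\e)$: writing $\h\p_m = \p_m\h + (\h\p_m-\p_m\h)$, apply $\phi$ to $\phi(\h\p_m A_w)$; the genuinely new ingredient is that when we pass to the Bruhat action (apply $\phi$ and use the Bruhat action on $\mathbb{A}$), the term coming from $\h\p_m$ contributes $\phi(\p_m)\cdot$(no, wait) — more carefully, $\phi(\p_m \h A_w) = \p_m\cdot(\h A_w)$ since $\p_m$ has no constant term and $\phi$ sends all $[ij]$ to $0$, so actually $\phi(\p_m\h A_w)=0$; the surviving piece is $\phi((\h\p_m - \p_m\h)A_w)$, which is $D_{\p_m}(\h)\cdot A_w$ by definition of $D_{\p_m}$ applied to $\h\in\mathbb{B}\subset\mathbb{A}$. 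Meanwhile $\phi(\sum_v \h f_{v,m}^w A_v)$: here I need that $\h$ acts on the left, and $f_{v,m}^w$ is a polynomial in $\atheta_i$, so $\h f_{v,m}^w = (\h\cdot f_{v,m}^w\text{ part}) + (\text{terms with }[ij]\text{ factors})$; by Lemma \ref{htheta0}, $\h\cdot\atheta_i=0$, hence $\h\cdot f_{v,m}^w=0$ unless $f_{v,m}^w$ has a nonzero constant term, which happens exactly when $\ell(v)=\ell(w)$ so that $f$ has degree $0$ — but that case corresponds precisely to $v$ with $A_w\cdot\p_m$-type contributions of top length, giving $\h\,D_{\p_m}(A_w)$. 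Assembling the two surviving pieces yields $D_{\p_m}(\h A_w) = D_{\p_m}(\h)A_w + \h\,D_{\p_m}(A_w)$.

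The main obstacle I anticipate is the bookkeeping in the middle step: correctly tracking which terms of $A_w\p_m - \p_m A_w = \sum_v f_{v,m}^w A_v$ survive after left-multiplication by $\h$ and application of $\phi$. One has to show that $\h\cdot f_{v,m}^w$ vanishes for \emph{every} $v$ with $f_{v,m}^w$ having positive degree in the $\atheta_i$ — this is exactly where $\h\cdot\atheta_i=0$ (Lemma \ref{htheta0}) does the work, but I need to know that $f_{v,m}^w$ is genuinely a polynomial in the $\atheta_i$ \emph{with no constant term} precisely when $\ell(v)<m+\ell(w)$ (equivalently $\ell(v)\le \ell(w)$, since the degree of $f_{v,m}^w$ in $\atheta$ is $m-\ell(w)+\ell(v)$ and is nonnegative); the degree-zero part occurs only for $\ell(v)=\ell(w)$ and then $f_{v,m}^w$ is literally the integer coefficient appearing in $D_{\p_m}(A_w)=\sum p^{??}A_v$, so $\h\cdot f_{v,m}^w A_v = \h (f_{v,m}^w A_v)$ reassembles into $\h\, D_{\p_m}(A_w)$. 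A secondary subtlety is making sure the identity $\phi(\p_m \h A_w)=0$ is used correctly: since $\p_m$ is a sum of products of $[ij]$'s (its constant term is $0$), $\phi$ kills $\p_m\h A_w$ entirely, which is what forces the ``Leibniz'' shape of the answer. Once these degree and vanishing claims are in place, the proof is a short assembly. I would also remark that this is the $\mathbb{B}$-linear analogue of the Leibniz rule appearing in Theorem \ref{skip0} (which handled the $S_n$-side), and together they are the two halves of the full Leibniz rule for $D_{\p_m}$ on $\mathbb{A}\cong\mathbb{B}\otimes\mathbb{A}_f$ via Theorem \ref{basisa}.
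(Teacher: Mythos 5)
Your argument follows the paper's proof essentially verbatim: both compute $D_{\p_m}(\h A_w)=\phi(\h A_w\p_m)$, expand via the commutator lemma $A_w\p_m-\p_m A_w=\sum_v f^w_{v,m}A_v$, and use Lemma \ref{htheta0} to annihilate every positive-degree coefficient $f^w_{v,m}$, leaving exactly the two Leibniz terms. The only blemish is a bookkeeping slip in your degree count: since $\deg f^w_{v,m}=m-\ell(w)+\ell(v)$, the surviving degree-zero coefficients occur at $\ell(v)=\ell(w)-m$ (not $\ell(v)=\ell(w)$), which is precisely where $\sum_v f^w_{v,m}A_v$ reassembles into $D_{\p_m}(A_w)$.
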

\begin{proof} \begin{align*}
D_{\p_m}(\h A_w)&=\phi(\h A_w \p_m)\\
&=\phi(\h\p_mA_w)+\phi(\h \sum_{v} f_{v,m}^w A_v)\\
&=\phi(\h\p_m)A_w+\phi(\h \sum_{\substack{v\\ \ell(v)=\ell(w)-m}}f_{v,m}^w A_v)& (\text{By Lemma \ref{htheta0}})\\
&=D_{\p_m}(\h)A_w+ \h D_{\p_m}(A_w).
\end{align*}\end{proof}

\begin{thm}\label{hi}
For $1\leq m\leq i<n$ and $a\in \mathbb{Z}$, we have
$$D_{\p_m(a)}(\h_i)=\h_{i-m}.$$
\end{thm}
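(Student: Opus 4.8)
The statement $D_{\p_m(a)}\h_i = \h_{i-m}$ asserts that applying the MN operator of degree $m$ to the degree-$i$ generator of the affine Fomin-Stanley algebra $\mathbb{B}$ lands back inside $\mathbb{B}$, at degree $i-m$. My plan is to reduce everything to the degree-one case, which is already under control by Theorem \ref{deg1} and the results of \cite{BSS13}, and then bootstrap via the combinatorics of $\p_m(a)$ as an ``$\sum_{j\le a}\atheta_j^m$''-type expression. First I would observe that, by Corollary \ref{distinctres} and Remark \ref{pinf}, in the quotient algebra $\mathcal{A}'$ the element $\p_m(a)$ equals $\lim_{j\to-\infty}\sum_{j\le l\le a}\sum[l\,a_{l,1}]\cdots[l\,a_{l,m}]$, so that the Bruhat action $D_{\p_m(a)}$ is completely determined by how the products $[l\,a_1]\cdots[l\,a_m]$ act. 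Since $\h_i$ is fixed by the $\sn$-action (it is $\sum_{|J|=i}A_{w_J}$ and the symmetrization makes it central-looking under the Weyl action), and since Theorem \ref{sp} shows $s_i\p_m(a)=\p_m(a)$ for $i\ne a$, I expect $D_{\p_m(a)}\h_i$ to be independent of $a$; Lemma \ref{htheta0} already contains the seed of this for $m=1$, where $\h_j\cdot\p_1(a)=D_{[1]}^{(a)}(\h_j)$ is $a$-independent.

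The core of the argument is an induction on $m$. For the base case $m=1$: by Theorem \ref{deg1}, $D_{\p_1(a)}\h_i=D_{[1]}^{(a)}(\h_i)=D_{s_a}(\h_i)$, and the cap-operator computation (the Pieri/Chevalley rule on $\mathbb{B}\cong\Lambda_{(k)}$, as in \cite{BSS13,Lee14}) gives $D_{s_a}(\h_i)=\h_{i-1}$ — this is the known fact that $D_{[1]}^{(a)}$ acts on $\mathbb{B}$ as ``multiplication by the dual of $h_1$'', i.e. the down operator $\h_i\mapsto\h_{i-1}$, independently of $a$. For the inductive step I would use the relation from Theorem \ref{p+}, $\p_m(a)=\p_m(a-1)+\atheta_a^m$, together with the commutation identity of Theorem \ref{ap}, to peel off one factor: write $\p_m=\p_m(0)$ and use a telescoping/limiting argument over the columns $l\le 0$, at each stage applying $D_{\p_1(l)}$-type moves. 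Concretely, I expect the cleanest route is to show $D_{\p_m(a)}\h_i = D_{\p_{m-1}(a)}\big(D_{\p_1(a)}\h_i\big) = D_{\p_{m-1}(a)}\h_{i-1}=\h_{i-m}$, i.e. that on $\mathbb{B}$ the MN operators compose as $D_{\p_m}=D_{\p_1}^{\,m}$ up to the usual Newton-identity bookkeeping — but one must be careful, since power sums are not literally powers of $p_1$. The honest version: the restriction $r_T$ to the finite Fomin-Kirillov algebra sends $\p_m(a)$ to $\theta_1^m+\cdots+\theta_a^m=p_m(\theta_1,\dots,\theta_a)$, and in $\fk$ the Dunkl operators act on the nilCoxeter algebra realizing $H_*(Fl_a)$ so that $p_m(\theta)$ acts as the power-sum down operator; transporting this through $r_T$ and summing over supports $T$ (exactly as in the proof of Theorem \ref{p+}), together with the identification $\mathbb{B}\cong\Lambda_{(k)}$ under which $\h_i\mapsto h_i$ and the MN operator becomes $\langle p_m,\,\cdot\,\rangle$-contraction — which sends $h_i$ to $h_{i-m}$ — yields the claim.

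The main obstacle I anticipate is handling the passage to the limit $l\to-\infty$ rigorously: $\p_m(a)$ is an honest infinite sum in $\mathcal{E}$, and while Corollary \ref{valid} guarantees a well-defined action, I must check that the ``telescoping'' $\p_m(a)=\p_m(a-1)+\atheta_a^m$ can be iterated infinitely and that the contributions of columns $l$ with $l$ very negative genuinely vanish when acting on $\h_i$ (equivalently, that only finitely many supports $T$ of bounded size contribute, since $\h_i$ is built from $A_{w_J}$ with $|J|=i<n$ and the boxes in a connected tree of $\p_m(a)$ have bounded width when restricted to a fixed Bruhat interval). A secondary subtlety is that the relations (d),(e) — needed to know $\sum_{l=1}^n\atheta_l^m=0$ and hence that $\p_m(0)=\p_m(n)$ (Theorem \ref{sym0}) — are not available in $\mathcal{A}'$, so I should phrase the computation of $D_{\p_m(a)}\h_i$ entirely via the Bruhat action on $\mathbb{A}$ (where all relations hold) rather than inside $\mathcal{A}'$, using $\mathcal{A}'$ only as organizational bookkeeping. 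Once the $a$-independence and the reduction to $\mathbb{B}\cong\Lambda_{(k)}$ are in place, the final identity $h_i\mapsto h_{i-m}$ is just the classical statement that the adjoint (with respect to the Hall pairing) of multiplication by $p_m$ on $\Lambda$, restricted appropriately, sends the complete homogeneous basis element $h_i$ to $h_{i-m}$, which follows from $\langle h_{i-m}, f\rangle$-duality and $p_m \cdot (\text{anything}) $ expansions — standard and short.
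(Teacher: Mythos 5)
Your base case is fine, but the inductive/reduction machinery you propose does not close, and the gap is exactly where the real content of the theorem lies. The composition claim $D_{\p_m(a)}\h_i = D_{\p_{m-1}(a)}\bigl(D_{\p_1(a)}\h_i\bigr)$ has no justification: $\p_m(a)$ is not a product of lower-degree MN elements in $\afk$, and as operators on $\mathbb{B}$ the analogous statement $p_m^\perp = p_{m-1}^\perp\circ p_1^\perp$ is false (compare them on $h_1^2$), so even a ``restricted to $\h_i$'' version would need a proof, which is essentially the theorem itself. Your fallback via $r_T$ also does not work: $r_T$ is a projection on the Fomin--Kirillov side and was used in Theorem \ref{p+} to verify identities \emph{among elements of} $\mathcal{A}'$, not to compute Bruhat actions on $\mathbb{A}$. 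The target $\h_i=\sum_{|J|=i}A_{w_J}$ is a genuinely affine object (the cyclically decreasing words $w_J$ involve $s_0$), there is no finite analogue of $\mathbb{B}$ inside $\mathbb{A}_f$, and the assertion that ``$p_m(\theta)$ acts as the power-sum down operator'' on the finite nilCoxeter algebra is not a known fact you can import. Finally, invoking the Hall-pairing adjoint $p_m^\perp(h_i)=h_{i-m}$ at the end presupposes that $D_{\p_m}$ restricted to $\mathbb{B}\cong\Lambda_{(k)}$ \emph{is} $p_m^\perp$ --- but that identification is the conclusion of Theorem \ref{main}, whose proof needs Theorem \ref{hi} as one of the three characterizing identities. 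You would be arguing in a circle.

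What is actually needed, and what the paper does, is a direct computation of $A_{w_J}\cdot\p_m$ for each $J\subset I$ with $|J|=i$. After reducing to $a=0$ by the rotation automorphism $A_i\mapsto A_{i-a}$ (under which $\h_i$ is invariant --- this is the correct way to get $a$-independence, not via Theorem \ref{sp}), one writes $w_J=w_{J'}w_{J''}$ where $J'=\{-j_1,\ldots,j_2\}$ is the maximal connected interval of $J$ through $0$; since $w_{J''}$ avoids $s_0$, the action passes through it. Then one checks from the inversion set of $w_{J'}=s_{j_2}\cdots s_{-j_1}$ that a monomial $[x_1y_1]\cdots[x_my_m]$ of $\p_m$ acts nontrivially on $A_{w_{J'}}$ only when every $x_b=-j_1$ and $0\le y_1<\cdots<y_m\le j_2+1$, in which case the output is $A_{w_{J\setminus\{y_1,\ldots,y_m\}}}$ with coefficient $+1$; a bijection between such pairs $(J,\x)$ and cyclically decreasing elements of length $i-m$ then yields $\h_{i-m}$. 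None of this combinatorial core appears in your plan, so as written the proposal does not constitute a proof.
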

\begin{proof} It is enough to show the statement for $a=0$ by taking an automorphism of $\mathbb{A}$ (resp. $\afk$) by sending $A_i$ to $A_{i-a}$ (resp. $[ij]$ to $[i-a,j-a]$). Note that $\h_i$'s are invariant under the automorphism.\\

 For $0 \in J \subset I$ with $|J|=i$, let $J'$ be the connected subset $\{-j_1,-j_1+1,\ldots,0,1,\ldots,j_2\}$ of $J$ with maximal size containing $0$. The cyclically decreasing element $w_J$ can be written of the form $w_{J'}w_{J''}$ where $J$ is a disjoint union of $J'$ and $J''$. By the construction of $J'$ and $J''$, $w_{J'}$ and $w_{J''}$ commute with each other and $w_{J''}$ does not contain $s_0$. If $J$ does not contain $0$, we set $w_{J'}=id$ and $w_{J''}=w_{J}$. Note that $w_{J'}$ is simply $s_{j_2}s_{j_2-1}\ldots s_{1-j_1}s_{-j_1}$ and the inversion set of $w_{J'}$ is $\{ (-j_1,i) \mid -j_1 < i \leq j_2+1 \}$. \\\\
Let us calculate $A_{w_J}\cdot \p_m$. Since $w_{J''}$ does not contain $s_0$, we have
$$A_{w_J}\cdot \p_m=\left(A_{w_{J'}}A_{w_{J''}}\right)\cdot\p_m=\left(A_{w_{J'}}\cdot\p_m\right) A_{w_{J''}}.$$

Let $\x=[x_1y_1]\cdots[x_my_m]$ be a noncommutative monomial of degree $m$ in $\e$ satisfying $x_b \leq 0 <y_b $ for all $b$.
Then one can show that $A_{w_{J'}}\cdot\left([x_1y_1]\cdots[x_my_m]\right)$ does not vanish if and only if $x_b=-j_1$ for all $b$ and $j_2+1 \geq y_m >\cdots >y_1 \geq 0$. In this case, we have
$$A_{w_{J'}}\cdot\left([x_1y_1]\cdots[x_my_m]\right)= A_{w_{J/\{y_1,y_2,\ldots,y_m\}}}.$$
Consider the set of all pairs $\Omega=\{(A_{w_J},\x)\}$ where $J$ is a subset of $I$ with $|J|=i$ and $\x=[x_1y_1]\cdots[x_my_m]$ satisfying $x_b=-j_1$ for all $b$ and $j_2+1 \geq y_m >\cdots >y_1 \geq 0$. Define the map from $\Omega$ to the set consisting of $A_{v}$'s for all cyclically decreasing elements $v$ of length $i-m$, by sending $(A_{w_J},\x)$ to $A_{w_{J/\{y_1,y_2,\ldots,y_m\}}}$.\\

We claim that the map is bijective by providing an inverse map. For $K\subset I$ with $|K|=i-m$ and a nonnegative integer $p$, define $K_p$ to be the union of $K$ and $\{0,1,\ldots,p\}$. Let $l$ be the minimal number satisfying $|K_l|=m$. Then we set $J=K_l$, $x_b=-j_1$ for all $b$, and $\{y_1<\cdots<y_m\}=J/K$. One can show that both maps are inverse each other. Since all $\x$'s that occur in the set $\Omega$ appear with a coefficient 1 in $\p_m$, the theorem follows. \end{proof}

\subsection{Proof of Theorem \ref{main}}\label{identitycap}
 In this subsection, we introduce BSS operators \cite{BSS13,BSS14} defined by Berg, Saliola, Serrano. Then we prove Theorem \ref{main} by using the relation between BSS operators and cap operators and theorems in \cite{BSS13,BSS14}. \\
 
For $a \in \mathbb{Z}$, let $\mathcal{G}^{(a)}$ be the edge-labelled oriented graph defined in \cite{BSS14} with the affine symmetric group as a vertex set: there is an edge from $x$ to $y$ labelled by $y(j)=x(i)$ whenever $\ell(x)=\ell(y)+1$ and there exists $i\leq a <j$ such that $y t_{ij}=x$. For such a pair $(x,y)$, we denote by $y\overset{y(j)}\rightarrow x$ and call a marked strong cover in \cite{LLMS10}. Denote $\mathcal{G}^{(0)}$ by $\mathcal{G}$. The \emph{ascent composition} of a sequence $a_1,a_2,\ldots,a_m$ is the composition $[i_1,i_2-i_1,\ldots,i_j-i_{j-1},m-i_j]$, where $i_1<i_2<\cdots<i_j$ are the ascents of the sequence.\\

If $w_0\overset{a_1}\longrightarrow \cdots \overset{a_m}\longrightarrow w_m$ is a path in $\mathcal{G}^{(a)}$, we denote it by $\comp(w_0\overset{a_1}\longrightarrow \cdots \overset{a_m}\longrightarrow w_m)$ the ascent composition of the sequence of labels $a_1,\ldots,a_m$. For a composition $J=[j_1,\ldots,j_l]$ of positive integers, let $|J|$ be the sum $j_1+\cdots+j_l$. Let $D_J^{(a)}$ be the operator in \cite{BSS14} defined by
$$D_J^{(a)}(A_w)=\sum_{\comp(w_0\overset{a_1}\longrightarrow \cdots \overset{a_m}\longrightarrow w_m)=J} A_{w_m}.$$
where the sum runs over all path in $\mathcal{G}$ of length $|J|$ starting at $w=w_0$ whose sequence of labels has the ascent composition $J$. We call $D_J^{(a)}$ the \emph{BSS operator}. We denote $D_J^{(0)}$ by $D_J$. When we use the BSS operators $D_J^{(a)}$ in this paper, either $a=0$ or $J=[1]$. \\

Berg, Saliola and Serrano \cite{BSS13} showed that $D_{[1]}^{(a)}$ is the same as the cap operator $D_{s_{a}}$ for $0\leq a <n$ as mentioned in Remark \ref{bssremark}.
In \cite{Lee14}, the author proved that for a positive integer $m$, a BSS operator $D_{[m]}$ for $[m]$ is the same as the cap operator $D_{s_{m-1}s_{m-2}\ldots s_1s_0}$, which implies the Pieri rule for the cohomology of the affine flag variety. Note that we will abuse notation $D_u$ so that it means one of the above operators depending on whether $u$ is an element in the affine FK algebra, an element in $\sn$ or a composition. \\

For $0\leq i<m<n$, we defined $J_{i,m}$ to be a composition $[m-i,1^i]$. Also recall that $\rho_{i,m}$ is $s_{-i}s_{-i+1}\ldots s_{-1}s_{m-1-i}s_{m-2-i}\ldots s_1s_0$. 
Now we show that $D_{\rho_{i,m}}$ and $D_{J_{i,m}}$ are the same.
\begin{thm}\label{leebss}
$$D_{\rho_{i,m}}=D_{J_{i,m}}.$$
\end{thm}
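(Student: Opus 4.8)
\textbf{Proof proposal for Theorem \ref{leebss}.}

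The plan is to recognize that both sides describe the same operator on $\mathbb{A}$ via the Pieri-type results already recalled in the excerpt, and to reduce the claim to a statement purely about compositions. Recall from the introduction that in \cite{Lee14} the author proved the BSS operator $D_{[m]}$ equals the cap operator $D_{s_{m-1}s_{m-2}\ldots s_1 s_0}$; this handles the case $i=0$, where $\rho_{0,m}=s_{m-1}\ldots s_1 s_0$ and $J_{0,m}=[m]$. For general $i$, the composition $J_{i,m}=[m-i,1^i]$ is obtained from $[m-i]$ by appending $i$ parts equal to $1$, and correspondingly $\rho_{i,m}=s_{-i}s_{-i+1}\ldots s_{-1}\cdot(s_{m-1-i}\ldots s_1 s_0)$ prepends $i$ simple generators with negative (shifted) indices. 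So the first step is to set up the right bookkeeping: identify how appending a part of size $1$ to a composition $J$ interacts, on the level of BSS operators $D_J$, with the operator $D_{[1]}^{(0)}=D_{s_0}$, and how prepending $s_{-1}, s_{-2},\ldots$ to $\rho_{0,m}$ interacts with cap-operator composition.

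The key technical input I would use is the factorization of $D_J$ coming from the definition of $D_J^{(a)}(A_w)$ as a sum over paths in $\mathcal{G}$ whose label sequence has ascent composition $J$: when the last part of $J$ is a singleton $1$, a path realizing $J$ factors as a path realizing $[m-i,1^{i-1}]$ followed by a single descent step, and the last descent step is exactly an application of $D_{[1]}^{(0)}$. Dually, using the cap-operator side, $D_{\rho_{i,m}}$ should factor because $\rho_{i,m}$ has a reduced decomposition in which the newly prepended generators $s_{-i},\ldots,s_{-1}$ commute appropriately with the tail; here one uses that the cap operators $D_u$ compose according to the coproduct structure on $\mathbb{A}$ and that $\rho_{i,m}$ admits the decomposition $s_{-i}\cdot \rho_{i-1,m}'$ for a suitable relabeling. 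Concretely I would prove the statement by induction on $i$: the base case $i=0$ is \cite{Lee14} as noted, and the inductive step consists of showing
\begin{align*}
D_{\rho_{i,m}}&=D_{s_{-i}}\,\square\,D_{\rho_{i-1,m}\text{-type}},\\
D_{J_{i,m}}&=D_{[1]}\,\square\,D_{J_{i-1,m}\text{-type}},
\end{align*}
for the appropriate composition operation $\square$ on $\homa$ (really just operator composition), together with an identification of the two "$i-1$"-pieces after an index shift (the shift $A_v\mapsto A_{v-1}$, $[ab]\mapsto[a-1,b-1]$ used, e.g., in the proof of Theorem \ref{hi}).

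The main obstacle I anticipate is the combinatorial matching of paths: one must check that every path in $\mathcal{G}$ with ascent composition $[m-i,1^i]$ arises uniquely from the inductive factorization, i.e. that the "extra" singleton steps are forced to occur at the end and cannot be reshuffled into the first block, and symmetrically that the cap-operator side has no spurious cancellation or multiplicity. This is precisely the point where the conventions about marked strong covers (distinguishing $t_{j_1,j_2}$ from $t_{j_1+n,j_2+n}$) and the ascent-composition rule must be handled carefully. A clean way around grinding through this by hand would be to instead verify the identity after applying both operators to the basis $\{\s_{w_0}A_{w_1}\}$ of Theorem \ref{basisa}, using Theorem \ref{skip0} (both cap and BSS operators for these $u$ commute past the $A_{w_1}$ factor, since $\rho_{i,m}$ and $J_{i,m}$ are "$0$-type") to reduce to the action on $\mathbb{B}$, and then on $\mathbb{B}$ invoking Theorem \ref{hi} together with the known Pieri rule for affine Schur functions to pin down both sides on the basis $\{\h_\lambda\}$. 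I expect the path-matching route to be the one actually intended, with the reduction-to-$\mathbb{B}$ route as a sanity check.
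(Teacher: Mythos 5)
Your overall strategy (induction on $i$ with base case $i=0$ from \cite{Lee14}) matches the paper's, but the inductive step as you formulate it is false, and the obstacle you flag at the end is not a technicality to be checked away --- it is exactly where the argument breaks. Neither side admits the clean factorization you propose. On the BSS side, concatenating a path whose labels realize $[m-i,1^{i-1}]$ with one further step does \emph{not} always produce ascent composition $[m-i,1^i]$: the junction may fail to be an ascent, in which case the last parts merge. The correct statement is the ribbon-type recursion the paper uses, $D_J=D_{[j_1]}\circ D_{[j_2,\ldots,j_l]}-D_{[j_1+j_2,\ldots,j_l]}$, which applied to $J_{i,m}$ gives $D_{J_{i,m}}=D_{[m-i]}\circ D_{[1^i]}-D_{J_{i-1,m}}$; the subtracted term is genuinely present and cannot be argued away. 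Symmetrically, on the cap-operator side $D_{s_{-i}}\circ D_{\rho_{i-1,m}\text{-type}}$ is not $D_{\rho_{i,m}}$: cap operators compose like cup products of Schubert classes, and $\xi^{u}\cup\xi^{v}$ is a sum of classes, not a single one. The paper closes the induction by reducing the needed cap-operator identity to
$$\xi^{\rho_{0,m-i}}\cup\xi^{s_{-i+1}\cdots s_{-1}s_0}=\xi^{\rho_{i,m}}+\xi^{\rho_{i-1,m}},$$
which lives in $H^*(\gr)\cong\Lambda^{(k)}$ because all elements involved are $0$-Grassmannian, and is there just the hook identity $s_{(m-i)}\,s_{(1^i)}=s_{(m-i,1^i)}+s_{(m-i+1,1^{i-1})}$. (The paper also needs the case $i=m-1$, i.e.\ $D_{[1^i]}=D_{s_{-i+1}\cdots s_{-1}s_0}$, obtained from \cite{Lee14} via the automorphism $s_j\mapsto s_{-j}$; your setup does not supply this second input.)

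Your fallback route (restrict to the basis $\{\s_{w_0}A_{w_1}\}$ and reduce to $\mathbb{B}$) also has a gap as stated: Theorem \ref{skip0} is a statement about the Bruhat operator $D_{\p_m}$, not about cap operators or individual BSS operators, so it cannot be invoked to let $D_{\rho_{i,m}}$ or $D_{J_{i,m}}$ slide past the $A_{w_1}$ factor. For the BSS operators the analogous property is \cite[Theorem 4.8]{BSS14}, but for the cap operators $D_{\rho_{i,m}}$ you would have to prove such a compatibility separately; it is not available for free from the results quoted in the paper.
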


\begin{proof}
Note that for $i=0,m-1$, it follows from the theorems in \cite{Lee14}. Indeed, for $i=0$ the theorem is the main theorem in \cite{Lee14}, and for $i=m-1$ one can show the theorem by applying the automorphism of $\sn$ and $\mathbb{A}$ sending $s_i$ (resp. $A_i$) to $s_{-i}$ (resp. $A_{-i}$).\\

 For a composition $J=[j_1,j_2,\ldots,j_l]$, observe from the definition of $D_J$ that
$$D_J=D_{j_1}\circ D_{[j_2,\ldots,j_l]} - D_{[j_1+j_2,\ldots,j_l]}.$$
By setting $J=J_{i,m}$, we have 
$$ D_{J_{i,m}}=D_{[m-i]}\circ D_{[1^i]} - D_{J_{i-1,m}}.$$
Since it is already known that $D_{[m-i]}=D_{\rho_{0,m-i}}$ and $D_{[1^i]}=D_{s_{-i+1}s_{-i+2}\cdots s_{-1}s_{0}} $, it is enough to show that 
$$ D_{\rho_{i,m}}=D_{\rho_{0,m-i}}\circ D_{s_{-i+1}s_{-i+2}\cdots s_{-1}s_{0}} - D_{\rho_{i-1,m}}.$$
Since $D_w$ can be identified with the Schubert class $\xi^w$, the above identity is equivalent to  
$$\xi^{\rho_{i,m}}=\xi^{\rho_{0,m-i}}\xi^{s_{-i+1}s_{-i+2}\cdots s_{-1}s_{0}} - \xi^{\rho_{i-1,m}}.$$\\
The above equality can be shown in $\Lambda^{(k)}\cong H^*(\gr)$, since each Schubert class appearing above corresponds to a Schur function. In fact, the calculation of Schur functions $s_{J_{0,m-i}}s_{J_{i-1,i}}=s_{J_{i,m}}+s_{J_{i-1,m}}$ implies the above identity.
\end{proof}\\

{\bf \emph{Proof of Theorem \ref{main}}.}
We show Theorem \ref{main} by proving the identity
$$D_{\p_m}=\sum_{i=0}^{m-1}(-1)^i D_{J_{i,m}}.$$
First of all, they both satisfy the following lemma:
\begin{lem} For $D=D_{\p_m}$ or $D_{J_{i,m}}$, $w\in \sn$ and a $0$-Grassmannian element $v$, we have
$$D(A_wA_v)=D(A_w)A_v.$$
\end{lem}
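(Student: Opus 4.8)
The plan is to verify the lemma separately for the two operators $D_{\p_m}$ and $D_{J_{i,m}}$, since both reductions have already been prepared in the excerpt. For $D=D_{\p_m}$ the statement is essentially Theorem \ref{skip0}: writing $A_v = A_{v'}$ with $v'$ a $0$-Grassmannian element, one wants $D_{\p_m}(A_w A_{v'}) = D_{\p_m}(A_w)A_{v'}$. But Theorem \ref{skip0} is stated for $w' \in S_n$, not for a $0$-Grassmannian element, so I first need to reconcile the two. The cleanest route is to go back to the operator-level identity $A_j \cdot \p_m = \p_m A_j$ for $j \neq 0$ (the second identity of Theorem \ref{ap} with $a=0$): since a reduced word for the $0$-Grassmannian element $v$ begins with letters $s_j$, $j\neq 0$, reading right-to-left... actually one must be careful about which end of the word is "free." A $0$-Grassmannian element has all reduced words \emph{ending} in $s_0$; the relevant statement for the right action is rather about elements of $S_n$. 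So the better formulation is: decompose $wv = (wv)_0 (wv)_1$ and instead prove the lemma by the recursive structure. I would argue directly: if $v$ is $0$-Grassmannian then in particular $v \in \sn$, but we need $A_w A_v$ with $\ell(wv) = \ell(w) + \ell(v)$ implicitly (otherwise $A_wA_v = 0$ and there is nothing to prove). The key point is that $D_{\p_m}$ only moves the "$S_n$-part", i.e. $D_{\p_m}$ acts on the $\mathbb{B}$-component; this is exactly the content of Theorem \ref{p}: $D_{\p_m}(\h A_w) = D_{\p_m}(\h)A_w + \h D_{\p_m}(A_w)$. Using the basis $\s_{w_0}A_{w_1}$ of Theorem \ref{basisa} and the fact that $\s_{w_0}$ lies in $\mathbb{B}$, one reduces $D_{\p_m}(A_wA_v)$ to the case where $A_w$ itself is replaced by a product of the form $\s_{u_0}A_{u_1}$.

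For the first case I would therefore proceed as follows. First write $A_w A_v$ in the basis of Theorem \ref{basisa}: $A_w A_v = \sum c_u \s_{u_0} A_{u_1}$, and similarly expand $A_w = \sum d_{w'} \s_{w'_0}A_{w'_1}$. Because $v$ is $0$-Grassmannian, right-multiplication by $A_v$ interacts only with the $\mathbb{B}$-part: more precisely, $\s_{w'_0} A_{w'_1} A_v$ — here $A_{w'_1}$ with $w'_1 \in S_n$ and $A_v$ with $v$ $0$-Grassmannian; I claim the product lands in $\mathbb{B}\cdot A_{(\cdot)}$ with the $S_n$-part unchanged, essentially because a $0$-Grassmannian $v$ can be absorbed into the $\mathbb{B}$-factor. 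This is precisely where I expect the main obstacle: justifying cleanly that multiplying a basis element $\s_{u_0}A_{u_1}$ on the right by $A_v$ (for $0$-Grassmannian $v$) produces a $\mathbb{Z}$-combination of $\s_{u'_0}A_{u_1}$ with the \emph{same} $A_{u_1}$. Once that "stability of the $S_n$-part under right multiplication by $0$-Grassmannian elements" is in hand, the lemma for $D_{\p_m}$ follows by combining Theorem \ref{p} (to peel off the $\mathbb{B}$-factor) with Theorem \ref{skip0} (for the $S_n$-factor) — or, since the $S_n$-part is unchanged, even just Theorem \ref{p} applied twice.

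For $D = D_{J_{i,m}}$ the argument should be more straightforward: $D_{J_{i,m}}$ is a BSS operator $D_J^{(0)}$, defined via paths in the graph $\mathcal{G} = \mathcal{G}^{(0)}$ whose edges $x \to y$ use transpositions $t_{jk}$ with $j \leq 0 < k$. If $v$ is $0$-Grassmannian and $\ell(wv) = \ell(w) + \ell(v)$, then every marked strong cover issuing from $wv$ with index $(j,k)$, $j \leq 0 < k$, must act inside the "$w$ part", because the reduced word of $v$ uses only $s_1, \ldots, s_{n-1}$ and so removing an $i\leq 0<j$ transposition from $wv$ cannot touch $v$. More precisely, one shows by the subword property that a marked strong cover $wv \to u$ (index with $j\leq 0<k$) forces $u = u'v$ with $w \to u'$ a marked strong cover of the same index, and conversely. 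Iterating over a path of length $|J_{i,m}|$, the entire path stays of the form $w_\bullet v$, and the ascent-composition condition on the label sequence is identical whether read off from the $w_\bullet$-path or the $w_\bullet v$-path. Hence $D_{J_{i,m}}(A_wA_v) = \sum_{\text{path from }w} A_{w_m v} = D_{J_{i,m}}(A_w)A_v$, which is the claim. The single nontrivial input here is the combinatorial fact that a $0$-shifted marked strong cover cannot involve the $0$-Grassmannian suffix, which is a direct consequence of the characterization $u \lessdot x \iff x = u t_{ab}$ together with the reduced-word structure of $0$-Grassmannian elements; I would state this as a small sublemma and prove it in one or two lines before running the induction on $|J|$.
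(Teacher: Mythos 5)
Your proposal never closes, and it is worth isolating why. Taken literally, with $v$ a $0$-Grassmannian element, the identity is false: for $n=3$, $m=1$, $w=\id$, $v=s_0$ one has $D_{\p_1}(A_{\id}A_{s_0})=D_{\p_1}(A_{s_0})=A_{\id}$ (the marked strong cover of index $(0,1)$), while $D_{\p_1}(A_{\id})A_{s_0}=0$. The paper's one-line proof cites Theorem \ref{skip0}, which is stated for $w'\in S_n$, and the lemma is used afterwards to peel the factor $A_{w_1}$, $w_1\in S_n$, off the basis elements $\s_{w_0}A_{w_1}$ of Theorem \ref{basisa}; so the hypothesis you should be working with is $v\in S_n$, and ``$0$-Grassmannian'' in the statement is a slip. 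You correctly sensed this tension but never resolved it, and the device you propose to bridge it --- that right multiplication of $\s_{u_0}A_{u_1}$ by $A_v$ for $0$-Grassmannian $v$ yields a $\mathbb{B}$-combination with the \emph{same} $S_n$-part $A_{u_1}$ --- is false: $A_{s_1}A_{s_0}=A_{s_1s_0}$, and $s_1s_0$ is itself $0$-Grassmannian, so its expansion in the basis of Theorem \ref{basisa} has trivial $S_n$-part rather than $A_{s_1}$. That branch of your argument cannot be completed.

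For the intended statement ($v\in S_n$) the $D_{\p_m}$ case is exactly Theorem \ref{skip0}, which follows from $A_j\p_m=\p_m A_j$ for $j\not\equiv 0$ (Theorem \ref{ap}) and the evaluation $\phi$; no basis manipulation is needed. Your treatment of $D_{J_{i,m}}$ --- a label-preserving bijection between paths in $\mathcal{G}$ starting at $wv$ and paths starting at $w$, using that a reflection $t_{jk}$ with $j\le 0<k$ never lies in $S_n$ and that conjugation by $v\in S_n$ preserves the condition $j\le 0<k$ --- is a genuinely different route from the paper's, which instead writes $D_{J_{i,m}}$ recursively in terms of the operators $D_{[a]}$ via $D_J=D_{j_1}\circ D_{[j_2,\ldots,j_l]}-D_{[j_1+j_2,\ldots,j_l]}$ and quotes \cite[Theorem 4.8]{BSS14} for $D_{[a]}$. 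But only the forward direction of your cover bijection (a marked strong cover of $wv$ must delete a letter of the $w$-part) is a one-line consequence of the strong exchange property; the converse --- that every cover $w\to wt_{j'k'}$ with $j'\le 0<k'$ induces a cover $wv\to wt_{j'k'}v$, i.e.\ that $\ell(wt_{j'k'}v)=\ell(wt_{j'k'})+\ell(v)$ --- fails for general reflections, requires a real argument, and is essentially the content of the cited BSS result. As written, then, the proposal has a fatal gap in the first case and an unproved step in the second.
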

\begin{proof} For $D=D_{\p_m}$ it follows from Theorem \ref{skip0}, and for $D=D_{J_{i,m}}$ it follows from the fact that $D_{J_{i,m}}$ are generated by $D_{[a]}$ for $a\geq 1$ and the lemma holds for $J=[a]=J_{0,a}$ by \cite[Theorem 4.8]{BSS14}.\end{proof}

For any composition $J$, the restriction of $D_J$ to $\mathbb{B}$ is $\overline{s_J}^\perp$ where $s_J$ is the ribbon Schur function indexed by $J$ \cite[Theorem 4.9]{BSS14}. By letting $J=J_{i,m}$, the restriction of $D_{J_{i.m}}$ is $\overline{s_{J_{i,m}}}^\perp$, where $s_{J_{i,m}}$ is the Schur function for the hook shape $[m-i,1^i]$.\\
Recall the following theorems about the power sum symmetric functions $p_m$: (see \cite{Sta99} for instance):
$$p_m=\sum_{i=0}^{m-1}(-1)^i s_{J_{i,m}},$$
$$p_m^\perp(fg)=p_m^\perp(f)g+f p_m^\perp(g),$$
$$p_m^\perp(h_i)=h_{i-m}$$
for any symmetric functions $f,g$. Therefore, we proved that $D=\sum_{i=0}^{m-1}(-1)^i D_{J_{i,m}}$ satisfies the following identities.
\begin{enumerate}
\item $D(fg)=D(f)g+f D(g)$ for $f,g \in \mathbb{B}.$
\item $D(hA_v)=D(h)A_v$ for $h\in \mathbb{B}$ and a 0-Grassmannian element $v$.
\item $D(\h_i)=\h_{i-m}$
\end{enumerate}
Note that the above identities uniquely determine $D$ as an action on $\mathbb{A}$. Since $\p_m$ also satisfies the above identities by Theorems  \ref{skip0}, \ref{p}, \ref{hi}, the theorem follows. \qed \\

\section{Divided difference operators}\label{divided}
In this section, we define the divided difference operators on the affine FK algebra and describe the connection between the divided difference operators and operators defined by Kostant and Kumar \cite{KK86} . \\
\subsection{Operators defined by Kostant and Kumar and the affine nilCoxeter algebra}
The operator $\partial_i : H^*(\fl) \rightarrow H^*(\fl)$ is defined as follows by Kostant and Kumar in \cite{KK86}. 

\begin{definition} \label{KK}For $i\in \mathbb{Z}/n\mathbb{Z}$, let $\xi^w$ be the Schubert class for $w$ in $H^*(\fl)$. Then  
$$\partial_{i}\xi^w = \begin{cases} \xi^{ws_i}& \text{ if } \ell(ws_i)= \ell(w) -1,\\
 0 &\text{ otherwise.}\end{cases}$$
\end{definition}

For $w =s_{i_1}\ldots s_{i_l}$, $\partial_w$ is defined by $\partial_{i_1}\ldots \partial_{i_l}$. Note that in \cite{KK86}, $\partial_w$ is denoted by $A_w$ and this is how they defined nilHecke algebra and nilCoxeter algebra. Therefore, there is an action of $\mathbb{A}$ on $H^*(\fl)$ defined by Definition \ref{KK}.\\
When we consider $f$ in $H^*(\fl)$ as an element in $\homq$, we denote $\partial_w f$ by $A_i\bullet f$, and we call this action $\bullet$-action. Kostant and Kumar showed that
$$(A_i\bullet f)(A_w)= f(A_wA_i).$$
See \cite{KK86}, \cite[Chapter 4]{LLMSSZ} for details. 

\subsection{Divided difference operators on $\afk$.}
Recall that for $w \in \sn$, the action of $w$ on $\afk$ is given by $w [ij] = [w(i) w(j)]$. \\

For integers $i<j$ with distinct residues mod $n$, let $\Delta_{ij}$ be the divided difference operator as a left action on $\afk$ defined by the following:
\begin{enumerate}
\item For $\x, \y \in \afk$, $\Delta_{ij}(\x\y)=\Delta_{ij}(\x)\y+t_{ij}(\x)\Delta_{ij}(\y)$.\\
\item $\Delta_{ij} ([ab]) =1$ when $[ij]=[ab]$ in $\afk$, and $0$ otherwise.

\end{enumerate}
Note that the definition of $\Delta_{ij}$ generalizes the operators denoted by the same in \cite{FK99}. The divided difference operator $\Delta_{ij}$ is well-defined because $\Delta_{ij}$ stabilizes the two-sided ideal generated by relations (a)-(e), which is easy to check. \\

Recall the map $D: \afk \rightarrow \homa$ defined by the Bruhat operator, and $\mathcal{C}$ is the subalgebra of $\afk$ generated by $\atheta_i$'s and $\p_m$'s. Since we have $\mathcal{C} \subset \afk$, we have the restriction map $D:\mathcal{C} \rightarrow H^*(\fl) \subset \homa$. Note that the map $D:\mathcal{C}\rightarrow H^*(\fl)$ is surjective, because of Theorem \ref{main} and the discussion below Theorem \ref{main}.\\
 
Let $\phi_{id,*}$ be the surjection from $\homa$ to $\homq$ induced from the surjection $\mathbb{A}\rightarrow \mathbb{Q}$. By applying $\phi_{id,*}$, one can define a map $D^{id}:=\phi_{id,*}\circ D: \mathcal{C} \rightarrow \homq \cong H^*(\fl)$ defined by $D^{id}_\x=\phi_{id,*}\circ D_\x$ for $\x \in \mathcal{C}$. For the rest of the section, we show that 

\begin{thm} \label{ddosame}
The induced operator acting on $\homq$ from the divided difference operator $\Delta_i:= \Delta_{i,i+1}$ on $\mathcal{C}$ is the same as the action $A_i\bullet$. 
\end{thm}

\begin{proof}
We first need the following lemma:
\begin{lem} \label{dist}
 Let $w \in \sn$ and $\x \in \afk$. In $\mathbb{A}(\afk)$, we have
$$A_i \x = s_i(\x) A_i +  \Delta_{i}(\x).$$
\end{lem}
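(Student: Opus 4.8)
The plan is to verify the identity $A_i \x = s_i(\x) A_i + \Delta_i(\x)$ in $\mathbb{A}(\mathcal{E})$ first on generators $\x = [ab]$, and then bootstrap to arbitrary $\x \in \mathcal{E}$ by showing both sides are (left) derivations-twisted-by-$s_i$ in $\x$, i.e. that the putative identity is stable under products. Concretely, I would proceed as follows.

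\textbf{Step 1: the base case $\x = [ab]$.} Here I unwind the definition of the equivariant Bruhat action
$$A_w[ab] = \begin{cases} s_w([ab])A_w + A_{wt_{ab}} & \text{if } \ell(wt_{ab}) = \ell(w)-1 \\ s_w([ab])A_w & \text{otherwise.} \end{cases}$$
Take $w = s_i$. Then $\ell(s_i t_{ab}) = \ell(s_i) - 1 = 0$ happens precisely when $t_{ab} = s_i$, i.e. when $[ab] = [i,i+1]$ (using relation (e) to normalize; note $s_i = t_{i,i+1}$), in which case $A_{s_i t_{ab}} = A_{\id} = 1$. Otherwise the extra term vanishes. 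On the other hand $\Delta_i([ab]) = \Delta_{i,i+1}([ab])$ equals $1$ exactly when $[ab] = [i,i+1]$ and $0$ otherwise, by clause (2) of the definition of $\Delta_{ij}$. Since $s_{s_i} = s_i$, the term $s_w([ab])A_w = s_i([ab])A_i$ matches $s_i(\x)A_i$. Hence $A_i[ab] = s_i([ab])A_i + \Delta_i([ab])$, which is the claim on generators. (One should be slightly careful that when $[ab]$ and $[i,i+1]$ agree only after applying relation (e), the two sides still match; this is immediate since both the equivariant action and $\Delta_i$ are defined on $\afk$ and respect relation (e).)

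\textbf{Step 2: closure under products.} Suppose the identity holds for $\x$ and for $\y$; I claim it holds for $\x\y$. Compute, using associativity in $\mathbb{A}(\mathcal{E})$ and then the inductive hypotheses,
$$A_i(\x\y) = (A_i\x)\y = \big(s_i(\x)A_i + \Delta_i(\x)\big)\y = s_i(\x)(A_i\y) + \Delta_i(\x)\y = s_i(\x)\big(s_i(\y)A_i + \Delta_i(\y)\big) + \Delta_i(\x)\y.$$
Since $s_i$ is an algebra homomorphism on $\mathcal{E}$, $s_i(\x)s_i(\y) = s_i(\x\y)$, and by the twisted Leibniz rule (clause (1), with $t_{i,i+1} = s_i$ as the relevant transposition) $\Delta_i(\x\y) = \Delta_i(\x)\y + s_i(\x)\Delta_i(\y)$. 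Substituting gives $A_i(\x\y) = s_i(\x\y)A_i + \Delta_i(\x\y)$, as desired. Combined with $\mathbb{Q}$-linearity of all the operations involved, Step 1 and Step 2 establish the identity for every element of $\mathcal{E}$ (more precisely, on the image of the noncommutative monomials, which span $\mathcal{E}$ topologically; one checks the identity passes to the relevant infinite sums since only finitely many terms act nontrivially on any fixed $A_w$, by the validity condition defining $\mathcal{E}$).

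\textbf{Main obstacle.} The routine algebra is Step 2; the genuinely delicate point is Step 1, specifically making sure the bookkeeping of indices modulo $n$ is consistent — that the transposition appearing in $A_i \cdot$ as an equivariant Bruhat operator is exactly $t_{i,i+1}$ in the normalization compatible with relation (e) and with the convention $s_w = s_{s_i} = s_i$, and that "$\ell(s_i t_{ab}) = 0$" really does single out $[ab] = [i,i+1]$ and nothing else. A secondary subtlety is the convergence/well-definedness issue for infinite $\x$: one must invoke that $\x \in \mathcal{E}$ precisely guarantees $A_w \cdot \x \in \mathbb{A}$ is a finite sum, so the formal manipulations in Steps 1–2 are legitimate term by term and the equality of the two (a priori infinite) expressions in $\mathbb{A}(\mathcal{E})$ holds after pairing with any $A_w$, hence holds in $\mathbb{A}(\mathcal{E})$ since $\{A_w\}$ is an $\mathcal{E}$-basis.
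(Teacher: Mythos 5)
Your proof is correct and takes essentially the same approach as the paper: the paper's proof is the single sentence that the lemma follows from the definition of the equivariant Bruhat action, and your check on generators $[ab]$ (where $\ell(s_i t_{ab})=0$ forces $t_{ab}=s_i$, matching the defining property of $\Delta_{i}$) together with the twisted Leibniz induction on products is exactly the natural unwinding of that statement. The extra care with relation (e) and with convergence for infinite elements of $\mathcal{E}$ goes slightly beyond what the paper records but does not change the argument.
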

The lemma follows from the definition of the equivariant Bruhat action. Then Theorem \ref{ddosame} is equivalent to the identity
$$ D^{id}_{\Delta_i(\x)}(A_w)= D^{id}_{\x}(A_wA_i)=\left(A_i\bullet D^{id}_{\x}\right)(A_w).$$

By Lemma \ref{dist}, we have

$$ D_{\Delta_i(\x)}(A_w)=  D_{\x}(A_wA_i) -D_{s_i(\x)}(A_w)A_i.$$
By applying $\phi_{id,*}$, the theorem follows because $\phi_{id,*}(D_{s_i(\x)}(A_w)A_i)=0$.
\end{proof}

\section{Affine Schubert polynomial}
In this section, we define the affine Schubert polynomials using the divided difference operators.\\

Recall the isomorphisms 
\begin{align*} H^*(\fl)&\cong  H^*(\gr) \otimes_{\mathbb{Q}} H^*(Fl_n)\\
&\cong R_n:=\Lambda^{(k)} \otimes \mathbb{Q}[x_1,\cdots,x_{n}]/  \{ s_i (x)=0 \quad \forall i \}.
\end{align*}
Recall that $\Lambda^{(k)}$ is generated by $\overline{p_m}$, which we denote by $p_m$. Under the isomorphism $\homq \cong H^*(\fl) \cong R_n$, the operator $D_{\atheta_i}$ sends to $x_{i}$ since Schubert polynomial for $\xi^{s_i}_{Fl_n}$ is $x_1+\cdots+x_i$ in $ \mathbb{Q}[x_1,\cdots,x_{n}]/  \{ s_i (x)=0 \quad \forall i \}$ and $ev^*(\xi^{s_i}_{Fl_n})= \xi^{s_{i}}-\xi^{s_0}$ by Theorem \ref{degree1image}. Moreover, under the same isomorphism, the MN operator $D_{\p_m}$ sends to $p_m$. Since we defined the divided difference operators $\partial_i$ on $H^*(\fl)$, the same $\partial_i$ gives an action on $R_n$, and by translating identities about how $s_i$ and $\Delta_i$ act on $\atheta_i$ and $p_m$ provides Definition \ref{bgg}. To be more precise, the following implies Definition \ref{bgg}.

\begin{thm}
For $i\in \mathbb{Z}/n\mathbb{Z}$, the Weyl group action $s_i$ and $\Delta_i$ on $\mathcal{C}$ can be uniquely defined by the following properties.
\begin{enumerate}
\item For $f,g \in \mathcal{C}$, we have $s_i(fg)=s_i(f)s_i(g)$ and $\Delta_i$ satisfies the Leibniz's rule: for $f,g \in \mathcal{C}$, we have
$$\Delta_i(fg)= \Delta_i(f)g + s_i(f) \Delta_i(g).$$
\item For nonzero $i=1,\cdots, n-1$ and for all $m$, we have 
\begin{align*}
s_i(\p_m)&=\p_m \\
\Delta_i(\p_m)&=0.
\end{align*}
\item For $i=0$, we have
\begin{align*}
s_0(\p_m)&= \p_m+\atheta_1^m-\atheta_0^m\\
\Delta_0(p_m)&= \sum_{j=0}^{m-1} \atheta_1^{m-1-j}\atheta_0^j
\end{align*}

\item For all $i,j\in \mathbb{Z}/n\mathbb{Z}$, we have 
\begin{align*}
s_i(\atheta_j)&=\atheta_{s_i(j)} \\
\Delta_i(\atheta_j)&=\delta_{ij}-\delta_{i,j+1}.
\end{align*}

\end{enumerate}
\end{thm}
\begin{proof}
$(1)$ is clear from the definition of $s_i$ and $\Delta_i$. The first identity in $(2)$ is proved in Theorem \ref{sp} and the second identity $\Delta_i(\p_m)=0$ is true since there is no $[i,i+1]$ in any term in $\p_m$ and every $[i,j]$ in each term of $\p_m$ satisfies $i\leq 0 < i+1$. Recall that $i \neq 0$ modulo $n$ for $(2)$.\\

$(4)$ is obvious from the definition of $\atheta_i$. For $(3)$, the first identity directly follows from Theorem \ref{sp}, and we have
$$\Delta_{0}(\p_m)=\Delta_{0}(\p_m(1)-\atheta_1^m)=\Delta_{0}(-\atheta_1^m)=\sum_{j=0}^{m-1} \atheta_0^{m-1-j} \atheta_1^j$$
so the second identity follows. The last equality $\Delta_{0}(-\atheta_1^m)=\sum_{j=0}^{m-1} \atheta_0^{m-1-j} \atheta_1^j$ follows from both $(1)$ and $(4)$.
\end{proof}\\

  Note that when we act the usual divided difference operator $\partial_0= {1-s_0 \over x_0-x_1}$ on $p_m=\cdots + x_{-2}^m+ x_{-1}^m+x_{0}^m$, we get the same formula. Therefore, one should think of elements in $\Lambda^{(k)}$ as symmetric functions in variables $x_0,x_{-1},x_2, \ldots$ so that Definition \ref{bgg} becomes very natural. This idea is also related to Remark \ref{pinf} that $\p_m(i)$ are ``roughly" the same as $\sum^i_{j=-\infty} \atheta_j^m$.  \\

By defining divided difference operators in $R_n$ compatible with the operator $\partial_i$ (or $A_i$) defined by Kostant and Kumar, we obtain polynomial representatives of Schubert classes $\xi^w$:
 
\begin{thm}[Theorem \ref{asp}]
For $w \in \sn$, there exists a unique homogeneous element $\asp_w$ of degree $\ell(w)$ in $R_n$ satisfying
$$\partial_i\asp_w=\begin{cases} \asp_{ws_i} & \text{ if } \ell(ws_i)=\ell(w)-1 \\ 0 & \text{ otherwise}.\end{cases}$$
for $i\in \mathbb{Z}/n\mathbb{Z}$, with the initial condition $\asp_{id}=1$. Moreover, under the isomorphism $ H^*(\fl) \cong R_n$, a Schubert class $\xi^w$ sends to $\asp_w$. We call $\asp_w$ an affine Schubert polynomial for $w$.
\end{thm}

Note that the divided difference operators $\partial_i$ for nonzero $i$ restricted on $R_{Fl_n}=\mathbb{Q}[x_1,\cdots,x_{n}]/  \{ s_i (x)=0 \quad \forall i \}$ is the same as the divided difference operators defined by Lascoux and Sch\"{u}tzenberger \cite{LS82}. This implies that the affine Schubert polynomial for $w \in S_n$ is just the Schubert polynomial for $w$. Moreover, the affine Schubert polynomial $\asp_w$ for $0$-Grassmannian element $w$ is the same as the affine Schur functions, and for $w \in \sn$ the projection from $R_n$ to $R_{\gr}$ sends $\asp_w$ to the affine Stanley symmetric functions $\tilde{F}_w$ \cite{Lam08}, so Theorem \ref{restriction} follows.\\

The affine Schubert polynomials can be computed from the affine Schur functions. For $w \in \sn$, let $v$ be an element in $\sn$ such that $wv$ is 0-Grassmannian with $\ell(wv)=\ell(w)+\ell(v)$. There is always such a $v$ for any $w$ \cite{MS15}. Let $\f_{wv}$ be the affine Schur function for $wv$.
Then we have
\begin{align*}
\asp_w&= \partial_{v^{-1}} \asp_{wv}=\partial_{v^{-1}}\f_{wv}.
\end{align*}
Note that there is a formula for the expansion of the affine Schur functions in terms of power sum symmetric functions \cite{BSZ10}, so that one can compute the affine Schubert polynomials from Definition \ref{bgg}, \ref{asp}. One can also write a monomial expansion of affine Schubert polynomials, and this will appear in \cite{LLS15}.

\subsection{Examples}
$n=2$ case: For a positive integer $a$ and $i \in \{0,1\}$, define $w_{a,i}$ be the unique $i$-Grassmannian element of length $a$. Then
\begin{align*}
\asp_{w_{a,0}}&= m_{1,1, \ldots,1}= { p_1^n \over n!}\\
\asp_{w_{a,1}}&= { p_1^n \over n!}+{{p_1^{n-1}\over (n-1)!} x_1}
\end{align*}
$n=3$ case:
\begin{align*}
\asp_{id}&=1\\
\asp_{s_0}&=  p_1\\
\asp_{s_1}&=  p_1+x_1 \\
\asp_{s_2}&=  p_1+x_1+x_2\\
\asp_{s_1s_0}&= \frac{1}{2}(p_1^2+p_2)\\
\asp_{s_2s_1}&=\frac{1}{2} ((p_1+x_1)^2+(p_2+x_1^2))\\
\asp_{s_2s_1s_0}&= \frac{1}{3} p_3+ \frac{1}{2} p_2 p_1 + \frac{1}{6} p_1^3.
\end{align*}

\section{Nonnegativity conjecture}

In this section, we discuss the nonnegativity conjecture for Schubert polynomials in the Fomin-Kirillov algebra \cite{FK99}.\\

Although arguments in Section 6 show that the map $D$ from $\mathcal{C}$ to $H^*(\fl)$ is surjective, there is strong evidence that $D$ is in fact an isomorphism. First of all, all relations between Bruhat operators $D_{\atheta_i}$ for Dunkl elements are proved at the level of the affine FK algebra, namely the commutativity between Dunkl elements (Theorem \ref{comm}) and the vanishing of symmetric functions evaluated at Dunkl elements (Theorem \ref{sym0}). To show that $D$ is an isomorphism, we need to show the following conjecture.

\begin{conj}\label{com}
The commutativity between Dunkl elements and MN elements follows from the relations (a)-(e). This implies that the map $D: \mathcal{C}\cong H^*(\fl)$ is an isomorphism.
\end{conj}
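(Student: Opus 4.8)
The plan is to prove the conjecture in two stages. The first, and by far the harder, is the purely algebraic assertion that, modulo the relations (a)--(e), the Dunkl elements $\atheta_i$ and the Murnaghan--Nakayama elements $\p_m$ commute pairwise in $\afk$ (commutativity of the $\atheta_i$ among themselves being Theorem~\ref{comm}, and MN--MN commutativity following by the same method as the Dunkl--MN case below). The second, essentially formal, is to deduce that $D\colon\mathcal C\to\mathcal R$ is an isomorphism. It is worth recording that commutativity of these operators \emph{as Bruhat operators} is already known from Theorem~\ref{main}, since $\mathcal R$ lies inside the commutative ring $H^*(\fl)\subset\homa$; the whole content of the conjecture is to upgrade this to an identity in $\afk$, whose Bruhat action is not faithful.

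For the first stage I would reduce to a single commutator. The translation $[ij]\mapsto[i-c,j-c]$ is an automorphism $\tau_c$ of $\afk$ (used already in the proof of Theorem~\ref{hi}) with $\tau_c(\atheta_j)=\atheta_{j-c}$ and $\tau_c(\p_m(0))=\p_m(-c)$. By Theorem~\ref{p+} the difference $\p_m(a)-\p_m(0)$ is a $\mathbb{Z}$-linear combination of powers $\atheta_i^m$, which commutes with every $\atheta_\ell$ by Theorem~\ref{comm}; hence $[\atheta_\ell,\p_m(a)]$ is independent of $a$, and applying $\tau_c$ gives $[\atheta_{j-c},\p_m]=\tau_c\bigl([\atheta_j,\p_m]\bigr)$. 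Therefore it suffices to prove $[\atheta_0,\p_m]=0$ in $\afk$ for each $1\le m<n$.

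The heuristic for this vanishing is a telescoping argument: by the connected-tree description of $\p_m$ together with Corollary~\ref{distinctres} (compare Remark~\ref{pinf}), in $\afk$ one has, in the appropriate completed sense, $\p_m=\sum_{i\le 0}\atheta_i^m$, and since the $\atheta_i$ commute in $\afk$ so do their powers, whence
\[
[\atheta_0,\p_m]=\Bigl[\atheta_0,\ \sum_{i\le 0}\atheta_i^m\Bigr]=\sum_{i\le 0}[\atheta_0,\atheta_i^m]=\sum_{i\le 0}0=0 .
\]
The real obstacle — where I expect almost all of the work to lie — is to make this infinite cancellation legitimate inside $\afk$ as defined: the two-sided ideal generated by (a)--(e) is itself generated by the already infinite relations (d), and it is not clear a priori that it absorbs the convergent series $\sum_{i\le 0}[\atheta_0,\atheta_i^m]$. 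Two routes present themselves. One is to prove that this ideal is closed in the inverse-limit topology on $\mathcal E$, so that a limit of elements of the ideal again lies in it. The other, more concrete, is to exhibit a \emph{finite} certificate: expand $\atheta_0\p_m$ and $\p_m\atheta_0$ into noncommutative monomials via the connected-tree description and build a sign-reversing matching on their difference, using (a)--(c) to cancel the generic terms and the wrap-around relation (d) (with the periodicity (e)) to cancel precisely the terms that would otherwise escape to $-\infty$. This is the affine counterpart of the M\'esz\'aros--Panova--Postnikov combinatorics \cite{MPP14}; the genuinely new ingredient is the boundary bookkeeping, and Remark~\ref{pinf} shows that relation (d) is indispensable there.

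Granting the first stage, the second is routine. Then $\mathcal C$ is commutative and generated by the $\atheta_i$ and the $\p_m$; by Theorem~\ref{sym0} (with Newton's identities, which apply since $\mathcal C$ is now commutative) the relations $e_j(\atheta_0,\dots,\atheta_{n-1})=0$ hold for all $j>0$, so there is a surjective $\mathbb{Q}$-algebra map $\alpha\colon R_n\twoheadrightarrow\mathcal C$ with $x_i\mapsto\atheta_i$ and $p_m\mapsto\p_m$. Composing with the surjection $D\colon\mathcal C\twoheadrightarrow\mathcal R$ and the isomorphism $\psi\colon\mathcal R\xrightarrow{\sim}R_n$ of Theorem~\ref{main2} (under which $D_{\atheta_i}\leftrightarrow x_i$ and $D_{\p_m}\leftrightarrow p_m$), the endomorphism $\psi\circ D\circ\alpha$ of $R_n$ fixes every generator $x_i,p_m$ and is hence the identity. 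So $\alpha$ is injective, thus an isomorphism; then $\psi\circ D=\alpha^{-1}$, and $D=\psi^{-1}\circ\alpha^{-1}$ is an isomorphism $\mathcal C\xrightarrow{\sim}\mathcal R\cong H^*(\fl)$. All the geometric input is already contained in Theorem~\ref{main2}, so this stage poses no difficulty.
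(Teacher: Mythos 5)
The statement you are asked to prove is labelled a \emph{conjecture} in the paper: the author gives no proof, only the heuristic of Remark \ref{pinf} together with an explicit admission that the commutativity ``does not follow if we do not use the relations (d)--(e)'' and that avoiding a case-by-case argument seems hard. Your proposal does not close this gap either. Your first stage contains genuine and correct partial progress that goes beyond what the paper records: the translation automorphism $\tau_c$ combined with Theorem \ref{p+} and Theorem \ref{comm} does reduce Dunkl--MN commutativity to the single identity $[\atheta_0,\p_m]=0$ in $\afk$, and that reduction is sound. But the vanishing of that one commutator is then argued only by the telescoping $\p_m=\sum_{i\le 0}\atheta_i^m$, which is exactly the heuristic of Remark \ref{pinf}, and you yourself concede that ``almost all of the work'' lies in legitimizing the infinite cancellation --- either by showing the ideal generated by (a)--(e) is closed in the inverse-limit topology, or by constructing a finite sign-reversing matching. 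Neither route is executed, so the combinatorial core of the conjecture remains open in your write-up. The same applies, a fortiori, to MN--MN commutativity, which you dismiss as ``following by the same method'' even though the corresponding telescoping would be doubly infinite and at least as delicate.

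Your second stage is correct: granting commutativity, the surjection $\alpha\colon R_n\twoheadrightarrow\mathcal C$ (well-defined by Theorem \ref{sym0} and Newton's identities over $\mathbb{Q}$), composed with $D$ and the identification $\mathcal R\cong R_n$ of Theorem \ref{main2}, fixes all generators and hence forces $\alpha$ and $D$ to be isomorphisms. This is the routine half, and the paper treats it as such (``This implies that the map $D$ \dots is an isomorphism''). So the honest summary is: you have correctly isolated and simplified the open problem, but you have not solved it, and what you present should be framed as a reduction plus a conjecture rather than as a proof.
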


A strong evidence is that the MN element $\p_m(i)$ should be considered as $\sum_{j=-\infty}^i \atheta_j^m$ secretly (see Remark \ref{pinf}). The infinite summation $\sum_{j=-\infty}^i \atheta_j^m$ does not make sense in $\afk$ but make sense in $\mathcal{A}'$, since the relations (a)-(c) are compatible with the topology of $B$ (Definition \ref{fk}) but relations (d)-(e) are not compatible. If there is a way to make the expression $\sum_{j=-\infty}^i \atheta_j^m$ valid while keeping the commutativity between Dunkl elements $\atheta_i$, Conjecture \ref{com} automatically follows.\\

Without the above argument, it is hard to avoid a case-by-case proof for proving the commutativity of those elements from the relations (a)-(e). Note that Berg et. al. showed that BSS operators commute in \cite{BSS14} but their result relies heavily on the affine insertion \cite{LLMS10} which involves complicated case-by-case algorithm and proofs. \\

Let $\asp_w^{\afk}$ be the \emph{affine Schubert element} in $\mathcal{C}$ corresponding to the Schubert class $\xi^w$, assuming Conjecture \ref{com}. Then one can generalize the nonnegativity conjecture in \cite{FK99}.
\begin{conj}[Nonnegativity conjecture for the affine Schubert elements]\label{nonn}
For $w \in \sn$, the affine Schubert element $\asp_w^{\afk}$ can be written as a nonnegative linear combination of noncommutative monomials in $\afk$. Moreover, there exists a linear combination of noncommutative monomials with nonnegative integer coefficients.
\end{conj}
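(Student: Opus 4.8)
The plan is to realize the affine Schubert element through a manifestly positive generating-function identity in (a completion of) $\afk$, generalizing the Cauchy / Yang-Baxter constructions of Fomin-Kirillov and Fomin-Stanley for $\fk$. First I would fix Conjecture \ref{com}, so that $\asp_w^{\afk}\in\mathcal{C}\subset\afk$ is a genuine lift of the Schubert class $\xi^w$; before that is available one works with the image in $\mathcal{R}\cong H^*(\fl)$ and seeks a positive preimage, which is the literal content of the statement. Then I would record the two base cases that are essentially in hand already: for $w\in S_n$, $\asp_w^{\afk}$ is the Schubert polynomial regarded as an element of $\fk$, and for a $0$-Grassmannian $w$ it is built from the MN elements $\p_m$, for which the $k$-strong-ribbon-tableau formula of Section 11, together with $p_m=\sum_{i=0}^{m-1}(-1)^i s_{J_{i,m}}$, should be reorganized so that the signs in Definition \ref{p} cancel. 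The reduction $\asp_w=\partial_{v^{-1}}\f_{wv}$ with $wv$ $0$-Grassmannian lifts to $\asp_w^{\afk}=\Delta_{v^{-1}}\asp_{wv}^{\afk}$, which suggests splitting the problem into (i) nonnegativity of $\asp_u^{\afk}$ for $0$-Grassmannian $u$, and (ii) showing that the compositions of divided difference operators $\Delta_i$ that occur preserve this property.

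The core step should be an affine Cauchy identity. In $\fk$ one expands a product $\prod_i\bigl(1+x\,[a_i b_i]\bigr)$ along a wiring diagram and reads off $\mathfrak{S}_w$ with $0/1$ coefficients (the RC-graph / pipe-dream model). I would look for the analogous identity in $\afk$ adjoined with $x_0,\dots,x_{n-1}$ modulo $e_j=0$: a product, ordered along an affine wiring diagram and incorporating the MN generators in the ``loop'' directions, whose coefficient of the appropriate monomial in the $x_i$ is $\asp_w^{\afk}$. Verifying that this coefficient obeys the recursion of Definition \ref{asp} is then done by commuting $\Delta_i$ past the factors using Lemma \ref{dist}; the relations (a)-(e), and the fact (checked in Section 7) that $\Delta_i$ stabilizes the defining ideal of $\afk$, make this step formal, and the integer-coefficient clause is automatic, since the factors are polynomials in the $[ij]$ and $\p_m$ over $\mathbb{Z}$.

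The main obstacle is twofold. First, even in the finite case the positivity of the Schubert polynomial in $\fk$ is open, so a complete proof must subsume it; the concrete difficulty is that $\Delta_{ij}$ obeys a \emph{twisted} Leibniz rule with $t_{ij}$, and $t_{ij}$ can send $[ab]$ to $[ba]=-[ab]$, so a single $\Delta_i$ need not preserve positivity --- the cancellations are forced and must be organized by a combinatorial model, which is precisely what RC-graphs accomplish classically. Second, and genuinely new in the affine setting, the Grassmannian directions are governed by the $\p_m$, which do not lie in the subalgebra generated by the Dunkl elements and are themselves defined (Definition \ref{p}) by a \emph{signed} sum $\sum(-1)^{c(D)-1}x_{D_L}$ over labelled connected trees; producing a positive monomial expansion for products $\p_{\lambda_1}\p_{\lambda_2}\cdots$ --- equivalently, for $\asp_u^{\afk}$ with $u$ $0$-Grassmannian --- will require a new sign-reversing involution on labelled connected trees, and since the Bruhat action of $\afk$ is not faithful one cannot simply transport positivity from the operator side. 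I therefore expect step (i), the $0$-Grassmannian case, to be the crux, with the general $\asp_w^{\afk}$ following from a positivity-preserving refinement of the $\Delta_{v^{-1}}$ reduction in (ii).
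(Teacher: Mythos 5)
The statement you are addressing is stated in the paper as a conjecture and is left open there: the paper supplies no proof, only the observations that it would subsume the Fomin--Kirillov nonnegativity conjecture for Schubert polynomials in $\fk$ (itself open) and would yield a combinatorial rule for the structure constants $p^w_{v,u}$. So there is no ``paper's proof'' to compare against, and what you have written is a research program rather than a proof. You are candid about this --- you note that a complete argument must subsume the open finite-type case --- but every load-bearing step of your plan is missing. Concretely: (1) the ``affine Cauchy identity,'' a product over an affine wiring diagram whose relevant coefficients are the $\asp_w^{\afk}$ with manifestly nonnegative coefficients, is only posited, not constructed; its existence is essentially equivalent to the conjecture, so invoking it is circular as a proof strategy. (2) The sign-reversing involution needed to cancel the $(-1)^{c(D)-1}$ signs of Definition~\ref{p} in products $\p_{\lambda_1}\p_{\lambda_2}\cdots$ is not exhibited, and this is the genuinely new affine difficulty, since the $\p_m$ do not lie in the Dunkl subalgebra. (3) The claim that the composition $\Delta_{v^{-1}}$ can be organized to preserve positivity is not established, and as you yourself observe it fails term-by-term because of the twisted Leibniz rule and the convention $[ba]=-[ab]$.

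On the positive side, your structural reductions are sound and consistent with the paper: the two base cases ($w\in S_n$ giving the ordinary Schubert polynomial in $\fk$, and $0$-Grassmannian $w$ governed by the MN elements), the lift $\asp_w^{\afk}=\Delta_{v^{-1}}\asp_{wv}^{\afk}$, and the dependence on Conjecture~\ref{com} to make $\asp_w^{\afk}$ well defined in $\mathcal{C}$ rather than merely in $\mathcal{R}$. Your remark that the Bruhat action of $\afk$ is not faithful, so that positivity of the operator $D_{\asp_w^{\afk}}$ cannot simply be transported back to an element of $\afk$, is a real obstruction worth recording. But identifying the obstacles is not overcoming them: as it stands the proposal proves nothing beyond what the paper already asserts, and the conjecture remains open.
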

Conjecture \ref{nonn} not only implies the nonnegativity conjecture in \cite{FK99}, but also implies the nonnegativity conjecture for the affine Schur elements and for the affine Stanley elements in $\mathcal{C}$ corresponding to affine Schur functions and affine Stanley symmetric functions respectively. Indeed, for $0$-Grassmannian element $w$ the affine Schubert element is the affine Schur element, and it is known that the affine Stanley symmetric functions are nonnegative linear combination of affine Schur functions \cite{Lam11}.\\

A combinatorially nonnegative formula for the affine Schubert element would provide a combinatorial rule for the structure constants of the affine Schubert polynomials, as well as Littlewood-Richardson coefficients for the flag variety. Assume that Conjecture \ref{nonn} is true for $u \in \sn$. Then $\asp_u^{\afk}$ can be written as $\sum a_\x \x$ where $\x$'s are noncommutative monomials and $a_\x\geq 0$. Since Bruhat operators and cap operators can be identified, we have
$$\sum_{\x} a_\x (A_w\cdot \x)= D_{\asp_u^{\afk}}(A_w)=D_u(A_w)= \sum_{v\in\sn} p^w_{v,u} A_v.$$
for $w\in \sn$. Then the structure coefficient $p^w_{v,u}$ is the same as the sum of $a_\x$ where $A_w \cdot \x =A_v$ which is nonnegative by Conjecture \ref{nonn}.

\section{Murnaghan-Nakayama rule for the affine flag variety}
In this section, we define a $k$-strong-ribbon and use them to combinatorially state the Murnaghan-Nakayama rule (MN rule in short) for the affine flag variety and the affine Stanley symmetric functions. Strong strips appearing in this section are strong strips with respect to $0$.\\

For $w,u\in \sn$ with $w>u$, consider a chain of marked strong covers from $w$ to $u$
$$ S=\big(w=w_0 \overset{(x_1,y_1)}\longrightarrow w_1 \overset{(x_2,y_2)}\longrightarrow \cdots \overset{(x_m,y_m)}\longrightarrow w_m=u\big).$$
Let $l$ be the index of the chain defined by $\left((x_1,y_1),(x_2,y_2),\ldots,(x_m,y_m)\right)$ and $\x_l$ be the corresponding element $[x_1,y_1][x_2,y_2]\cdots[x_m,y_m]$ in the affine FK algebra. We denote the chain by $w\overset{l}\leadsto u$.\\

For $w,u\in \sn$ with $w>u$, the chain $S=(w\overset{l}\leadsto u)$ is called a \emph{$k$-strong-ribbon} when $\x_l$ appears as a term in the definition of $\p_m$, i.e., there exists a connected tree $D$ on $\mathcal{D}_0$ and a labeling $L\in M(D)$ such that $\x_{l}=\x_{D_L}$. In this setup, we define the \emph{sign} $\sigma(D)$ of the $k$-strong-ribbon by $(-1)^{c(D)-1}$. Since $\x_{D_L}$ uniquely determines the diagram $D$, we use $\sigma(l)$ instead of $\sigma(D)$. We let inside$(S)=w$, outside$(S)=u$\\
For $w,v \in \sn$ and $m<n$, let $c^w_{m,v}$ be the number $\sum\sigma(l)$ where the sum runs over all $k$-strong-ribbons $(w\overset{l}\leadsto u)$. One can show from the definition that $D_{\p_m}(A_w)=\sum_u c^w_{m,u} A_u$. \\

Recall that $\xi^w$ is the Schubert class for $w$ in the cohomology of the affine flag variety and $\xi(m)=\sum_{i=0}^m (-1)^i \xi^{\rho_{i,m}}$. Note that $\xi(m)$ maps to $p_m$ via the map $pr_1^*: H^*(\fl)\rightarrow H^*(Gr)\cong \Lambda^{(k)}$. \\

\begin{thm}\label{mnaffine}
For $v\in \sn$, we have
$$\xi(m)\xi^v=\sum_{w\in\sn} c^w_{m,v} \xi^w=\sum \sigma(l)\xi^w$$
where the second sum runs over all $k$-strong-ribbons $(w\overset{l}\leadsto u)$ from $w$ to $u$.
\end{thm}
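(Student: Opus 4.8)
The plan is to deduce the identity from Theorem \ref{main} by reading off the $D_{\p_m}$-expansion in the Schubert basis and then dualizing. First I would recall that the cap operator $D_u$ is, by the discussion in Section \ref{cap}, the adjoint (with respect to the Schubert pairing $\xi^w \leftrightarrow A_w$) of left multiplication by $\xi^u$ on $H^*(\fl)$: concretely, for any operator $D$ on $\mathbb{A}$ that lies in $\mathcal{R}$ and corresponds to an element $\eta \in H^*(\fl)$ in the sense that $D(A_w) = \sum_v \langle \eta \cdot \xi^v, A_w\rangle A_v$ — equivalently $\phi_{\id,*}(D) = \eta$ after identifying $\homq \cong H^*(\fl)$ — one has the coefficient extraction $\langle \eta\, \xi^v, A_w\rangle = $ coefficient of $A_v$ in $D(A_w)$. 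By Theorem \ref{main}, $D_{\p_m}$ corresponds under this identification to $\xi(m) = \sum_{i=0}^{m-1}(-1)^i \xi^{\rho_{i,m}}$. Thus the coefficient of $A_v$ in $D_{\p_m}(A_w)$ equals the coefficient of $\xi^w$ in the product $\xi(m)\,\xi^v$.

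Next I would combine this with the combinatorial description of $D_{\p_m}(A_w)$ already established in this section, namely $D_{\p_m}(A_w) = \sum_u c^w_{m,u} A_u$ where $c^w_{m,u} = \sum_l \sigma(l)$ runs over all $k$-strong-ribbons $(w \overset{l}\leadsto u)$. This was obtained by expanding $\p_m$ into its noncommutative monomials $x_{D_L}$ with signs $(-1)^{c(D)-1}$ (Definition \ref{p}) and noting that the Bruhat action $A_w \cdot x_{D_L}$ produces $A_u$ precisely when the sequence of transpositions realizes a marked strong chain whose index $l$ satisfies $\x_l = \x_{D_L}$ — which is exactly the definition of a $k$-strong-ribbon. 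Setting $u = v$ and matching coefficients gives
\begin{equation*}
\text{coeff of } \xi^w \text{ in } \xi(m)\,\xi^v \;=\; c^w_{m,v} \;=\; \sum_{(w\overset{l}\leadsto v)} \sigma(l).
\end{equation*}
Summing over $w \in \sn$ against the Schubert basis yields both displayed equalities in the theorem, since $\{\xi^w\}$ is a basis of $H^*(\fl)$ and $\xi(m)\,\xi^v = \sum_w \big(\text{coeff of } \xi^w\big)\,\xi^w$.

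The step requiring the most care is the duality bookkeeping: one must verify cleanly that "cap operator $D_u$ corresponds to the Schubert class $\xi^u$" means exactly that the matrix of $D_u$ in the $A$-basis is the transpose of the matrix of $\xi^u\cdot(-)$ in the $\xi$-basis, so that passing from the $D_{\p_m}(A_w)$-expansion to the $\xi(m)\xi^v$-expansion is a genuine transpose and not off by an index swap. This is essentially the content of Kostant–Kumar's identification of $p^w_{u,v}$ with the cohomology structure constants together with $\xi(m) = \phi_{\id,*}(D_{\p_m})$ from Theorem \ref{main}; once that is pinned down the argument is formal. A secondary point to state explicitly is that $\p_m$ does lie in $\mathcal{E}$ and the expansion $D_{\p_m}(A_w) = \sum_u c^w_{m,u}A_u$ is finite for each $w$, which follows from Corollary \ref{valid} as already noted after Definition \ref{p}.
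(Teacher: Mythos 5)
Your proposal is correct and follows essentially the same route as the paper: the paper's proof is precisely the adjointness computation $\langle\xi(m)\cup\xi^v,\xi_w\rangle=\langle\xi^v,D_{\p_m}(\xi_w)\rangle=c^w_{m,v}$, i.e.\ reading the coefficient of $\xi^w$ in $\xi(m)\xi^v$ off the $A_v$-coefficient of $D_{\p_m}(A_w)$ via Theorem \ref{main} and the combinatorial expansion $D_{\p_m}(A_w)=\sum_u c^w_{m,u}A_u$. The ``duality bookkeeping'' you flag as the delicate step is exactly the content of Kostant--Kumar's identification of $p^w_{u,v}$ with the cohomology structure constants, as you note, and is handled identically in the paper.
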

\begin{proof} For $u\in\sn$, recall that $\xi_u$ is the Schubert class for $u$ in the homology of the affine flag variety and let $\langle \cdot,\cdot \rangle$ be the pairing between the cohomology and homology of the affine flag variety. Then we have
$$\langle\xi(m)\cup \xi^v,\xi_w\rangle=\langle\xi^v,D_{\p_m}(\xi_w)\rangle=c^w_{m,v}.$$
\end{proof}

One can also obtain the MN rule for the affine Stanley symmetric functions. Lam showed that the Stanley symmetric function $\tilde{F}_w$ is the pullback $pr_1^*(\xi^w)$. By applying the pullback $pr_1^*$ to both sides of Theorem \ref{mnaffine}, we have the following MN rule.

\begin{cor}\label{mnstanley}
$$p_m\tilde{F}_v=\sum_{w\in\sn} c^w_{m,v}\tilde{F}_w=\sum \sigma(l) \tilde{F}_w$$
where the second sum runs over all $k$-strong-ribbons $(w\overset{l}\leadsto v)$ from $w$ to $v$.
\end{cor}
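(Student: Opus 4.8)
The plan is to deduce Corollary~\ref{mnstanley} from Theorem~\ref{mnaffine} by applying the algebra map $p_1^*\colon H^*(\fl)\to H^*(\gr)\cong\Lambda^{(k)}$ to the identity $\xi(m)\,\xi^v=\sum_{w}c^w_{m,v}\xi^w$. The only facts needed are: (i) $p_1^*$ is a ring homomorphism, so it respects the cup product on the left-hand side; (ii) $p_1^*(\xi^w)=\tilde F_w$ for all $w\in\sn$, which is Lam's result recalled in Section~\ref{affine}; and (iii) $p_1^*(\xi(m))=p_m$ in $\Lambda^{(k)}$, which is stated just before Theorem~\ref{mnaffine} (and follows from the identification $D_{\p_m}\leftrightarrow\xi(m)\leftrightarrow p_m$ under $\mathcal{R}_{\gr}\cong\Lambda^{(k)}$, together with $p_m=\sum_{i=0}^{m-1}(-1)^i s_{(m-i,1^i)}$ and Theorem~\ref{main}). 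The coefficients $c^w_{m,v}$ are rational numbers independent of the cohomology ring in which we work, so they pass through $p_1^*$ unchanged.

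Concretely, I would first apply $p_1^*$ to both sides of the first equality in Theorem~\ref{mnaffine}:
\begin{align*}
p_1^*\bigl(\xi(m)\,\xi^v\bigr) &= p_1^*(\xi(m))\,p_1^*(\xi^v) = p_m\,\tilde F_v,\\
p_1^*\Bigl(\sum_{w\in\sn} c^w_{m,v}\,\xi^w\Bigr) &= \sum_{w\in\sn} c^w_{m,v}\,\tilde F_w.
\end{align*}
Equating the two gives $p_m\tilde F_v=\sum_{w\in\sn}c^w_{m,v}\tilde F_w$. For the second equality, I would simply unwind the definition of $c^w_{m,v}=\sum\sigma(l)$, where the sum is over $k$-strong-ribbons $(w\overset{l}\leadsto v)$, to rewrite $\sum_{w}c^w_{m,v}\tilde F_w=\sum\sigma(l)\tilde F_w$ with the sum over all such ribbons; this is purely a bookkeeping rephrasing, exactly as in the statement of Theorem~\ref{mnaffine}.

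There is essentially no hard step here: the corollary is a formal pushforward/pullback consequence of the already-proved Theorem~\ref{mnaffine}. The one point deserving a sentence of care is the well-definedness of applying $p_1^*$ — i.e.\ that the infinite sum $\sum_{w\in\sn}c^w_{m,v}\xi^w$ is a locally finite sum (only finitely many $w$ of each fixed length contribute, since $c^w_{m,v}=0$ unless $\ell(w)=\ell(v)+m$ and $w\geq v$), so that applying the linear map $p_1^*$ term by term is legitimate; this is immediate from $D_{\p_m}(A_w)=\sum_u c^w_{m,u}A_u$ being a valid (finite) action, as noted before the theorem. With that remark in place the proof is just the two displayed lines above followed by the cosmetic rewriting of the right-hand sum.
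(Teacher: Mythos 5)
Your proof is correct and is essentially identical to the paper's: the paper also obtains the corollary by applying the pullback $p_1^*$ to both sides of Theorem~\ref{mnaffine}, using $p_1^*(\xi^w)=\tilde F_w$ and $p_1^*(\xi(m))=p_m$. Your extra remark on local finiteness is a harmless refinement the paper does not spell out.
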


\begin{example}
Consider the identity $\f_{10}p_3=\f_{12310}-\f_{20310}+\f_{03210}$. Each term can be computed from the Bruhat actions of the following terms in $\p_m(0)$.
\begin{align*}
s_1s_2s_3s_1s_0\cdot [-2,1][-4,1][-1,1] &= s_1s_0\\
s_2s_0s_3s_1s_0\cdot [-4,1][-1,2][-1,1] &= s_1s_0\\
s_0s_3s_2s_1s_0\cdot [0,6][0,5][0,3] &= s_1s_0.
\end{align*}
and the corresponding $c(l)$, the number of non-positive integers appearing in the support, are $3,2,1$ respectively and the corresponding signs $\sigma(l)$ are $1,-1,1$ respectively. 

\end{example}

\section{Concluding remark}

The connection between MN operators and BSS operators enables us to make a marking-free definition of strong strips and $k$-Schur functions. Indeed, since the MN element $\p_m(0)$ as an action on $\mathbb{B}$ is the same as $\overline{p_m}$ and $D_{\rho_{0,m}}$ is the same as $\overline{h_m}$, one can deduce the relation between $D_{\p_m}$ and $D_{\rho_{0,m}}$ from the relation between $p_m$ and $h_m$ in the ring of symmetric functions. Note that the formula for $D_{\rho_{0,m}}$ as an element in the affine FK algebra is much more complicated than the definition of $\p_m$. Indeed, this situation also happens in the finite Fomin-Kirillov algebra. The fact that  $\p_m$ has fewer terms than other elements of the same degree makes computation substantially simpler. \\

One can obtain a formula of $k$-Schur functions in terms of power sum symmetric functions, from the MN rule for the affine Stanley symmetric functions. There are several definitions of $k$-Schur functions but in this paper the $k$-Schur functions are defined in terms of strong strips \cite{LLMS10}. It is proved in \cite{LLMS10} that the $k$-Schur function for $\lambda$ is dual to the affine Stanley symmetric functions indexed by $w(\lambda)$, where $w(\lambda)$ is the $0$-Grassmannian element corresponding to the $k$-bounded partition $\lambda$.  \\

A \emph{$k$-strong-ribbon tableau} is a sequence $T=(S_1,S_2,\ldots)$ of $k$-strong-ribbons $S_i$ such that outside$(S_j)=$ inside$(S_{j+1})$ for all $j \in \mathbb{Z}_{>0}$ and size$(S_i)=0$ for all sufficiently large $i$. We define inside$(T)=$ inside$(S_1)$ and outside$(T)=$ outside$(S_i)$ for $i$ large. The \emph{weight} $wt(T)$ of $T$ is the sequence 
\[wt(T)=(\text{size}(S_1),\text{size}(S_2),\ldots).\]
Let $\sigma(T)$ be the product of $\sigma(l_i)$ for all $i$ where $l_i$ is the index of $S_i$. \\

By applying Theorem \ref{mnstanley} repeatedly from $v=id$, one can deduce
$$p_\lambda=\sum_{v,T} \sigma(T)\tilde{F}_v$$
where the sum runs over all $k$-strong-ribbon tableaux $T$ starting from $v$ to $id$ with weight $\lambda$. By dualizing the equality, we can get the formula for $k$-Schur functions in terms of $p_m$.\\
Consider $\langle s^{(k)}_u, p_\lambda \rangle$ where $\langle \cdot, \cdot \rangle$ is the Hall inner product on $\Lambda_{(k)}\times \Lambda^{(k)}$. We have
\begin{align*}
\langle s^{(k)}_u, p_\lambda \rangle&=\langle s^{(k)}_u, \sum_T \sigma(T)\tilde{F}_v \rangle=\sum \sigma(T).
\end{align*}
where the last sum runs over all $k$-strong-ribbon tableaux $T$ from $u$ to $id$ of weight $\lambda$.
By using the identity $\langle p_\lambda, p_\mu \rangle=\delta_{\lambda\mu}z_\lambda$ where $z_\lambda=prod_i \alpha_i ! i^{\alpha_i}$ for $\lambda=(1^{\alpha_1},2^{\alpha_2},\ldots)$, we have

\begin{thm}\label{kschur}
$$s^{(k)}_u=\sum_{\lambda,T} {\sigma(T)\over z_\lambda} p_\lambda$$
where the sum runs over all $k$-strong-ribbon tableaux $T$ from $u$ to $id$ of weight $\lambda$.
\end{thm}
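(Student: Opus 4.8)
The plan is to assemble the statement from the Murnaghan--Nakayama rule for affine Stanley symmetric functions (Corollary \ref{mnstanley}) together with the duality between affine Schur functions and $k$-Schur functions, following the computation sketched just before the theorem. Everything takes place inside $\Lambda_{(k)}$ and $\Lambda^{(k)}$, paired by the Hall inner product which restricts to $\langle \tilde{F}_\lambda, s^{(k)}_\mu\rangle=\delta_{\lambda\mu}$ for $k$-bounded partitions $\lambda,\mu$.

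First I would iterate Corollary \ref{mnstanley}. Fix a $k$-bounded partition $\lambda=(\lambda_1,\ldots,\lambda_\ell)$ and start from $\tilde{F}_{\mathrm{id}}=1$; applying multiplication by $p_{\lambda_\ell}$, then $p_{\lambda_{\ell-1}}$, and so on up to $p_{\lambda_1}$, each step replaces a term $\tilde{F}_v$ by $\sum_w\sigma(l)\tilde{F}_w$, the sum over $k$-strong-ribbons $w\overset{l}\leadsto v$ of the appropriate size. Concatenating the resulting chains of marked strong covers and multiplying the signs produces exactly the $k$-strong-ribbon tableaux $T=(S_1,S_2,\ldots)$ with $\mathrm{outside}(T)=\mathrm{id}$, $\mathrm{inside}(T)=v$ and $wt(T)=\lambda$, each carrying the total sign $\sigma(T)=\prod_i\sigma(l_i)$. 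This yields $p_\lambda=\sum_{v,T}\sigma(T)\tilde{F}_v$, the sum over all $k$-strong-ribbon tableaux from $v$ to $\mathrm{id}$ of weight $\lambda$. Here I must check that only $0$-Grassmannian elements $v\in\sn$ occur, so that each $\tilde{F}_v$ is genuinely an affine Schur function: this holds because $p_\lambda$ with $\lambda_1\le k$ lies in $\Lambda^{(k)}$ and because the terms of a $k$-strong-ribbon lying over a $0$-Grassmannian element are again indexed by $0$-Grassmannian elements.

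Next I would pair both sides with the $k$-Schur function $s^{(k)}_u$. By the duality $\langle \tilde{F}_v,s^{(k)}_u\rangle=\delta_{uv}$ for $0$-Grassmannian $u,v$, only the tableaux with $\mathrm{inside}(T)=u$ survive, so $\langle s^{(k)}_u,p_\lambda\rangle=\sum_T\sigma(T)$, the sum over $k$-strong-ribbon tableaux from $u$ to $\mathrm{id}$ of weight $\lambda$. Finally, since $I_k$ is the $\mathbb{Q}$-span of the $p_\mu$ with $\mu_1>k$, the elements $p_\lambda$ with $\lambda$ $k$-bounded form a basis of $\Lambda_{(k)}$ which is orthogonal for the Hall pairing with $\Lambda^{(k)}$ and satisfies $\langle p_\lambda,p_\lambda\rangle=z_\lambda$. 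Writing $s^{(k)}_u=\sum_\lambda c_\lambda p_\lambda$ in this basis and pairing with $p_\mu$ gives $c_\mu=\langle s^{(k)}_u,p_\mu\rangle/z_\mu$; substituting the previous formula yields $s^{(k)}_u=\sum_{\lambda,T}\tfrac{\sigma(T)}{z_\lambda}p_\lambda$, which is the claim.

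I expect the only genuine difficulty to be in the first step: making the iteration of Corollary \ref{mnstanley} precise, i.e. checking that the concatenation of marked strong covers matches the combinatorial definition of a $k$-strong-ribbon tableau — the gluing condition $\mathrm{outside}(S_j)=\mathrm{inside}(S_{j+1})$ and the multiplicativity $\sigma(T)=\prod_i\sigma(l_i)$ — and verifying that non-$0$-Grassmannian elements never enter, so that the duality with $k$-Schur functions can be applied termwise. Once this bookkeeping is settled, the duality statement and the orthogonality of the power sums are routine.
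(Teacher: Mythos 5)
Your proposal follows the paper's own argument essentially verbatim: iterate the Murnaghan--Nakayama rule for affine Stanley symmetric functions (Corollary \ref{mnstanley}) starting from $\tilde{F}_{id}=1$ to obtain $p_\lambda=\sum_{v,T}\sigma(T)\tilde{F}_v$, pair with $s^{(k)}_u$, and extract coefficients via $\langle p_\lambda,p_\mu\rangle=\delta_{\lambda\mu}z_\lambda$. The one point where you are more careful than the paper --- ruling out contributions from non-$0$-Grassmannian $v$ --- is slightly misstated (an individual $k$-strong-ribbon sitting over a $0$-Grassmannian element need not have $0$-Grassmannian inside), but the fact you need does hold at the level of the signed sums: $\sum_T\sigma(T)$ over tableaux from $v$ to $id$ of weight $\lambda$ equals the coefficient of $\xi^v$ in $\prod_i\xi(\lambda_i)$, a product of linear combinations of $0$-Grassmannian Schubert classes, which lies in the span of $0$-Grassmannian classes and hence vanishes for non-$0$-Grassmannian $v$.
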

It would be interesting to generalize Theorem \ref{kschur} to define $t$-dependent $k$-Schur functions by introducing $t$-dependent $k$-strong-ribbon tableaux.\\

 Note that the definition of $k$-Schur functions using strong strips depends on markings, which was one of the major obstacles to defining strong strips as elements in the affine FK algebra. Recall that the marking of the strong cover $(u\overset{(i,j)}\longrightarrow w)$ is defined by $w(j)$. On element $[ij]$ does not encode the marking $w(j)$ since the element $[ij]$ is defined independent of $w$. When we take Theorem \ref{kschur} as the definition of the $k$-Schur functions, it is easier to prove that the $k$-Schur function is symmetric compared to the definition of $k$-Schur functions in terms of strong strips. However, it is not clear from Theorem \ref{kschur} that the $k$-Schur functions are monomial positive. \\

Since the $k$-Schur function $s^{(k)}_u$ is known to be Schur-positive \cite{LLMS13}, there exists a $S_n$-representation whose Frobenius image is $s^{(k)}_u$. By Theorem \ref{kschur}, the characters of this representation is $\sum \sigma(T)$ where $T$ runs over all $k$-strong-ribbon tableaux $T$ from $u$ to $id$ of weight $\lambda$. Note that Chen and Haiman \cite{CH08} conjecturally constructed $t$-graded $S_n$-representation whose Frobenius image is the involution of the $t$-dependent $k$-Schur function $\omega s^{(k)}_u[X;t]$, where the involution $\omega$ acting on $\Lambda$ by sending $s_\lambda$ to $s_{\lambda'}$. It would be interesting if the combinatorics studied by Chen and Haiman is related to the $k$-strong-ribbon tableaux in this paper.

\section*{Acknowledgments}
I thank Thomas Lam, Leonardo Mihalcea, Mark Shimozono for helpful discussions. I also thank an anonymous referee for helpful comments.

\end{document}